\title[Proof of the $1$-factorization \& Hamilton decomposition conjectures IV]{Proof of the $1$-factorization and Hamilton
decomposition conjectures IV: exceptional systems for the two cliques case}
\author{Daniela K\"uhn, Allan Lo and Deryk Osthus}
\thanks {The research leading to these results was partially supported by the  European Research Council
under the European Union's Seventh Framework Programme (FP/2007--2013) / ERC Grant
Agreement n. 258345 (D.~K\"uhn and A.~Lo) and 306349 (D.~Osthus).
The research was also partially supported by the EPSRC, grant no. EP/J008087/1 (D.~K\"uhn and D.~Osthus).
}
\date{\today}
\def\COMMENT#1{}
\def\TASK#1{}
\begin{document}
\numberwithin{equation}{section}
\def\noproof{{\unskip\nobreak\hfill\penalty50\hskip2em\hbox{}\nobreak\hfill%
        $\square$\parfillskip=0pt\finalhyphendemerits=0\par}\goodbreak}
\def\endproof{\noproof\bigskip}
\newdimen\margin   
\def\textno#1&#2\par{%
    \margin=\hsize
    \advance\margin by -4\parindent
           \setbox1=\hbox{\sl#1}%
    \ifdim\wd1 < \margin
       $$\box1\eqno#2$$%
    \else
       \bigbreak
       \hbox to \hsize{\indent$\vcenter{\advance\hsize by -3\parindent
       \sl\noindent#1}\hfil#2$}%
       \bigbreak
    \fi}
\def\proof{\removelastskip\penalty55\medskip\noindent{\bf Proof. }}

\def\C{\mathcal{C}}

\def\prob{\mathbb{P}}

\def\dgreen{\lambda_1}
\def\dblue{\lambda_{\rm EF}}
\def\epszero{\eps_0}
\def\d0{d_0}
\def\epsdecom{\widetilde{\epsilon}_{1}}
\def\eps{{\varepsilon}}
\newcommand{\ex}{\mathbb{E}}
\newcommand{\pr}{\mathbb{P}}
\newcommand{\cB}{\mathcal{B}}
\newcommand{\cS}{\mathcal{S}}
\newcommand{\cF}{\mathcal{F}}
\newcommand{\cC}{\mathcal{C}}
\newcommand{\cP}{\mathcal{P}}
\newcommand{\cQ}{\mathcal{Q}}
\newcommand{\cR}{\mathcal{R}}
\newcommand{\cK}{\mathcal{K}}
\newcommand{\cD}{\mathcal{D}}
\newcommand{\cI}{\mathcal{I}}
\newcommand{\cV}{\mathcal{V}}
\newcommand{\cT}{\mathcal{T}}
\newcommand{\eul}{{\rm e}}

\newcommand{\cJ}{\mathcal{J}}

\newtheorem{firstthm}{Proposition}[section]
\newtheorem{thm}[firstthm]{Theorem}
\newtheorem{prop}[firstthm]{Proposition}
\newtheorem{fact}[firstthm]{Fact}
\newtheorem{lemma}[firstthm]{Lemma}
\newtheorem{cor}[firstthm]{Corollary}
\newtheorem{problem}[firstthm]{Problem}
\newtheorem{defin}[firstthm]{Definition}
\newtheorem{conj}[firstthm]{Conjecture}
\newtheorem{conjecture}[firstthm]{Conjecture}
\newtheorem{claim}[firstthm]{Claim}
\newtheorem{remark}[firstthm]{Remark}

\begin{abstract}
In a sequence of four papers, we prove the following results (via a unified approach) for all sufficiently large $n$:
\begin{itemize}
\item[(i)] [\emph{$1$-factorization conjecture}]
Suppose that $n$ is even and $D\geq 2\lceil n/4\rceil -1$. 
Then every $D$-regular graph $G$ on $n$ vertices has a decomposition into perfect matchings.
Equivalently, $\chi'(G)=D$.

\item[(ii)] [\emph{Hamilton decomposition conjecture}]
Suppose that $D \ge   \lfloor n/2 \rfloor $.
Then every $D$-regular graph $G$ on $n$ vertices has a decomposition
into Hamilton cycles and at most one perfect matching.

\item[(iii)] We prove an optimal result on the number of edge-disjoint Hamilton cycles in a graph 
of given minimum degree.
\end{itemize}
According to Dirac, (i) was first raised in the 1950s.
(ii) and (iii)
answer questions of Nash-Williams from 1970.
The above bounds are best possible. In the current paper, we prove results on the decomposition 
of sparse graphs into path systems. These are used in the proof of (i) and (ii)
in the case when $G$ is close to the union of two disjoint cliques.
\end{abstract}

\maketitle

\section{Introduction}

\subsection{Background and results}

In a sequence of four papers, we develop a unified approach to prove the following results on Hamilton decompositions and 
$1$-factorizations. 
The first of these results confirms the so-called $1$-factorization conjecture for all sufficiently large graphs.
(A \emph{$1$-factorization} of a graph~$G$ consists of a set of edge-disjoint perfect matchings covering all edges of~$G$.)
This conjecture was first stated explicitly by Chetwynd and Hilton~\cite{1factorization,CH}.
However, they wrote that according to Dirac, it was already discussed in the 1950s.%
\COMMENT{AL: 1950s same with abstract.} 

\begin{thm}\label{1factthm}
There exists an $n_0 \in \mathbb N$ such that the following holds.
Let $ n,D \in \mathbb N$ be such that $n\geq n_0$ is even and $D\geq 2\lceil n/4\rceil -1$. 
Then every $D$-regular graph $G$ on $n$ vertices has a $1$-factorization.%
    \COMMENT{So this means that $D\ge n/2$ if $n = 2 \pmod 4$ and $D\ge n/2-1$ if $n = 0 \pmod 4$.}
    Equivalently, $\chi'(G)=D$.
\end{thm}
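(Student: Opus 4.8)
The plan is to reduce Theorem~\ref{1factthm} to a Hamilton decomposition statement. Since $n$ is even, any decomposition of $G$ into $\lfloor D/2\rfloor$ Hamilton cycles together with one perfect matching when $D$ is odd is automatically a $1$-factorization, because every Hamilton cycle on an even number of vertices splits into two perfect matchings. So the target becomes: every $D$-regular graph on $n$ vertices with $D\ge 2\lceil n/4\rceil-1$ has such a Hamilton decomposition. Note that when $4\mid n$ this threshold is the genuinely tight value $D=n/2-1$, which will need special care; for $D$ substantially larger than $n/2$ the $1$-factorization conjecture was already known (Chetwynd--Hilton, Perkovi\'c--Reed), so the real difficulty is concentrated near the threshold.

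Next I would fix small constants and split into three cases according to the global structure of $G$: (a) $G$ is $\varepsilon$-close to the disjoint union of two cliques $K_{n/2}$; (b) $G$ is $\varepsilon$-close to the complete balanced bipartite graph $K_{n/2,n/2}$; (c) $G$ is a robust expander, i.e.\ it admits no sparse cut witnessing approximate membership in either family (a) or (b). In case (c) the approach is approximate decomposition plus absorption: first set aside a sparse robustly decomposable subgraph of $G$; then, exploiting the quasirandomness/expansion of the rest, decompose almost all of its edges into Hamilton cycles, leaving only a sparse remainder $H$; finally apply a robust decomposition lemma to decompose $H$ together with the reserved subgraph exactly into the missing Hamilton cycles (and one perfect matching if $D$ is odd). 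This is the ``quasirandom'' heart of the proof and is essentially clean once the decomposition lemma is in hand.

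The main obstacle is the two extremal cases (a) and (b), and especially the two cliques case (a), which is what the present paper feeds into. Here $G$ looks like two near-cliques $A$ and $B$ with only $O(\varepsilon n^2)$ edges crossing the cut $(A,B)$, plus possibly a few ``exceptional'' vertices whose neighbourhoods are split between the two sides. Because every Hamilton cycle must cross $(A,B)$ a positive, even number of times, essentially \emph{every} crossing edge has to be used, and used in a balanced fashion across the $\lfloor D/2\rfloor$ cycles. The strategy is to package the crossing edges and the exceptional vertices into a bounded number of small vertex-disjoint path systems --- the \emph{exceptional systems} --- each meeting $A$ and $B$ in a tightly controlled pattern; to show (this is where the results on decomposing sparse graphs into path systems are used) that the exceptional edges do decompose into such a family of exceptional systems, one per Hamilton cycle; and then to extend each exceptional system, using the near-completeness of $G[A]$ and $G[B]$, to a full Hamilton cycle while simultaneously keeping the inside-$A$ and inside-$B$ leftovers completable.

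The hard part will be that the counts are razor-thin. When $D=n/2-1$ with $4\mid n$, the supply of crossing edges matches the demand with no slack, so the path-system decomposition must come out exactly balanced, and the parities of how much of $A$ and of $B$ each system consumes must be forced to align across all cycles. I expect this to require a careful enumeration of the possible ``types'' of exceptional system, together with a Hall- or matching-type argument to realize precisely the multiset of types needed; the bipartite case (b) is then handled by an analogous argument with the roles of ``inside a clique'' and ``across the bipartition'' interchanged.
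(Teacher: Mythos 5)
Your opening reduction is where the argument breaks: it is simply false that every $D$-regular graph on an even number $n$ of vertices with $D\ge 2\lceil n/4\rceil-1$ decomposes into $\lfloor D/2\rfloor$ Hamilton cycles and at most one perfect matching. Take $n\equiv 0\pmod 4$ and let $G$ be the disjoint union of two copies of $K_{n/2}$. Then $G$ is $D$-regular with $D=n/2-1=2\lceil n/4\rceil-1$, it has no Hamilton cycle at all, and yet it is $1$-factorizable (each clique $K_{n/2}$ with $n/2$ even is). More generally, if $F$ denotes the minimum cut of $G$, then $G$ has at most $\lfloor F/2\rfloor$ edge-disjoint Hamilton cycles, and in the two-cliques case $F$ can be anywhere between $0$ and $D$. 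This is precisely the point at which the $1$-factorization conjecture and the Hamilton decomposition conjecture diverge (note also that the Hamilton decomposition threshold is $D\ge\lfloor n/2\rfloor$, which is strictly above $n/2-1$ when $4\mid n$, so the latter conjecture does not even apply at your threshold). The correct common generalization, which is what the series actually proves (Theorem~\ref{1factstrong}), is a decomposition into $\lfloor\min\{D,F\}/2\rfloor$ Hamilton cycles and $D-2\lfloor\min\{D,F\}/2\rfloor$ perfect matchings; for even $n$ each Hamilton cycle then splits into two perfect matchings and the $1$-factorization follows. Your sketch never produces the perfect matchings that must replace the missing Hamilton cycles when the cut is small.

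Concretely, this omission shows up in the part of the argument that the present paper supplies. You describe exceptional systems only as fragments to be completed into Hamilton cycles, but the paper needs two kinds: \emph{Hamilton} exceptional systems (with a positive even number of $A'B'$-connections, to be extended to Hamilton cycles) and \emph{matching} exceptional systems (with no $A'B'$-edges, to be extended to two edge-disjoint perfect matchings). The case distinction driving Section~\ref{sec:locES} --- non-critical with $e(A',B')\ge D$, critical with $e(A',B')\ge D$, and $e(A',B')<D$ --- exists exactly because when the crossing edges are scarce or concentrated on one, two or three high-degree vertices, one cannot manufacture enough independent $A'B'$-connections for $D/2$ Hamilton cycles and must instead output matching exceptional systems (Lemma~\ref{lma:PBESdecom}). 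The rest of your outline (three structural cases, robust decomposition plus approximate decomposition in the expander case, localized path systems in the extremal cases) matches the papers' strategy, but the proof cannot be assembled on top of the reduction as you stated it.
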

The bound on the degree in Theorem~\ref{1factthm} is best possible.
Nash-Williams~\cite{initconj,decompconj} raised the related problem of finding a Hamilton decomposition 
in an even-regular graph. Here
a decomposition  of an (even-regular) graph~$G$ into Hamilton cycles consists of a set of edge-disjoint Hamilton cycles covering all edges of~$G$.
If $G$ is a regular graph of odd degree, it is natural to ask for a perfect matching in $G$ together with a decomposition of the remaining edges into Hamilton cycles.
\begin{thm} \label{HCDthm} 
There exists an $n_0 \in \mathbb N$ such that the following holds.
Let $ n,D \in \mathbb N$ be such that $n \geq n_0$ and
$D \ge   \lfloor n/2 \rfloor $.
Then every $D$-regular graph $G$ on $n$ vertices has a decomposition into Hamilton cycles and 
at most one perfect matching.
\end{thm}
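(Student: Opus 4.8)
\smallskip
\noindent\textbf{Proof sketch.}
Theorems~\ref{1factthm} and~\ref{HCDthm} are proved together within one framework; I describe it for Theorem~\ref{HCDthm}. The first reduction handles parity: if $D$ is odd (so that $n$ is even), it suffices to extract from $G$ a single perfect matching $M$ such that $G-M$ is $(D-1)$-regular and still belongs to the same structural class as $G$ (see below); a Hamilton decomposition of $G-M$ together with $M$ is then the required decomposition. Hence we may assume $D$ is even and look for a Hamilton decomposition.

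Fix a small $\eps>0$. The argument splits into three cases according to the global structure of $G$: (a) $G$ is a robust expander; (b) $G$ is $\eps$-close to the balanced complete bipartite graph $K_{n/2,n/2}$; (c) $G$ is $\eps$-close to the disjoint union $K_{n/2}\cup K_{n/2}$ of two cliques. One first checks that these cases exhaust all $D$-regular graphs with $D\ge\lfloor n/2\rfloor$: such a graph that is not $\eps$-close to either extremal configuration can be shown to expand robustly. In case~(a) one invokes the known fact that every sufficiently dense regular robust expander admits a Hamilton decomposition, and one is done.

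The substance lies in the two extremal cases, where the method is uniform. Fix a near-balanced partition of $V(G)$ into parts $A$ and $B$ of size about $n/2$, and separate off a sparse \emph{exceptional} graph $H\subseteq G$: in case~(c), $H$ consists of essentially all $A$--$B$ edges plus a few edges inside $A$ and inside $B$, so that the bulk $G-H$ is close to $K_{n/2}\cup K_{n/2}$ and hence highly structured. The crucial step --- and the subject of the present paper in case~(c) --- is to partition $H$, together with a carefully chosen sparse slice of the bulk, into a bounded number of \emph{exceptional systems}: sparse edge sets $J$ each of which can be completed to a Hamilton cycle of $G$ using only edges inside $A$ and inside $B$. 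Given such a decomposition of $H$, one first finds an approximate Hamilton decomposition of most of the bulk and then applies a robust decomposition lemma to route the exceptional systems, together with the small leftover, through a robustly decomposable remainder of the bulk; this produces all the remaining Hamilton cycles. The analogous argument in case~(b), with the bulk close to $K_{n/2,n/2}$, completes the proof.

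The hard part is case~(c), precisely because the bound $D\ge\lfloor n/2\rfloor$ is tight there. When $D=n/2$ there are only about $n/2$ edges between $A$ and $B$, while each of the roughly $n/4$ Hamilton cycles must use at least two of them; thus essentially every Hamilton cycle uses exactly two crossing edges, and there is no slack. The exceptional systems must therefore be built so that the crossing edges are distributed almost perfectly evenly among them, and so that the degrees remaining inside $A$ and inside $B$ after all exceptional systems are deleted are still amenable to the robust decomposition lemma. Securing this forces a delicate case analysis of the local patterns formed by the crossing edges and a balancing (matching/flow) argument; and carrying the entire scheme over from the idealized configuration to a graph that is merely $\eps$-close to it is the remaining technical burden.
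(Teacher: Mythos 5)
Your overall architecture coincides with the paper's: the trichotomy into robust expanders, graphs close to $K_{n/2,n/2}$, and graphs close to $K_{n/2}\cup K_{n/2}$; the first case via the Hamilton decomposition theorem for dense regular robust expanders; and the two extremal cases via a decomposition of the exceptional edges into exceptional systems, an approximate Hamilton decomposition, and a robustly decomposable graph absorbing the leftover. Your account of why the two-cliques case is critical (each Hamilton cycle needs an even, positive number of crossing edges, and for $D=\lfloor n/2\rfloor$ there is essentially no slack) also matches the paper's discussion.

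However, your opening parity reduction contains a genuine gap. You propose, when $D$ is odd, to first extract a perfect matching $M$ and then Hamilton-decompose the $(D-1)$-regular graph $G-M$. In the critical case this fails: if $D$ is odd and $D=n/2$, then $n\equiv 2\pmod 4$ and $D-1=n/2-1<\lfloor n/2\rfloor$, so $G-M$ lies strictly below the degree threshold, and ``still belonging to the same structural class'' is not sufficient. Concretely, let $G$ consist of two cliques $K_{n/2}$ (with $n/2$ odd) joined by a perfect matching; then every perfect matching $M$ of $G$ uses an odd number $k\ge 1$ of crossing edges, and unless $k=1$ the graph $G-M$ has a cut with fewer than $D-1$ edges and hence admits no Hamilton decomposition at all (each of the $(D-1)/2$ Hamilton cycles must cross every cut at least twice). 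Choosing $M$ so that it meets every near-minimum cut in exactly one edge is essentially as delicate as the original problem, so this is not a harmless preprocessing step. The paper avoids the issue by proving the stronger Theorem~\ref{1factstrong}, in which the Hamilton cycles and the perfect matching(s) are constructed \emph{simultaneously}: the number of Hamilton cycles is $\lfloor\min\{D,F\}/2\rfloor$ with $F$ the minimum cut, and the perfect matchings arise from matching exceptional systems (those with $e_J(A',B')=0$), which are extended using edges inside $A$ and inside $B$ at the end of the argument rather than split off at the start. You would need to replace your reduction by such a simultaneous construction in the two-cliques case; elsewhere (robust expander and near-bipartite cases, or $D>n/2$) your reduction is harmless.
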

Again, the bound on the degree in Theorem~\ref{HCDthm} is best possible and so the theorem confirms the conjecture of 
Nash-Williams for all sufficiently large graphs.

Finally (in combination with~\cite{KLOmindeg}), we also prove an optimal result on the number of edge-disjoint Hamilton cycles one can guarantee
in a graph of given minimum degree, which (as a special case) answers another question of Nash-Williams.
For a detailed discussion of the results and their background we refer to~\cite{paper1}.

\subsection{Overall structure of the argument}

For all  of our main results, we split the argument according to the structure of the graph $G$ under consideration:
\begin{enumerate}
\item[{\rm (i)}] $G$ is  close to the complete balanced bipartite graph $K_{n/2,n/2}$;
\item[{\rm (ii)}] $G$ is close to the union of two disjoint copies of a clique $K_{n/2}$;
\item[{\rm (iii)}] $G$ is a `robust expander'.
\end{enumerate}
Roughly speaking, $G$ is a robust expander if for every set $S$ of vertices, the neighbourhood of $S$ is at least a little larger than $|S|$,
even if we delete a small proportion of the edges of $G$.
The main result of~\cite{Kelly} states that every dense regular robust expander
has a Hamilton decomposition.
This immediately implies Theorems~\ref{1factthm} and~\ref{HCDthm} in Case~(iii).

Case~(i) is proved in~\cite{paper2}. 
Most of the argument for Case~(ii) is contained in~\cite{paper1},
which also includes a more detailed discussion of the overall structure of the proof.
Some of the results needed for Case~(ii) (on decompositions into `exceptional path systems') are proved in the current paper.
Case~(ii) is by far the hardest case for Theorems~\ref{1factthm} and~\ref{HCDthm}, as the extremal examples are all close to the disjoint union of two 
cliques. The arguments in~\cite{paper2,paper1}
make use of an `approximate decomposition' result, which  is proved in~\cite{paper3}.

\subsection{Contribution of the current paper} \label{sketch}

As mentioned above, the current paper is concerned with Case~(ii), i.e.~when $G$ is close to the union of two cliques.
More precisely, we say that a graph $G$ on $n$ vertices is \emph{$\eps$-close to the union of two disjoint copies of $K_{n/2}$}
if there exists $A\subseteq V(G)$ with $|A|=\lfloor n/2\rfloor$ and such that $e(A,V(G)\setminus A)\le\eps n^{2}$.

We will prove results which are used in~\cite{paper1} to prove the following theorem, which 
is a common generalization of Theorems~\ref{1factthm} and Theorems~\ref{HCDthm} in Case~(ii).
Essentially, this theorem guarantees a decomposition into Hamilton cycles and perfect matchings which contains as many Hamilton cycles as possible.

\begin{thm}\label{1factstrong}
For every $\eps_{\rm ex} > 0$ there exists $n_0\in\mathbb{N}$ such that the following holds for all $n\ge n_0$.
Suppose that $D \ge n - 2 \lfloor n/4 \rfloor -1$ and that $G$ is a $D$-regular graph on $n$ vertices which
is $\eps_{\rm ex}$-close to the union of two disjoint copies of $K_{n/2}$.
Let $F$ be the size of a minimum cut in $G$.
Then $G$ can be decomposed into $\lfloor \min \{D,F\} /2 \rfloor$ Hamilton cycles and $D - 2 \lfloor \min \{D,F\} /2 \rfloor$ perfect matchings.
\end{thm}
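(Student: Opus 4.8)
The plan is to combine the structural results on exceptional systems established in the body of this paper with the decomposition framework of~\cite{paper1}, which in turn draws on the approximate decomposition theorem of~\cite{paper3} and on the main result of~\cite{Kelly} on Hamilton decompositions of robust expanders. I would proceed as follows.

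\emph{Step 1: the two-clique partition.} Using the hypothesis that $G$ is $\eps_{\rm ex}$-close to the union of two copies of $K_{n/2}$, an averaging and cleaning argument yields a partition $V(G)=A\cup B\cup V_0$ in which $G[A]$ and $G[B]$ are each very close to complete graphs on roughly $n/2$ vertices, the exceptional set $V_0$ has size $O(\sqrt{\eps_{\rm ex}}\,n)$, every vertex of $A$ has all but at most $\eps n$ of its neighbours inside $A$ (and similarly for $B$), and the set of \emph{exceptional edges} — those incident to $V_0$ together with those joining $A$ to $B$ — has size at most $\eps n^2$, for a suitable $\eps=\eps(\eps_{\rm ex})$. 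Writing $G_{\rm exc}$ for the (sparse) graph spanned by the exceptional edges, $G-E(G_{\rm exc})$ is then the disjoint union of the two near-complete graphs $G[A]$ and $G[B]$.

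\emph{Step 2: reduction to a decomposition of $G_{\rm exc}$ into exceptional systems.} The key point is that the restriction to $A$ of any Hamilton cycle or perfect matching of $G$ is a spanning linear forest of $G[A]$, and likewise on $B$; conversely, gluing such a linear forest on $A$ and one on $B$ together with a suitable small subgraph $J$ satisfying $E(J)\subseteq E(G_{\rm exc})$ and $V_0\subseteq V(J)$ — an \emph{exceptional system} in the terminology of~\cite{paper1} — produces a Hamilton cycle (if $J$ is of ``Hamilton type'') or a perfect matching (if $J$ is of ``matching type'') of $G$. Hence it suffices to (a)~decompose $E(G_{\rm exc})$ into edge-disjoint exceptional systems, of which exactly $\lfloor F/2\rfloor$ are of Hamilton type and the rest of matching type, and then (b)~extend this, using the edges of $G[A]$ and $G[B]$, into $\lfloor F/2\rfloor$ Hamilton cycles and $D-2\lfloor F/2\rfloor$ perfect matchings of $G$. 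Part~(a) is exactly the kind of assertion about decomposing sparse graphs into path systems to which the present paper is devoted; the bound $\lfloor F/2\rfloor$ is forced because every Hamilton cycle of $G$ uses at least two edges crossing a minimum cut, and it is here that the delicate counting lives — one must simultaneously use up every exceptional edge, hit the target numbers of Hamilton- and matching-type systems, and respect all parity obstructions (the parities of $|A|$, $|B|$, $|V_0|$, and of the number of crossing edges assigned to each perfect matching).

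\emph{Step 3: absorption and approximate decomposition, and the main obstacle.} Before carrying out step 2(b) in full I would first reserve, inside $G[A]$ and $G[B]$, small ``robustly decomposable'' almost-regular subgraphs provided by the framework of~\cite{paper1,Kelly}; then apply the approximate decomposition result of~\cite{paper3} to the bulk of $G[A]$ and $G[B]$ to extend all the exceptional systems, up to a sparse leftover, into the required Hamilton cycles and perfect matchings; and finally use the reserved subgraphs to absorb the leftover, completing the decomposition. Since $G[A]$ and $G[B]$ are near-complete and essentially regular after the removal of $G_{\rm exc}$ and of the reserved subgraphs, they meet the hypotheses needed to run this two-stage scheme. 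The substantial difficulty, and the part that occupies most of this paper, is step 2(a): producing, uniformly over all admissible $G$, a decomposition of $G_{\rm exc}$ with \emph{exactly} $\lfloor F/2\rfloor$ Hamilton-type exceptional systems. This requires a careful analysis of how the minimum cut size $F$ interacts with the exceptional edges and the set $V_0$, including the degenerate regimes $F$ close to $0$ (where the output is essentially a $1$-factorization of two near-complete graphs, forcing $|A|$ and $|B|$ to be even) and $F$ close to $D$ (where one wants a genuine Hamilton decomposition), together with simultaneous control of all the parity conditions.
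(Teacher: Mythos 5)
Your outline follows the same overall strategy that this paper (and the series it belongs to) adopts: partition $V(G)$ into $A,A_0,B,B_0$, decompose the exceptional edges into exceptional systems, and extend these into Hamilton cycles and perfect matchings via the robust decomposition machinery of \cite{Kelly} together with the approximate decomposition of \cite{paper3}. Note, however, that Theorem~\ref{1factstrong} is not actually proved in this paper: the present paper only supplies the decomposition of the exceptional edges into exceptional systems (your step 2(a)), and the assembly into Theorem~\ref{1factstrong} is carried out in \cite{paper1}. So the only part of your proposal that can be checked against this paper is step 2(a), and there your description is too coarse to constitute a proof.

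Concretely, two essential ingredients are missing. First, the exceptional systems cannot be arbitrary: almost all of them must be \emph{localized}, i.e.\ confined to $V_0\cup A_i\cup B_{i'}$ for a single pair of clusters of a $(K,m,\eps_0)$-partition, since both the robust decomposition lemma and the approximate decomposition of \cite{paper3} require this as input; your proposal never imposes this constraint, and without it your step 3 does not go through. Second, the construction is not a single ``careful counting'' argument but splits into three structurally different cases --- $G$ non-critical with $e(A',B')\ge D$ (Lemma~\ref{lma:BESdecom}), $G$ critical with $e(A',B')\ge D$ (Lemma~\ref{lma:BESdecomcritical}), and $e(A',B')<D$ (Lemma~\ref{lma:PBESdecom}) --- where ``critical'' means that most $A'B'$-edges are concentrated on at most three vertices; in the critical case one must distribute the edges at these high-degree vertices among the Hamilton exceptional system candidates so that each candidate has a positive \emph{even} number of maximal $A'$--$B'$ paths (not merely an even number of crossing edges), and this, rather than the parities of $|A|$, $|B|$, $|V_0|$ that you list, is the real parity obstruction. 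Two smaller points: the target number of Hamilton-type systems should be $\lfloor\min\{D,F\}/2\rfloor$ rather than $\lfloor F/2\rfloor$ (your count of perfect matchings would be negative when $F>D$), and each matching exceptional system is extended into \emph{two} edge-disjoint perfect matchings, not one, which affects your bookkeeping in step 2.
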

When constructing the Hamilton cycles (and perfect matchings) guaranteed by Theorem~\ref{1factstrong}, a crucial step is to obtain a decomposition of the `exceptional edges'.
To define exceptional edges, we consider a suitable partition of $V(G)$ into sets $A,A_0,B,B_0$ so that 
$A$ and $B$ induce almost complete graphs on close to $n/2$ vertices and $A_0$, $B_0$ contain the (small number of) `exceptional vertices'
which have many neighbours in both $A':=A \cup A_0$ and $B':=B \cup B_0$.
The exceptional edges are all those edges incident to $A_0$ and $B_0$ as well as all those edges joining $A'$ to $B'$.
These exceptional edges will be decomposed into `exceptional (path) systems', and each such exceptional system
will be extended into a Hamilton cycle. (Actually, the exceptional systems  may contain some non-exceptional edges as well.)

The exceptional systems are constructed in the current paper. If we want to extend an exceptional system into a Hamilton cycle,
one obvious necessary property is that the exceptional system needs to contain two independent edges between $A'$ and $B'$.
Another requirement will be that these exceptional systems are `localized', i.e.~given a partition of $A$ and $B$ into clusters, each exceptional system uses only vertices from 
$A_0 \cup B_0$ as well as from one of the clusters in both $A$ and $B$.
Some further constraints are due to the overall structure of the argument, which we outline below.

In~\cite{paper3}, we show how one can extend a suitable set of exceptional systems to obtain an approximate decomposition of $G$,
i.e.~a set of edge-disjoint Hamilton cycles covering almost all edges of~$G$.
However, one does not have any control over the `leftover' graph~$H$, i.e.~it is not clear how to extend this into a decomposition.
In~\cite{Kelly} this problem was solved by introducing the concept of a `robustly decomposable graph'~$G^{\rm rob}$.
Roughly speaking, this is a sparse regular graph with the following property:
given \emph{any} very sparse regular graph~$H$ with $V(H)=V(G^{\rm rob})$ which is edge-disjoint from $G^{\rm rob}$,
one can guarantee that $G^{\rm rob} \cup H$ has a Hamilton decomposition.
This leads to a natural (and very general) strategy to obtain a decomposition of $G$:
\begin{itemize}
\item[(1)] find a (sparse) robustly decomposable graph~$G^{\rm rob}$ in $G$ and let $G'$ denote the leftover;
\item[(2)] find an approximate Hamilton decomposition of $G'$ and let $H$ denote the (very sparse) leftover;
\item[(3)] find a Hamilton decomposition of~$G^{\rm rob} \cup H$.
\end{itemize}
$G^{\rm rob}$ is constructed in~\cite{paper1} using the `robust decomposition lemma' of~\cite{Kelly}. 
As an `input' this lemma needs a suitable set of exceptional systems, which will be part of the decomposition found in this paper.

The nature of the decomposition of the exceptional edges into exceptional systems depends on the structure of the bipartite subgraph $G[A',B']$ of $G$:
we say that $G$ is `critical' if many edges of $G[A',B']$ are incident 
to very few (exceptional) vertices. In our decomposition into exceptional systems,
we will need to distinguish between (a) the non-critical case when $G[A',B']$ contains contains many edges,
(b) the critical case when $G[A',B']$ contains contains many edges, and (c) the case when $G[A',B']$ contains
only a few edges. The three lemmas guaranteeing this decomposition are the main results of this paper.
In these lemmas, we will be able to assume that $A_0$ and $B_0$ are independent sets of vertices, as suitable Hamilton cycles covering all edges 
of $G[A_0]$ and $G[B_0]$ are already found in~\cite{paper1}.

\section{Notation and tools}

\subsection{Notation}

Given a graph $G$, we write $V(G)$ for its vertex set, $E(G)$ for its edge set, $e(G):=|E(G)|$ for
the number of its edges and $|G|:=|V(G)|$ for the number of its vertices. 
We write $\delta(G)$ for the minimum degree of $G$, $\Delta(G)$ for its maximum degree
and $\chi'(G)$ for the edge-chromatic number of~$G$.
Given a vertex $v$ of $G$ and a set $A\subseteq V(G)$,
we write $d_G(v,A)$ for the number of all those neighbours of $v$ in $G$ which lie in~$A$. Given $A,B\subseteq V(G)$,
we write $e_G(A)$ for the number of all those edges of $G$ which have both endvertices in $A$ and $e_G(A,B)$ for the number of
\emph{$AB$-edges} of $G$, i.e.~for the number of all those edges of $G$ which have one endvertex in $A$ and its other endvertex in $B$. If $A\cap B=\emptyset$, we denote by $G[A,B]$ the bipartite subgraph of $G$
whose vertex classes are $A$ and $B$ and whose edges are all $AB$-edges of $G$. We often omit the index $G$ if the graph $G$ is clear from the context.

Given a vertex set $V$ and two edge-disjoint graphs $G$ and $H$ with $V(G),V(H)\subseteq V$, we write $G+H$ for the graph whose vertex
set is $V(G)\cup V(H)$ and whose edge set is $E(G)\cup E(H)$.%
	\COMMENT{Have replaced the definition of $G \cup H$ with $G+H$ since multigraphs do not appear in this paper. So this is different to papers 1 and 2.}
We write $G-H$ for the subgraph of $G$ which is obtained from $G$
by deleting all the edges in $E(G)\cap E(H)$.%
    \COMMENT{So we don't require that $H\subseteq G$ when using this notation.} 
Given $A\subseteq V(G)$, we write $G-A$ for the graph obtained from $G$ by deleting all vertices in~$A$.

We say that a graph $G$ has a \emph{decomposition} into $H_1,\dots,H_r$ if $G=H_1+\dots +H_r$ and the $H_i$ are pairwise
edge-disjoint.

A \emph{path system} is a graph $Q$ which is the union of vertex-disjoint paths (some of them might be trivial).
We say that $P$ is a \emph{path in Q} if $P$ is a component of $Q$ and, abusing the notation, sometimes write $P\in Q$ for this.
We often view a matching $M$ as a graph (in which every vertex has degree precisely one).%
    \COMMENT{This is different to eg the bipartite paper where a matching us a set of edges. CHECK whether we always use this
def in this paper, ie whether we write $e(M)$ for the number of edges and not $|M|$.}

In order to simplify the presentation, we omit floors and ceilings and treat large numbers as integers whenever this does
not affect the argument. The constants in the hierarchies used to state our results have to be chosen from right to left.
More precisely, if we claim that a result holds whenever $0<1/n\ll a\ll b\ll c\le 1$ (where $n$ is the order of the graph),
then this means that
there are non-decreasing functions $f:(0,1]\to (0,1]$, $g:(0,1]\to (0,1]$ and $h:(0,1]\to (0,1]$ such that the result holds
for all $0<a,b,c\le 1$ and all $n\in \mathbb{N}$ with $b\le f(c)$, $a\le g(b)$ and $1/n\le h(a)$. 
We will not calculate these functions explicitly. Hierarchies with more constants are defined in a similar way.
We will write $a = b \pm c$ as shorthand for $ b - c \le a \le b+c$.

\subsection{Tools} We will need the following Chernoff bound for binomial distribution
(see e.g.~\cite[Corollary 2.3]{Janson&Luczak&Rucinski00}).
Recall that the binomial random variable with parameters $(n,p)$ is the sum
of $n$ independent Bernoulli variables, each taking value $1$ with probability $p$
or $0$ with probability $1-p$.

\begin{prop}\label{prop:chernoff}
Suppose $X$ has binomial distribution and $0<a<3/2$. Then
$\mathbb{P}(|X - \mathbb{E}X| \ge a\mathbb{E}X) \le 2 e^{-a^2\mathbb{E}X /3}$.
\end{prop}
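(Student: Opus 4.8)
The plan is to use the standard exponential-moment method (the Bernstein--Chernoff argument), bounding the upper and lower tails separately and then combining the two estimates by a union bound. Write $\mu := \mathbb{E}X$ and $X = \sum_{i=1}^{n} X_i$ with the $X_i$ independent Bernoulli$(p)$ variables; we may assume $\mu > 0$, since otherwise $X \equiv 0$ and there is nothing to prove. The single input that drives both tails is the moment generating function bound: using $1 + x \le e^x$ one gets, for every real $t$, $\mathbb{E}[e^{tX}] = (1 + p(e^t - 1))^n \le e^{np(e^t - 1)} = e^{\mu(e^t - 1)}$.

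For the upper tail, I would fix $t > 0$ and apply Markov's inequality to $e^{tX}$, obtaining $\mathbb{P}(X \ge (1+a)\mu) \le e^{-t(1+a)\mu}\,\mathbb{E}[e^{tX}] \le \exp(\mu(e^t - 1 - t(1+a)))$, and then minimise the exponent over $t$ by taking $t = \ln(1+a) > 0$; this gives $\mathbb{P}(X \ge (1+a)\mu) \le \exp(\mu(a - (1+a)\ln(1+a)))$. To finish I then need the elementary inequality $(1+a)\ln(1+a) - a \ge a^2/3$ for $0 \le a < 3/2$. The lower tail is handled symmetrically: for $t > 0$, Markov applied to $e^{-tX}$ gives $\mathbb{P}(X \le (1-a)\mu) \le e^{t(1-a)\mu}\,\mathbb{E}[e^{-tX}] \le \exp(\mu(e^{-t} - 1 + t(1-a)))$, and setting $t = -\ln(1-a)$ (legitimate when $a < 1$; if $a \ge 1$ then $(1-a)\mu \le 0$, so $\mathbb{P}(X \le (1-a)\mu) \le \mathbb{P}(X = 0) = (1-p)^n \le e^{-\mu} \le e^{-a^2\mu/3}$, using $a^2/3 < 1$) yields $\mathbb{P}(X \le (1-a)\mu) \le \exp(\mu(-a - (1-a)\ln(1-a)))$, and here I need $-a - (1-a)\ln(1-a) \le -a^2/2$ on $[0,1)$. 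Adding the two tail bounds produces the factor $2$ and the claimed estimate $2e^{-a^2\mu/3}$.

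The only genuine work is in the two one-variable inequalities that convert the optimised Chernoff exponents into the clean exponent $a^2/3$, and I expect this bookkeeping to be the only mildly delicate point; everything else is the routine exponential-moment computation. The lower-tail inequality is the easy one: for $h(a) := (1-a)\ln(1-a) + a - a^2/2$ one has $h(0) = h'(0) = 0$ and $h''(a) = a/(1-a) \ge 0$ on $[0,1)$, so $h \ge 0$ there, giving the bound $\exp(-a^2\mu/2) \le \exp(-a^2\mu/3)$. The upper-tail inequality is slightly more delicate because the relevant derivative is not monotone: for $f(a) := (1+a)\ln(1+a) - a - a^2/3$ one has $f(0) = f'(0) = 0$ and $f''(a) = 1/(1+a) - 2/3$, which changes sign at $a = 1/2$, so $f'$ increases on $[0,1/2]$ and then decreases; combining $f'(0) = 0$ with a direct evaluation showing $f(3/2) > 0$ still gives $f \ge 0$ on $[0, 3/2)$, and this is precisely where the hypothesis $a < 3/2$ enters, since the inequality genuinely fails for larger $a$. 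In practice one simply invokes a standard reference such as the one cited above rather than reproving this.
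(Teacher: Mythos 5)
Your proof is correct: the exponential-moment computation, the optimisation of $t$ in each tail, the elementary inequalities $(1+a)\ln(1+a)-a\ge a^2/3$ on $[0,3/2]$ (where the shape analysis via $f''(a)=1/(1+a)-2/3$ together with $f(0)=f'(0)=0$ and $f(3/2)>0$ does suffice) and $(1-a)\ln(1-a)+a\ge a^2/2$ on $[0,1)$, and the separate treatment of $a\ge 1$ in the lower tail all check out. The paper itself offers no proof of this proposition -- it simply cites Corollary~2.3 of Janson, {\L}uczak and Ruci\'nski -- and your argument is precisely the standard one underlying that reference, so there is nothing to compare beyond noting that you have supplied the details the paper outsources.
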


We will also use the following special cases of Propositions~6.1 and 6.3 in~\cite{paper1}%
	\COMMENT{AL: propositions instead of lemmas, added refencne }
 which, given a suitable graph $G$ and a
partition $A',B'$ of $V(G)$, provide bounds on the number
$e_G(A',B')$ of edges between $A'$ and~$B'$.

\begin{prop} \label{prp:e(A',B')}
Let $G$ be a graph on $n$ vertices with $\delta(G) \ge D$ and
let $A',B'$ be a partition of $V(G)$.
If $D \geq n - 2\lfloor n/4 \rfloor -1$, then $e_G (A',B') \geq D$ unless
$n = 0 \pmod4$, $D = n/2 -1$ and $|A'| = |B'| = n/2$.
\end{prop}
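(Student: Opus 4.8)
The plan is to prove a lower bound on $e_G(A',B')$ by a counting/degree argument, with a careful analysis of the one exceptional configuration. Let $a:=|A'|$ and $b:=|B'|$, so $a+b=n$; without loss of generality assume $a\le b$, so $a\le \lfloor n/2\rfloor$. First I would count edge-endpoints in $A'$: since $\delta(G)\ge D$, summing degrees over $A'$ gives $\sum_{v\in A'} d_G(v)\ge Da$, and each such edge-endpoint either lies on an edge inside $A'$ (contributing $2$ to the sum, and there are at most $\binom{a}{2}$ such edges) or on an $A'B'$-edge (contributing $1$). Hence
\begin{equation*}
e_G(A',B') \ge Da - 2\binom{a}{2} = Da - a(a-1) = a(D-a+1).
\end{equation*}
So it suffices to show $a(D-a+1)\ge D$ except in the stated exceptional case. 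Using $D\ge n-2\lfloor n/4\rfloor -1$, I would split according to the parity of $n$ and bound $D-a+1$ from below using $a\le \lfloor n/2\rfloor$.

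The main case is when $a$ is reasonably large; here $a(D-a+1)\ge D$ is easy since the factor $a\ge 2$ (say) and $D-a+1$ is comparable to $D$. Concretely, if $a\ge 2$ then $a(D-a+1)\ge D$ is equivalent to $D(a-1)\ge a(a-1)$, i.e. $D\ge a$, which holds because $a\le \lfloor n/2\rfloor\le D$ by the hypothesis on $D$ (checking: $n-2\lfloor n/4\rfloor-1\ge \lfloor n/2\rfloor$ for all $n$, with equality possibilities governing the boundary). So the only potential failure is $a\le 1$, i.e. $a\in\{0,1\}$. If $a=0$ then $e_G(A',B')=0$, but then $B'=V(G)$ and every vertex has degree $\ge D\ge 1$ within $B'$; this case is vacuous/excluded because a partition into $A',B'$ with one part empty still formally gives $e_G(A',B')=0<D$ — so I need to check whether the proposition intends $A',B'$ both nonempty, or whether the degree condition already forces a contradiction. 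In fact the cleaner route is to not assume $a\le b$ a priori but to note the bound $e_G(A',B')\ge a(D-a+1)$ and its mirror $e_G(A',B')\ge b(D-b+1)$, and observe that at least one of $a,b$ is $\ge n/2$, so one of these bounds has its linear factor large; then optimize.

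The genuinely delicate point — and the part I expect to be the main obstacle — is pinning down \emph{exactly} when equality forces the exceptional outcome $n\equiv 0\pmod 4$, $D=n/2-1$, $|A'|=|B'|=n/2$. When $a=b=n/2$ the bound becomes $e_G(A',B')\ge \frac{n}{2}(D-\frac{n}{2}+1)$, and with $D=n/2-1$ this gives only $e_G(A',B')\ge 0$, so the crude bound is useless and one must argue more carefully (or concede that equality genuinely can fail to reach $D$, which is why this is the stated exception). For all other values of $a$ and $D$ I would show the inequality $a(D-a+1)\ge D$ is strict-or-equal with room to spare, using $D\ge n-2\lfloor n/4\rfloor-1$: when $n\equiv 0\pmod 4$ this reads $D\ge n/2-1$, when $n\equiv 2\pmod 4$ it reads $D\ge n/2$, and for odd $n$ it reads $D\ge \lceil n/2\rceil$. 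In each subcase, combining $D\ge a$ (valid whenever $a\le\lfloor n/2\rfloor$, which we may assume by symmetry after swapping $A',B'$) with $a\ge 2$ yields $e_G(A',B')\ge a(D-a+1)\ge 2(D-a+1)\ge D$ provided $D\ge 2a-2$, which again follows from $D\ge\lfloor n/2\rfloor\ge a$ unless $a$ and $D$ are both near $n/2$ — precisely the boundary that isolates the exceptional case. Since this is a special case of Proposition~6.1 in~\cite{paper1}, I would in practice just invoke that reference; the sketch above is the self-contained argument one would give otherwise.
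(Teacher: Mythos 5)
The paper does not actually prove this proposition here: it is imported verbatim as a special case of Proposition~6.1 of~\cite{paper1}, so there is no in-paper argument to compare against. Your self-contained counting argument is the natural one and is essentially correct: setting $a:=|A'|\le\lfloor n/2\rfloor$, the bound $e_G(A',B')\ge a(D-a+1)$ together with the identity $a(D-a+1)-D=(a-1)(D-a)$ reduces everything to checking $D\ge a$ (and the case $a=1$ needs no such check, since then $a(D-a+1)=D$ outright, so your restriction to $a\ge 2$ is unnecessary). However, your justification of $D\ge a$ contains a genuine misstatement: you assert that $n-2\lfloor n/4\rfloor-1\ge\lfloor n/2\rfloor$ ``for all $n$'', but this fails exactly when $n\equiv 0\pmod 4$, where the hypothesis only gives $D\ge n/2-1=\lfloor n/2\rfloor-1$; you also mislabel the odd cases (for $n\equiv 1\pmod 4$ the bound is $(n-1)/2=\lfloor n/2\rfloor$, not $\lceil n/2\rceil$, though this does not affect the conclusion). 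The clean finish, which your last paragraph gestures at but never nails down, is: if $D<a$, then $D\le\lfloor n/2\rfloor-1$, and combining this with $D\ge n-2\lfloor n/4\rfloor-1$ in each residue class forces $n\equiv 0\pmod 4$, $D=n/2-1$ and $a=n/2$, hence $|A'|=|B'|=n/2$ --- precisely the stated exception. With that case analysis written out (and the convention that a partition has nonempty parts, disposing of $a=0$), your proof is complete.
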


\begin{prop}  \label{prp:e(A',B')2}
Let $G$ be a $D$-regular graph on $n$ vertices with $D \ge \lfloor n/2 \rfloor$.
Let $A',B'$ be a partition of $V(G)$ with $|A'|,|B'| \ge D/2$ and $\Delta(G[A',B']) \le D/2$.
Then 
\begin{align}
e_{G - U}(A',B') \ge 
\begin{cases}
D - 28 & \textrm{if $D \ge n/2$,}\\
D/2 - 28 & \textrm{if $D = (n-1)/2$}
\end{cases}
\nonumber
\end{align}
for every $U \subseteq V(G)$ with $|U| \le 3$.
\end{prop}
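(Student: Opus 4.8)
The plan is to reduce everything to a degree count. Write $a:=|A'|$, $b:=|B'|$, so $a+b=n$; by the symmetry between $A'$ and $B'$ we may assume $a\le b$. Both claimed bounds are at most $D-28$, and $e_{G-U}(A',B')\ge 0$, so we may assume $D\ge 29$ when $D\ge n/2$ and $D\ge 57$ when $D=(n-1)/2$. Since $\Delta(G[A',B'])\le D/2$, every $v\in A'$ satisfies $d_G(v,B')=D-d_G(v,A')\ge D-(a-1)$, forcing $D-a+1\le D/2$, i.e.\ $a\ge D/2+1$; thus $D/2+1\le a\le D$ (as $a\le n/2\le D$ in the first case and $a\le(2D+1)/2$ in the second). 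Let $M_A:=\binom a2-e_G(A')$ and $M_B:=\binom b2-e_G(B')$ count the missing edges inside the two parts, let $m^A_u$ ($u\in A'$) be the number of non-neighbours of $u$ inside $A'$, and similarly $m^B_u$; set $g(x):=x(D-x+1)$. Counting $\sum_{v\in A'}d_G(v,B')$ in two ways gives $e_G(A',B')=g(a)+2M_A$, and likewise $e_G(A',B')=g(b)+2M_B$. Finally, for any $W\subseteq A'$ with $|W|\le 3$ one has the \emph{spreading bound} $\sum_{w\in W}m^A_w\le M_A+\binom{|W|}2$ (split the non-edges of $A'$ met by $W$ according to whether they have one or two endpoints in $W$), and similarly for $B'$.

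Deleting $U$ destroys at most $\sum_{u\in U}d_{G[A',B']}(u)\le 3\Delta(G[A',B'])\le 3D/2$ of the $AB$-edges, so $e_{G-U}(A',B')\ge e_G(A',B')-3D/2$; hence if $e_G(A',B')\ge 5D/2-28$ (when $D\ge n/2$) or $e_G(A',B')\ge 2D-28$ (when $D=(n-1)/2$) we are done. Assume the contrary. Since $e_G(A',B')\ge g(a)$ and $g$ is strictly decreasing on $\{D/2+1,\dots,D\}$ with $g(D-2)=3D-6$ and $g(D-1)=2D-2$, this upper bound forces $a$ to the top of its range: when $D\ge n/2$ it gives $a\in\{D-1,D\}$, so $(a,b)\in\{(D,D),(D-1,D-1),(D-1,D),(D-1,D+1)\}$; when $D=(n-1)/2$ it gives $a=D$, $b=D+1$. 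In all remaining cases $A'$ (and, in the balanced cases, $B'$) is almost complete, and $M_A,M_B$ are small relative to $D$.

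In the case $D\ge n/2$, put $p:=|U\cap A'|$ and $q:=|U\cap B'|$, so $p+q\le 3$. For $u\in U\cap A'$ we have $d_G(u,B')=(D-a+1)+m^A_u\le 2+m^A_u$, and symmetrically $d_G(u,A')\le 2+m^B_u$ for $u\in U\cap B'$ (using $(D-b+1)^+\le 2$); summing and applying the spreading bounds, the number of $AB$-edges met by $U$ is at most $2(p+q)+M_A+M_B+\binom p2+\binom q2\le M_A+M_B+9$. Since the two identities give $M_A+M_B=e_G(A',B')-(g(a)+g(b))/2$, we obtain $e_{G-U}(A',B')\ge e_G(A',B')-(M_A+M_B+9)=(g(a)+g(b))/2-9\ge D-10$, because $g(a)+g(b)\ge 2D-2$ for each of the four admissible pairs. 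In the case $D=(n-1)/2$ we have $g(a)=D$, $g(b)=0$, so the identities become $e_G(A',B')=D+2M_A=2M_B$, whence $M_B=M_A+D/2\ge D/2$; also $d_G(u,B')=1+m^A_u$ for $u\in A'$ and $d_G(u,A')=m^B_u$ for $u\in B'$. Split on $q:=|U\cap B'|$. If $q=0$, $U$ meets at most $p+M_A+\binom p2\le M_A+6$ edges, so $e_{G-U}(A',B')\ge D+M_A-6$. If $q=2$ (so $p\le 1$), bounding the edges met by $U\cap A'$ by $M_A+1$ and those met by $U\cap B'$ by $M_B+1$ gives $e_{G-U}(A',B')\ge 2M_B-(M_A+M_B+2)=M_B-M_A-2=D/2-2$. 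If $q=1$ (so $p\le 2$), bounding the $U\cap B'$-part instead by $\Delta(G[A',B'])\le D/2$ gives $e_{G-U}(A',B')\ge(D+2M_A)-(M_A+3)-D/2=D/2+M_A-3\ge D/2-3$. If $q=3$, then $p=0$ and $U$ meets $\sum_{u\in U}m^B_u\le M_B+3$ edges, so $e_{G-U}(A',B')\ge 2M_B-(M_B+3)=M_B-3\ge D/2-3$. In every case $e_{G-U}(A',B')\ge D/2-3$.

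The delicate point is that the trivial estimate ``$U$ destroys at most $3\Delta(G[A',B'])\le 3D/2$ edges'' is a factor $2$ too weak precisely when $e_G(A',B')$ is of order $2D$, and the statement survives there only because the extremal configuration is rigid --- one side is an almost-complete graph on \emph{exactly} $D$ vertices --- so the true loss is controlled by the small quantities $M_A,M_B$ rather than by $\Delta$. The extra subtlety, in the case $D=(n-1)/2$ with $|U\cap B'|\in\{1,2\}$, is that since $|U\cap A'|+|U\cap B'|\le 3$ the losses on the two sides cannot both be maximal, so one may afford the crude bound $\le\Delta\le D/2$ on whichever side contributes a single deleted vertex; the identities $e_G(A',B')=g(a)+2M_A=g(b)+2M_B$ then make the constants cancel.
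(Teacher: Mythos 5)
Your argument is correct. Note that this paper does not actually prove Proposition \ref{prp:e(A',B')2}: it is quoted as a special case of Propositions~6.1 and~6.3 of \cite{paper1}, so there is no in-paper proof to compare against line by line. Your proof is a legitimate self-contained substitute. I checked the key steps: the identity $e_G(A',B')=g(a)+2M_A=g(b)+2M_B$ with $g(x)=x(D-x+1)$ follows correctly from $D$-regularity; the lower bound $a\ge D/2+1$ follows correctly from $\Delta(G[A',B'])\le D/2$; the monotonicity of $g$ on $\{D/2+1,\dots,D\}$ together with the assumption $e_G(A',B')<5D/2-28$ (resp.\ $<2D-28$) does force $(a,b)$ into the four listed pairs (resp.\ $(D,D+1)$); the spreading bound $\sum_{w\in W}m^A_w\le M_A+\binom{|W|}{2}$ is a correct count of non-edges by multiplicity; and the final case analysis yields $D-10$ and $D/2-3$ respectively, both comfortably within the stated constants $D-28$ and $D/2-28$. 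The one structural idea that makes the whole thing work --- that in the regime where the crude bound $3\Delta\le 3D/2$ is insufficient, the configuration is rigid ($A'$ nearly complete of size $D-1$ or $D$), so the loss from deleting $U$ is governed by the deficiencies $M_A,M_B$ rather than by $\Delta$, and these deficiencies cancel against $e_G(A',B')$ via the double-counting identities --- is exactly the right mechanism, and is the same degree-counting mechanism underlying the corresponding statements in \cite{paper1}. No gaps.
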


Finally, we will also need the following result, which is a simple consequence of 
Vizing's theorem and was first  observed by McDiarmid and independently by de Werra (see e.g.~\cite{west}).%
\COMMENT{no need for original references as it's not that deep...}
\COMMENT{AL: deleted `A proof is also included in~\cite{paper2}.'}

\begin{prop} \label{prop:matchingdecomposition}
Let $G$ be a graph with $\chi '(G) \le m $. Then $G$ has a decomposition into $m$ matchings $M_1, \dots ,M_m$
with $|e(M_i) - e(M_j)| \le 1$ for all $i, j  \le m$.
\end{prop}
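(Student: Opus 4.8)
The plan is to start from an arbitrary proper edge-colouring of $G$ with $m$ colours (which exists since $\chi'(G)\le m$) and then repeatedly rebalance the colour classes by swapping edges along suitable alternating paths until no two classes differ in size by more than one. First I would fix a decomposition $G=N_1+\dots+N_m$ into matchings, some possibly empty, given by the colour classes of a proper $m$-edge-colouring. Suppose this decomposition does not yet satisfy the conclusion, so that there are indices $i,j$ with $e(N_i)\ge e(N_j)+2$. Consider the graph $H:=N_i+N_j$. Since $N_i$ and $N_j$ are matchings, $\Delta(H)\le 2$, so each component of $H$ is a path or a cycle, and every cycle has even length (its edges alternate between $N_i$ and $N_j$). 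On an even cycle, and on a path with an even number of edges, $N_i$ and $N_j$ contribute the same number of edges, while on a path with an odd number of edges exactly one of them contributes one more edge than the other. As $e(N_i)>e(N_j)$, summing over components shows there is a path component $P$ of $H$ with an odd number of edges on which $N_i$ has one more edge than $N_j$.

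Now set $E_i:=E(N_i)\cap E(P)$ and $E_j:=E(N_j)\cap E(P)$, so $|E_i|=|E_j|+1$, and let $N_i'$ be the graph on $V(G)$ with edge set $(E(N_i)\setminus E_i)\cup E_j$ and $N_j'$ the graph with edge set $(E(N_j)\setminus E_j)\cup E_i$. Since $P$ is a component of $H$, both $E_i$ and $E_j$ are matchings within $P$ and the vertices outside $V(P)$ are untouched, so $N_i'$ and $N_j'$ are again matchings; moreover they are edge-disjoint from each other and from every $N_k$ with $k\notin\{i,j\}$, and $E(N_i')\cup E(N_j')=E(N_i)\cup E(N_j)$, so replacing $N_i,N_j$ by $N_i',N_j'$ still yields a decomposition of $G$ into $m$ matchings. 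By construction $e(N_i')=e(N_i)-1$ and $e(N_j')=e(N_j)+1$.

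To see that iterating this step terminates, I would use the potential $\Phi:=\sum_{k=1}^m e(N_k)^2$, a non-negative integer. Writing $a:=e(N_i)$ and $b:=e(N_j)$ with $a\ge b+2$, one swap changes $\Phi$ by
\[
(a-1)^2+(b+1)^2-a^2-b^2 = 2-2(a-b)\le -2<0,
\]
so $\Phi$ strictly decreases and only finitely many swaps are possible. When no swap applies we have $|e(N_i)-e(N_j)|\le 1$ for all $i,j\le m$, which is the required decomposition. The only mildly delicate point is the structural claim that $N_i+N_j$ decomposes into paths and even cycles and that some odd path must be "unbalanced" when the two matchings differ in size; once this is in place the argument is just a short potential-function (monovariant) argument, and I do not anticipate any real obstacle.
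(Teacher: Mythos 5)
Your proof is correct. Note that the paper does not actually prove this proposition: it is stated as a known result of McDiarmid and, independently, de Werra (a standard consequence of having a proper $m$-edge-colouring), with a reference to West's textbook. The argument you give --- taking the colour classes of a proper $m$-edge-colouring, swapping the two colours along an odd path component of $N_i+N_j$ on which the larger class has the surplus edge, and terminating via the potential $\sum_k e(N_k)^2$ --- is exactly the standard proof of that cited result, and all the steps (components of a union of two matchings are paths and even cycles; the swap preserves the matching property because $P$ is a full component; the potential strictly decreases) check out.
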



\section{Exceptional systems, $(K,m,\eps_0)$-partitions and exceptional schemes}\label{sec:BES}

In this section, we formally introduce `exceptional (path) systems'.
Their first property is that the (interiors of) their paths cover all exceptional vertices.

Suppose that $A,A_0,B,B_0$ forms a partition of a vertex set $V$ of size $n$ such that $|A| = |B|$. Let $V_0:=A_0\cup B_0$.
An \emph{exceptional cover} $J$ is a graph which satisfies the following properties:
\begin{enumerate}[label={(EC{\arabic*})}]
\item $J$ is a path system with $V_0\subseteq V(J)\subseteq V$.%
   \COMMENT{One might set $V(J)=V$ instead. Or alternatively, $d_J(v)= 1$ for every $v \in V(J) \setminus V_0$.
However, this does not fit well with the bipartite case.}
\item $d_J(v) =2 $ for every $v \in V_0$ and $d_J(v) \le 1$ for every $v \in V(J) \setminus V_0$.
\item $e_J(A), e_J(B) = 0$.
\end{enumerate}
We say that $J$ is an \emph{exceptional system with parameter~$\eps_0$}, or an \emph{ES} for short, if $J$ satisfies the following properties:
\begin{enumerate}[label={(ES{\arabic*})}]
	\item $J$ is an exceptional cover.
	\item One of the following is satisfied:
	\begin{itemize}
	\item[(HES)] The number of $AB$-paths in $J$ is even and positive. In this case we say $J$ is a \emph{Hamilton exceptional system}, or \emph{HES} for short.
	\item[(MES)] $e_J(A',B')=0$. In this case we say $J$ is a \emph{matching exceptional system}, or \emph{MES} for short. 
\end{itemize}
	\item $J$ contains at most $\sqrt{\epszero} n $ $AB$-paths.
\end{enumerate}
Note that by~(EC2) every $AB$-path in $J$ must be a maximal path in~$J$. 
In~\cite{paper1} we will extend each Hamilton exceptional system $J$ into a Hamilton cycle using only
edges induced by $A$ and edges induced by~$B$. This is%
\COMMENT{AL: change the to is}
 the reason for condition~(HES) since the
number of $AB$-paths in $J$ corresponds to the number of genuine `connections' between $A$ and $B$.%
    \COMMENT{TO DO: The def of a MES had to change. So need to be careful that all knock on effects looked after.}
In~\cite{paper1}, matching exceptional systems will always be extended into two edge-disjoint perfect matchings.

In general, we construct an exceptional system by first choosing an exceptional system candidate (defined below) and then extending it to an
exceptional system. More precisely, suppose that $A,A_0,B,B_0$ forms a partition of a vertex set $V$.
Let $V_0:=A_0\cup B_0$. A graph $F$ is called an \emph{exceptional system candidate with parameter
$\eps_0$}, or an \emph{ESC} for short, if $F$ satisfies the following properties:
\begin{itemize}
	\item[(ESC1)] $F$ is a path system with $V_0\subseteq V(F)\subseteq V$ and such that $e_F(A), e_F(B) = 0$.
	\item[(ESC2)] $d_F(v) \le 2$ for all $v \in V_0$ and $d_F(v) = 1$ for all $v \in V(F) \setminus V_0$.
	\item[(ESC3)] $e_F(A',B') \le \sqrt{ \epszero}  n/2$. In particular, $|V(F) \cap A|, |V(F) \cap B| \le 2|V_0| + \sqrt{ \epszero } n/2$.
	\item[(ESC4)] One of the following holds:
\begin{itemize}
\item[(HESC)] Let $b(F)$ be the number of maximal paths in $F$ with one endpoint in $A'$ and the other in $B'$. Then $b(F)$ is even
and $b(F)>0$. In this case we say that $F$ is a \emph{Hamilton exceptional system candidate}, or \emph{HESC} for short.
\item[(MESC)] $e_F(A',B') =0$. In this case, $F$ is called a \emph{matching exceptional system candidate} or \emph{MESC} for short.
\end{itemize}
\end{itemize}
Note that if $d_F(v) = 2$ for all $v \in V_0$, then $F$ is an exceptional system. 
Also, if $F$ is a Hamilton exceptional system candidate with $e(F) =2$, then $F$ consists of two independent $A'B'$-edges.
Moreover, note that (EC2) allows an exceptional cover~$J$
(and so also an exceptional system~$J$) to contain vertices in $A\cup B$ which are isolated in~$J$. However, (ESC2) does not allow for this
in an exceptional system candidate~$F$.%
    \COMMENT{The latter is quite handy since it allows us to write
$|V(F) \cap A|$ for the number of vertices in $A$ which a incident to an edge of $F$. The former ensures that things are the same as in the
bipartite case for (balanced) exceptional systems and so in the approx paper we don't use two different versions...}

Similarly to condition (HES), in (HESC) the parameter $b(F)$ counts the number of `connections' between $A'$ and $B'$.
In order to extend a Hamilton exceptional system candidate into a Hamilton cycle without using any additional $A'B'$-edges, it is clearly necessary that $b(F)$
is positive and even.

The following result shows that we can extend an exceptional system candidate into a exceptional system by adding
suitable $A_0A$- and $B_0B$-edges. Its easy proof is included in~\cite[Lemma~7.2]{paper1}.%
\COMMENT{AL: added Lemma 7.2}
\begin{lemma} \label{lma:ESextend}
Suppose that $ 0 < 1/n \ll \epszero \ll 1$ and that $n \in \mathbb{N}$.
Let $G$ be a graph on $n$ vertices so that
\begin{itemize}
\item[{\rm (i)}] $A,A_0,B,B_0$ forms a partition of $V(G)$ with $|A_0 \cup B_0| \le \eps_0 n$;
\item[{\rm (ii)}] $d(v,A) \ge \sqrt{\epszero} n$ for all $v \in A_0$ and $d(v,B) \ge \sqrt{\epszero} n$ for all $v \in B_0$.
\end{itemize}
Let $F$ be an exceptional system candidate with parameter $\eps_0$. 
Then there exists an exceptional system $J$ with parameter $\eps_0$ such that $F\subseteq J\subseteq G +  F$%
	\COMMENT{TO DO: check we have changed $F\subseteq J\subseteq G \cup  F$ to $F\subseteq J\subseteq G +  F$ in paper I.}
and such that every edge of $J-F$ lies in $G[A_0,A]+G[B_0,B]$.
Moreover, if $F$ is a Hamilton exceptional system candidate, then $J$ is a Hamilton exceptional system.
Otherwise $J$ is a matching exceptional system.
\end{lemma}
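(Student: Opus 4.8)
The plan is to take an exceptional system candidate $F$ and greedily add edges from $G$ between $A_0$ and $A$, and between $B_0$ and $B$, so as to raise the degree of every vertex of $V_0$ to exactly~$2$, while keeping the resulting graph a path system whose $A'B'$-structure is unchanged.

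First I would observe what needs fixing. By (ESC2), every $v\in V_0$ has $d_F(v)\le 2$. Let $V_0^{\rm bad}$ be the set of $v\in V_0$ with $d_F(v)<2$. For each such $v\in A_0$ we must add $2-d_F(v)\in\{1,2\}$ edges from $v$ to $A$, and similarly for $v\in B_0$ with $B$ in place of $A$; call these the \emph{deficiencies}. Since $|V_0|\le\eps_0 n$, the total number of edges we need to add is at most $2\eps_0 n$. The endpoints of the new edges in $A$ (resp.~$B$) must be chosen among vertices that currently have degree $0$ in the partial graph, so that (EC2)/(ES1) — specifically $d_J(w)\le 1$ for $w\in V(J)\setminus V_0$ — is preserved; and we must avoid creating a cycle through $V_0$.

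The key step is the greedy selection, done one vertex of $V_0^{\rm bad}$ at a time (or one deficiency unit at a time). When processing $v\in A_0$ needing an edge into $A$, I want a neighbour $w\in A$ of $v$ in $G$ such that $w$ has not yet been used as an endpoint of any added edge and $w\notin V(F)\cap A$, i.e.~$w$ is isolated in the graph built so far. Hypothesis~(ii) gives $d_G(v,A)\ge\sqrt{\eps_0}\,n$, while the number of forbidden vertices of $A$ is at most $|V(F)\cap A|+ (\text{edges added so far}) \le (2|V_0|+\sqrt{\eps_0}\,n/2) + 2\eps_0 n \le \sqrt{\eps_0}\,n/2 + 3\eps_0 n$ using (ESC3), which is less than $\sqrt{\eps_0}\,n$ for $\eps_0$ small. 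Hence a valid choice of $w$ always exists; add the edge $vw$. A subtlety is that a vertex $v\in V_0$ with deficiency $2$ must receive two distinct neighbours, and that adding an edge could in principle close a cycle entirely inside $V_0\cup(\text{old endpoints})$ — but since each newly added edge has an endpoint $w$ that is isolated in the current graph (degree $0$), it can never close a cycle; it only ever extends a path or joins two path-ends, so the graph stays a path system throughout. This also shows $d_J(w)\le 1$ for all new $w$, giving (EC2), and since we only touch $A_0A$- and $B_0B$-edges we never create edges inside $A$ or inside $B$, so (EC3) is inherited from (ESC1); thus $J$ is an exceptional cover.

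Finally I would check (ES2)--(ES3) and the Hamilton/matching dichotomy. The added edges lie in $G[A_0,A]+G[B_0,B]$, so none of them is an $A'B'$-edge; hence $e_J(A',B')=e_F(A',B')$ and, more importantly, the set of maximal paths of $J$ with one end in $A'$ and the other in $B'$ is exactly the set of such paths of $F$ (adding an $A_0A$-edge to a path-end in $A'$ keeps that end in $A'$; it cannot merge an all-$A'$ path with an all-$B'$ path). Therefore the number of $AB$-paths of $J$ equals $b(F)$: if $F$ is an HESC this is positive and even, so (HES) holds and $J$ is a Hamilton exceptional system; if $F$ is an MESC then $e_J(A',B')=e_F(A',B')=0$, so (MES) holds. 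For (ES3), $b(F)\le\sqrt{\eps_0}\,n$ follows from (ESC3) since $e_F(A',B')\le\sqrt{\eps_0}\,n/2$ and each $AB$-path uses at least one $A'B'$-edge, so in fact $b(F)\le\sqrt{\eps_0}\,n/2\le\sqrt{\eps_0}\,n$. I expect the only mildly delicate point to be bookkeeping the count of forbidden endpoints so that hypothesis~(ii) still beats it after all additions — but with the $\sqrt{\eps_0}\,n$ versus $O(\eps_0 n)$ slack this is routine.
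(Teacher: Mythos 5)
Your argument is correct and is the intended one: the paper itself defers this ``easy proof'' to \cite[Lemma~7.2]{paper1}, and that proof is precisely this greedy extension --- add $A_0A$- and $B_0B$-edges one at a time, always choosing the $A$- (resp.\ $B$-) endpoint to be isolated in the graph built so far, so that no cycle is created, no two paths are merged, and the $A'B'$-path structure (hence $b(F)$, (HES)/(MES) and (ES3)) is inherited from $F$. Your counting is the right comparison --- hypothesis (ii) gives $\sqrt{\eps_0}\,n$ candidates against at most $\sqrt{\eps_0}\,n/2+4\eps_0 n$ forbidden endpoints from (ESC3) and $|V_0|\le\eps_0 n$ (you wrote $3\eps_0 n$, which changes nothing) --- so nothing further is needed.
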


As mentioned earlier, the exceptional systems we seek will need to be `localized'. For a formal definition,  
let $K,m\in\mathbb{N}$ and $\eps_0>0$.
A \emph{$(K,m,\eps_0)$-partition $\mathcal{P}$} of a set $V$ of vertices is a partition of $V$ into sets $A_0,A_1,\dots,A_K$
and $B_0,B_1,\dots,B_K$ such that $|A_i|=|B_i|=m$ for all $i\ge 1$ and $|A_0\cup B_0|\le \eps_0 |V|$.
The sets $A_1,\dots,A_K$ and $B_1,\dots,B_K$ are called \emph{clusters} of $\mathcal{P}$
and $A_0$, $B_0$ are called \emph{exceptional sets}. We often write $V_0$ for $A_0\cup B_0$ and think of the
vertices in $V_0$ as `exceptional vertices'. Unless stated otherwise, whenever $\mathcal{P}$ is a $(K,m,\eps_0)$-partition,
we will denote the clusters by $A_1,\dots,A_K$ and $B_1,\dots,B_K$ and the exceptional sets by $A_0$ and $B_0$.
We will also write $A:=A_1\cup\dots\cup A_K$, $B:=B_1\cup\dots\cup B_K$, $A':=A_0\cup A_1\cup\dots\cup A_K$
and $B':=B_0\cup B_1\cup\dots\cup B_K$.

Given a $(K,m, \epszero)$-partition $\mathcal{P}$ and $1\le i,i' \le K$, we say that $J$ is an \emph{$ (i,i')$-localized Hamilton exceptional system}
(abbreviated as \emph{$(i,i')$-HES}) if $J$ is a Hamilton exceptional system and $V(J)\subseteq V_0 \cup A_{i} \cup B_{i'}$.
In a similar way, we define
\begin{itemize}
\item \emph{$ (i,i')$-localized matching exceptional systems} (\emph{$(i,i')$-MES}),
\item \emph{$ (i,i')$-localized exceptional systems} (\emph{$(i,i')$-ES}),
\item \emph{$ (i,i')$-localized Hamilton exceptional system candidates} (\emph{$(i,i')$-HESC}),
\item \emph{$ (i,i')$-localized matching exceptional system candidates} (\emph{$(i,i')$-MESC}),
\item \emph{$ (i,i')$-localized exceptional system candidates} (\emph{$(i,i')$-ESC}). 
\end{itemize}
To make clear with which partition we are working, we sometimes also say that
$J$ is an $ (i,i')$-localized Hamilton exceptional system with respect to $\cP$ etc.

Finally, we define an `exceptional scheme', which will be the structure within which
we find our localized exceptional systems. Given a graph $G$ on $n$ vertices and a partition $\mathcal P$ of $V(G)$, we call
$(G, \mathcal{P})$ a \emph{$(K,  m, \epszero,\eps)$-exceptional scheme} if the following properties are satisfied:
\begin{enumerate}[label={(ESch{\arabic*})}]
	\item $\mathcal P$ is a $(K, m, \epszero)$-partition of $V(G)$.
	\item $e(A),e(B) = 0$.
	\item If $v\in A$ then $d(v, B')< \epszero n$ and if $v\in B$ then $d(v, A')< \epszero n$.%
	\COMMENT{Previously had "If $d(v,A'), d(v,B') \ge \epszero n$, then $v \in V_0$". But this doesn't seem to be strong enough.
Also, note that there is no condition on $\Delta(G[A',B'])$ in the definition of exceptional scheme.}
	\item For all $v \in V(G)$ and all $1\le i \le K$ we have
    $d(v, A_i) = ( d(v,A) \pm\eps n ) / K$ and $d(v,B_i) = ( d(v,B) \pm\eps n ) / K$.
	\item For all $1\le i,i' \le K$ we have
	\begin{align*}
	&e(A_0,A_i)  = ( e(A_0,A) \pm \eps \max \{ e(A_0,A) , n \}     )/K ,\\
   &e(B_0,A_i)  = ( e(B_0,A) \pm \eps \max \{ e(B_0,A) , n \}     )/K ,\\
	&e(A_0,B_i)  = ( e(A_0,B) \pm \eps \max \{ e(A_0,B) , n \}     )/K ,\\
	&e(B_0,B_i)  = ( e(B_0,B) \pm \eps \max \{ e(B_0,B) , n \}     )/K ,\\
	&e(A_i,B_{i'})  = ( e(A,B) \pm \eps \max \{ e(A,B) , n \}     )/K^2 .
	\end{align*}
\end{enumerate}

\section{Constructing localized exceptional systems}\label{sec:locES}

Given a $D$-regular graph $G$ and a $(K, m, \epszero)$-partition $\mathcal{P}$ of $V(G)$, let $G':=G-G[A]-G[B]$
and suppose that $(G',\mathcal{P})$ is an exceptional scheme.
Roughly speaking, the aim of this section is to decompose $G'$ into edge-disjoint exceptional systems.
In~\cite{paper1}, each of these exceptional systems $J$ will then be extended into a Hamilton cycle
(in the case when $J$ is a Hamilton exceptional system) or into
two perfect matchings (in the case when $J$ is a matching exceptional system). We will ensure that all but a small number of these
exceptional systems are localized (with respect to $\mathcal{P}$). 

Rather than decomposing $G'$ in a single step, we actually need to proceed in two steps:
initially, we find a small number of exceptional systems $J$ which have some additional useful properties (e.g.~the number of $A'B'$-edges of $J$
is either zero or two). In~\cite{paper1} these exceptional systems will be used to construct the robustly decomposable graph $G^{\rm rob}$.
(Recall that the role of $G^{\rm rob}$ in~\cite{paper1} was also discussed in Section~\ref{sketch}.)
Some of the additional properties of the exceptional systems contained in $G^{\rm rob}$ then allow us to 
find the desired decomposition of  $G^{\diamond} := G'- G^{\rm rob}$ into exceptional systems.

In order to construct the required (localized) exceptional systems, we will distinguish three cases:
\begin{itemize}
\item[(a)] the case when $G$ is `non-critical' and contains at least $D$ $A'B'$-edges (see Lemma~\ref{lma:BESdecom}
in Section~\ref{noncritical}); 
\item[(b)] the case when $G$ is `critical' and contains
at least $D$ $A'B'$-edges (see Lemma \ref{lma:BESdecomcritical} in Section~\ref{sec:critical});
\item[(c)] the case when $G$ contains less than $D$ $A'B'$-edges (see Lemma~\ref{lma:PBESdecom} in Section~\ref{1factsec}).  
\end{itemize}
Each of the three lemmas above is formulated in such a way that we can apply it twice in~\cite{paper1}:
firstly to obtain the small number of exceptional systems needed for the robustly decomposable graph $G^{\rm rob}$ and secondly for the 
decomposition of the graph $G^{\diamond}:=G-G^{\rm rob}-G[A]-G[B]$ into exceptional systems.

\subsection{Critical graphs}

Let $G$ be a $D$-regular graph and let $A',B'$ be a partition of $V(G)$. 
Roughly speaking, $G$ is critical if most of its $A'B'$-edges are incident to only a few vertices. More precisely, we say that
$G$ is \emph{critical} (with respect to $A',B'$ and $D$) if both of the following hold:
\begin{itemize}
\item $\Delta(G[A',B']) \ge 11 D/40$;
\item  $e(H) \le 41 D/40$ for all subgraphs $H$ of $G[A',B']$ with $\Delta(H) \le 11 D/40$.%
     \COMMENT{Note that there is no assumption on $e_G(A',B')$.}
\end{itemize}
One example of a critical graph is the following:
$G_{\rm crit}$ consists of two disjoint cliques on $(n-1)/2$ vertices with vertex set $A$ and $B$ respectively, where $n=4k+1$ for some $k \in \mathbb{N}$.
In addition, there is a vertex $a$ which is adjacent to exactly half of the vertices in each of $A$ and $B$.
Also, add a perfect matching $M$ between those vertices of $A$ and those vertices in $B$ not adjacent to $a$.
Let  $A':=A \cup \{a\}$, $B':=B$ and $D:=(n-1)/2$. 
Then $G_{\rm crit}$ is critical, and $D$-regular with $e(A',B')=D$. Note that $e(M)=D/2$.

To obtain a Hamilton decomposition of $G_{\rm crit}$, 
we will need to decompose $G_{\rm crit}[A',B']$ into $D/2$ Hamilton exceptional system candidates $F_s$
(which need to be matchings of size exactly two in this case).
In this example, this decomposition is essentially unique:
every $F_s$ has to consist of exactly one edge in $M$ and one edge incident to $a$.
Note that in this way, every edge between $a$ and $B$ yields a `connection'  (i.e.~a maximal path) between $A'$ and $B'$ required in (ESC4).

The following lemma collects some properties of critical graphs. In particular, there is a set $W$ consisting of  between one and three vertices
with many neighbours in both $A$ and $B$ (such as the vertex $a$ in~$G_{\rm crit}$ above).
As in the example of $G_{\rm crit}$, we will need to use $A'B'$-edges incident to one or two vertices in $W$
to provide connections between $A'$ and $B'$ when constructing 
the Hamilton exceptional system candidates in the critical case~(b).

\begin{lemma} \label{critical}
Suppose that $0< 1/n \ll 1$ and that $D, n \in \mathbb N$ are such that
\begin{equation} \label{minexact}
D\ge n - 2\lfloor n/4 \rfloor -1=
\begin{cases}
n/2-1 & \textrm{if $n = 0 \pmod 4$,}\\
(n-1)/2 & \textrm{if $n = 1 \pmod 4$,}\\
n/2 & \textrm{if $n = 2 \pmod 4$,}\\
(n+1)/2 & \textrm{if $n = 3 \pmod 4$.}
\end{cases}
\end{equation}
Let $G$ be a $D$-regular graph on $n$ vertices and let $A',B'$ be a partition of $V(G)$ with $|A'|,|B'| \ge D/2$ and $\Delta(G[A',B']) \le D/2$.
Suppose that $G$ is critical.
Let $W$ be the set of vertices $w \in V(G)$ such that $d_{G[A',B']}(w) \ge 11D/40$.
Then the following properties are satisfied:%
\begin{itemize}
	\item[$ \rm (i)$] $1 \le |W| \le 3$.
	\item[$ \rm (ii)$] Either $D = (n-1)/2$ and $n = 1 \pmod{4}$, or $D = n/2-1$ and $n = 0 \pmod{4}$.
	Furthermore, if $n =1 \pmod{4}$, then $|W|=1$.
	\item[$\rm (iii)$] $e_{G}(A',B') \le 17D/10+5 < n$.
	\item[$\rm (iv)$] \begin{align*}
e_{G- W}(A',B') \le 
\begin{cases}
3D/4+5	& \textrm{if $|W|=1$,} \\
19D/{40}+5	& \textrm{if $|W|=2$,} \\
D/5+5	& \textrm{if $|W|=3$.}
\end{cases}
\end{align*}
	\item[$ \rm (v)$] There exists a set $W'$ of vertices such that $W \subseteq W'$, $|W'| \le 3$ and for all $w' \in W'$ and $v \in V(G) \setminus W'$
	we have 
\begin{align*}
d_{G[A',B']}(w') & \ge \frac{21D}{80}, & 
d_{G[A',B']}(v) & \le \frac{11D}{40} &
{\rm and} & &
d_{G[A',B']}(w') - d_{G[A',B']}(v)& \ge \frac{D}{240}.
\end{align*}
	\end{itemize}
\end{lemma}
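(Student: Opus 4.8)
The plan is to extract a small set $W$ of high-degree vertices and play the two defining inequalities of criticality against the degree bound $\Delta(G[A',B'])\le D/2$ and the elementary bounds of Propositions~\ref{prp:e(A',B')} and~\ref{prp:e(A',B')2}. First I would establish~(i): since $G$ is critical, $\Delta(G[A',B'])\ge 11D/40$, so $W\ne\emptyset$. For the upper bound, suppose $|W|\ge 4$. Take any four vertices of $W$; together their incident $A'B'$-edges form a subgraph $H$ of $G[A',B']$ with $\Delta(H)\le 11D/40$ (we may delete edges between two vertices of $W$ so that each retained edge has exactly one endpoint in the chosen four, keeping $\Delta(H)\le 11D/40$ trivially as a subgraph of $G[A',B']$), but $e(H)\ge (4\cdot 11D/40)/2 - \binom 42 \ge 11D/20 - 6 > 41D/40$ for large $n$ — contradicting the second criticality condition. (A little care is needed to set up $H$ so that $\Delta(H)\le 11D/40$ genuinely holds; the cleanest route is to take $H$ to be the union of any four stars of size exactly $\lceil 11D/40\rceil$ centred at these vertices, which is legitimate because each such vertex has $A'B'$-degree $\ge 11D/40$ in $G$.) This gives $|W|\le 3$.

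Next, (ii) follows by combining (i)/(iv) with Proposition~\ref{prp:e(A',B')2}: apply that proposition with $U:=W$ (so $|U|\le 3$) to get $e_{G-W}(A',B')\ge D-28$ if $D\ge n/2$ and $\ge D/2-28$ if $D=(n-1)/2$. On the other hand, once (iv) is proved, $e_{G-W}(A',B')\le 3D/4+5$. For $D\ge n/2$ this forces $D-28\le 3D/4+5$, i.e. $D\le 132$, impossible for large $n$; so we must be in the case $D=(n-1)/2$, which by~\eqref{minexact} means $n=1\pmod 4$, or the boundary case $D=n/2-1$ with $n=0\pmod 4$ left over from Proposition~\ref{prp:e(A',B')} (here one has to handle the $D=n/2-1$, $|A'|=|B'|=n/2$ exception separately, using the $D/2-28$ bound which again applies). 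Finally, if $n=1\pmod 4$ and $|W|\ge 2$, compare $e_{G-W}(A',B')\le 19D/40+5$ (from (iv)) with the lower bound $D/2-28$ from Proposition~\ref{prp:e(A',B')2}: $D/2-28\le 19D/40 + 5$ gives $D/40\le 33$, again impossible for large $n$. Hence $|W|=1$ when $n=1\pmod 4$.

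For (iii) and (iv) the idea is: write $e_G(A',B') = e_{G-W}(A',B') + \sum_{w\in W} d_{G[A',B']}(w) - (\text{edges inside }W)$. Each $w\in W$ has $d_{G[A',B']}(w)\le \Delta(G[A',B'])\le D/2$. For $e_{G-W}(A',B')$ we use criticality: $G[A'\setminus W, B'\setminus W]$ is a subgraph of $G[A',B']$ with maximum degree at most $11D/40$ (by definition of $W$, every vertex outside $W$ has $A'B'$-degree $<11D/40$), so the second criticality condition gives $e_{G-W}(A',B')\le 41D/40$. Actually this already yields (iii): $e_G(A',B')\le 41D/40 + |W|\cdot D/2 \le 41D/40 + 3D/2 < 17D/10 + 5$ — wait, $41/40 + 3/2 = 61/40 < 17/10 = 68/40$, good, so $e_G(A',B')\le 41D/40+3D/2 \le 17D/10$, and the slack of $+5$ absorbs the integer corrections; the final $< n$ comes from $17D/10 + 5 < n$ using $D\le (n+1)/2$ from~\eqref{minexact} (or $D=(n-1)/2$ / $n/2-1$ from the now-proved part of (ii)). For (iv) I would split into the three cases $|W|=1,2,3$: with $|W|=1$, $e_{G-W}(A',B')\le 41D/40$ won't suffice — I need a sharper bound. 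The right approach: when $|W|=k$, consider $H := G[A'\setminus W', B'\setminus W']$ where... no — more directly, bound $e_{G-W}(A',B')$ using the fact that in $G-W$, the vertex degrees in the $A'B'$-bipartite graph are small, and additionally each vertex of $V(G)\setminus W$ has most of its $D$ edges going \emph{inside} $A'$ or inside $B'$. This is where the argument gets delicate, and I would lean on the more refined count: for $v\notin W$, $d_G(v,A') + d_G(v,B') = D$, and the $A'B'$-degree is $<11D/40$, so $d_G(v)$ restricted to its own side is $> D - 11D/40 = 29D/40$; summing and using $|A'|,|B'|\ge D/2$ plus the near-regularity forces $e_{G-W}(A',B')$ down to roughly $(D-k\cdot\frac{D}{2}\text{-ish})$...

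\textbf{Main obstacle.} The genuinely hard part is (iv) (and the very precise constant $21D/80$ and gap $D/240$ in (v)). Parts (i)–(iii) are clean counting plus the black-box Propositions~\ref{prp:e(A',B')} and~\ref{prp:e(A',B')2}. But (iv) requires squeezing $e_{G-W}(A',B')$ from the crude $41D/40$ down to $3D/4+5$, $19D/40+5$, $D/5+5$ — i.e. using that removing the $k$ highest-degree vertices of a critical bipartite graph removes a disproportionately large share of the $A'B'$-edges. The natural tool is: any subgraph of $G[A',B']$ with max degree $\le 11D/40$ has $\le 41D/40$ edges, so $G[A',B']$ itself decomposes (greedily, peeling off max-degree vertices) into few such subgraphs, and $W$ must contain the bulk. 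I expect the proof to set $W'$ in (v) by possibly enlarging $W$ to include up to $3-|W|$ further vertices of largest $A'B'$-degree, then verify the gap $d_{G[A',B']}(w')-d_{G[A',B']}(v)\ge D/240$ by supposing it fails and building a subgraph $H\le G[A',B']$ with $\Delta(H)\le 11D/40$ but $e(H)>41D/40$ (by redistributing edges across the near-threshold vertices), contradicting criticality. Getting all the constants to line up — so that the $11D/40$ threshold is respected while forcing $e(H)$ above $41D/40$ — will be the fiddly core of the argument; everything else is bookkeeping with~\eqref{minexact}.
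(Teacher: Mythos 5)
Your part (i) is essentially the paper's argument (the paper pads four stars to degree exactly $\lceil 21D/80\rceil$ rather than $\lceil 11D/40\rceil$, which both avoids the centre-degree overshoot you worry about and sets up the gap needed for (v)), and your derivation of (ii) from (iv) plus Proposition~\ref{prp:e(A',B')2} is exactly right. But note an arithmetic slip en route: $\sum_{w\in W_4} d(w)$ already counts each edge once except for the at most four edges inside $W_4$, so there is no division by $2$; your displayed inequality $11D/20-6>41D/40$ is in fact false ($11D/20=22D/40$), and only your parenthetical ``union of four stars'' version is sound.

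The genuine gap is (iv), which you yourself flag as the crux and then do not prove. The missing idea is a one-line construction: let $H'$ be a spanning subgraph of $G[A',B']$ containing all of $G[A'\setminus W,B'\setminus W]$ together with exactly $\lfloor 11D/40\rfloor$ edges at each $w\in W$ (possible since $d_{G[A',B']}(w)\ge 11D/40$). Every vertex outside $W$ has $A'B'$-degree below $11D/40$, so $\Delta(H')\le 11D/40$ and criticality gives $e(H')\le 41D/40$; on the other hand $e(H')\ge e_{G-W}(A',B')+\lfloor 11D/40\rfloor\,|W|-2$, whence $e_{G-W}(A',B')\le(41-11|W|)D/40+5$, which is exactly the three cases of (iv). Your alternative sketch (summing within-side degrees of vertices outside $W$) does not obviously close. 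This gap propagates: your ``this already yields (iii)'' step uses the crude bound $e_{G-W}(A',B')\le 41D/40$ and the arithmetic $41/40+3/2=61/40$ is wrong (it is $101/40>17/10$), so (iii) genuinely needs (iv): $e_G(A',B')\le e_{G-W}(A',B')+|W|D/2\le(41+9|W|)D/40+5\le 17D/10+5$, with $<n$ then following from (ii). Finally, for (v) you only conjecture a strategy; the actual argument is a pigeonhole on the ordered degrees $d(w_1)\ge\dots\ge d(w_4)$: since every $w\in W$ has degree at least $11D/40=22D/80$ while $d(w_4)<21D/80$, the total drop of at least $D/80$ occurs over at most three consecutive differences, so some difference is at least $D/240$, and truncating the list there gives $W'$.
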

\begin{proof}
Let $w_1, \dots, w_4$ be vertices of $G$ such that 
\begin{align*}
d_{G[A',B']}(w_1) \ge \dots \ge d_{G[A',B']}(w_4) \ge d_{G[A',B']}(v)
\end{align*}
for all $v \in V(G) \setminus \{w_1, \dots, w_4\}$. Let $W_4:=\{w_1, \dots, w_4\}$. Suppose that $d_{G[A',B']}(w_4) \ge 21D/80$.
Let $H$ be a spanning subgraph of $G[A',B']$ such that
$d_H(w_i)= \lceil 21D/80\rceil$ for all $i\le 4$ and such that every vertex $v\in V(G)\setminus W_4$ satisfies
$N_H(v)\subseteq W_4$.%
    \COMMENT{Such a $H$ can be obtained by starting with $G[A'\cap W_4,B'\cap W_4]$ and adding suitable further edges.}
Thus $\Delta(H)= \lceil 21D/80\rceil$ and so $e(H) \le 41D/40$ since $G$ is critical. On the other hand, $e(H)\ge 4\cdot \lceil 21D/80\rceil -4$,
a contradiction. (Here we subtract four to account for the edges of $H'$ between vertices in $W$.)
Hence, $d_{G[A',B']}(w_4) < 21 D /80$ and so $|W|\le 3$. But $|W|\ge 1$ since $G$ is critical. So (i) holds.

Let $j$ be minimal such that
$d_{G[A',B']}(w_j)\le 21D/80$. So $1<j\le 4$. Choose an index $i$ with $1\le i<j$ such that
$W\subseteq \{w_1,\dots,w_i\}$ and $d_{G[A',B']}(w_i) - d_{G[A',B']}(w_{i+1}) \ge D/240$.
Then the set $W':=\{w_1,\dots,w_i\}$ satisfies~(v).

Let $H'$ be a spanning subgraph of $G[A',B']$ such that $G[A'\setminus W,B'\setminus W]\subseteq H'$ and
$d_{H'}(w)= \lfloor 11D/40 \rfloor$ for all $w\in W$.%
    \COMMENT{Such a $H'$ can be obtained by starting with $G[A'\cap W,B'\cap W]+ G[A'\setminus W,B'\setminus W]$ and adding
suitable further edges between $W$ and $V(G)\setminus W$.}
Similarly as before, $e(H') \le 41D/40$ since $G$ is critical. Thus 
\begin{align*}
	41D/40 & \ge e(H') \ge e ( H'- W ) + \lfloor 11D / 40 \rfloor |W| - 2\\
	& = e_{G- W}(A',B') + \lfloor 11D / 40 \rfloor |W| - 2.
\end{align*}
This in turn implies that
\begin{align}
e_{G- W}(A',B') & \le (41 - 11 |W|)D / 40 + 5 \label{eq:e(HnotW)}.
\end{align}
Together with~(i) this implies~(iv).
If $D \ge n/2$, then by Proposition~\ref{prp:e(A',B')2} we have $e_{G- W}(A',B')  \ge D - 28$.
This contradicts~(iv). Thus~(\ref{minexact}) implies that $D = (n-1)/2$ and $n = 1 \pmod{4}$, or $D = n/2-1$ and $n = 0 \pmod{4}$.
If $n = 1 \pmod 4$ and $D=(n-1)/2$, then Proposition~\ref{prp:e(A',B')2} implies that $e_{G-W}(A',B')\ge D/2-28$.
Hence, by (iv) we deduce that $|W| =1$ and so (ii) holds.
Since $|W| \le 3$ and $\Delta(G[A',B']) \le D/2$, we have
\begin{align*}
	e_G (A',B') & \le e_{G- W}(A',B') + \frac{|W| D}2
		 \stackrel{(\ref{eq:e(HnotW)})}{\le} \frac{(41+9|W|)D}{40}+5 \le \frac{17D}{10}+5 <n.
\end{align*}
(The last inequality follows from~(ii).) This implies~(iii).
\end{proof}

\subsection{Non-critical case with $e(A',B') \ge D$.} \label{noncritical}
Recall from the beginning of Section~\ref{sec:locES} that our aim is
to find a decomposition of $G - G[A] -G[B]$ into suitable exceptional systems (in particular, most of
these exceptional systems have to be localized). The following lemma implies that this can be done
if $G$ is not critical and $e(A',B') \ge D$. We will prove this lemma in this subsection and
will then consider the remaining two cases in the next two subsections.%

\begin{lemma} \label{lma:BESdecom}
Suppose that $0 <  1/n  \ll \epszero \ll  \eps \ll   \lambda, 1/K \ll 1$, that $D\ge n/3$, that $0 \le \phi  \ll 1$
and that%
    \COMMENT{Previously also had that $\phi n \in \mathbb{N}$. But this follows since $D,K, (D - \phi n)/(2K^2) \in \mathbb{N}$.}
$D, n, K, m, \lambda n/K^2, (D - \phi n)/(2K^2) \in \mathbb{N}$. Suppose that the following conditions hold:
\begin{itemize}
	\item[{\rm(i)}] $G$ is a $D$-regular graph on $n$ vertices.%
\COMMENT{Note that $D$ satisfies $1/3 \le D/n < 1$ not $D \ge n - 2\lfloor n/4\rfloor -1$.}
	\item[{\rm(ii)}] $\mathcal{P}$ is a $(K, m, \epszero)$-partition of $V(G)$ such that $D \le e_G(A',B') \le \epszero n^2$ and $\Delta(G[A',B']) \le D/2$.
Furthermore, $G$ is not critical.
	\item[{\rm(iii)}] $G_0$ is a subgraph of $G$ such that $G[A_0]+G[B_0] \subseteq G_0$, $e_{G_0}(A',B') \leq \phi n$ and $d_{G_0}(v) = \phi n $ for all $v \in V_0$.
	\item[{\rm(iv)}] Let $G^{\diamond} := G - G[A] - G[B] -G_0$.  $e_{G^\diamond} (A',B')$ is even and $(G^{\diamond}, \mathcal{P})$
is a $(K, m, \epszero,\eps)$-exceptional scheme.
\end{itemize}
Then there exists a set $\mathcal{J}$ consisting of $(D-\phi n)/2$ edge-disjoint Hamilton exceptional systems with parameter $\eps_0$ in $G^{\diamond}$ which satisfies the
following properties: 
\begin{itemize}
    \item[\rm (a)] Together all the Hamilton exceptional systems in $\mathcal{J}$ cover all edges of $G^{\diamond}$.
	\item[\rm (b)] For all $1\le i,i' \le K$, the set $\mathcal{J}$ contains $(D - (\phi + 2 \lambda) n)/(2K^2)$ $(i,i')$-HES.
	Moreover, $\lambda n /K^2$ of these $(i,i')$-HES $J$ are such that $e_J(A',B') =2$.
\end{itemize}
\end{lemma}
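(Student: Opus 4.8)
The plan is to construct the Hamilton exceptional systems in two stages: first decompose the bipartite graph of all $A'B'$-edges of $G^{\diamond}$ into $(D-\phi n)/2$ edge-disjoint \emph{Hamilton exceptional system candidates} carrying prescribed degrees on $V_0$, the required localisation, and a positive even number of $A'B'$-connections; then extend each candidate into a full Hamilton exceptional system by adding $A_0A$- and $B_0B$-edges via Lemma~\ref{lma:ESextend}, arranging the extensions so that together they consume exactly the remaining edges $G^{\diamond}[A_0,A]+G^{\diamond}[B_0,B]$.

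I would begin with the degree bookkeeping. Since $G[A_0]+G[B_0]\subseteq G_0$ and, by (ESch2), $G^{\diamond}$ has no edge inside $A$ or inside $B$, the graph $G^{\diamond}$ is the edge-disjoint union of the bipartite graph $G^{\diamond}[A',B']$ (with parts $A'$ and $B'$) and the graph $G_1:=G^{\diamond}[A_0,A]+G^{\diamond}[B_0,B]$; moreover $d_{G^{\diamond}}(v)=D-\phi n$ for every $v\in V_0$, while (ESch3) together with $|V_0|\le\eps_0 n$ gives $d_{G^{\diamond}}(v)<2\eps_0 n$ for every $v\in A\cup B$. Hence $e_{G^{\diamond}}(A',B')\ge D-\phi n$, it is even by (iv), and splitting the degree $D-\phi n$ at each $V_0$-vertex evenly over $(D-\phi n)/2$ systems (degree $2$ each) is consistent with the target count. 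Note that every $A'B'$-edge of a final system must already lie in its candidate, because Lemma~\ref{lma:ESextend} only adds $A_0A$- and $B_0B$-edges; so the candidates must partition $G^{\diamond}[A',B']$ exactly, and likewise the pieces fed into the extensions must partition $G_1$ exactly.

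For the candidates, I would first extract, for each pair $1\le i,i'\le K$, a set of $\lambda n/K^2$ candidates each consisting of two independent $A'B'$-edges with $A'$-endpoint in $A_0\cup A_i$ and $B'$-endpoint in $B_0\cup B_{i'}$ — these yield the $(i,i')$-HES with $e_J(A',B')=2$ demanded in (b); here (ESch4)--(ESch5) guarantee that each box $A_i\times B_{i'}$ (together with the edges at $V_0$) carries plenty of edges, and non-criticality of $G$ together with $\Delta(G[A',B'])\le D/2$ guarantees that these edges are not concentrated on a bounded vertex set, so a random/greedy selection controlled by the Chernoff bound (Proposition~\ref{prop:chernoff}) produces the required independent pairs while keeping the residual graph well spread. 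Having removed these $2\lambda n$ edges, I would decompose the rest of $G^{\diamond}[A',B']$ into the remaining $(D-\phi n)/2-\lambda n$ candidates: split it into near-equal matchings via Proposition~\ref{prop:matchingdecomposition}, group a bounded number of matchings into each candidate so that each receives at most $\sqrt{\eps_0}n/2$ $A'B'$-edges (using $e_{G^{\diamond}}(A',B')\le\eps_0 n^2$ and $D\ge n/3$, hence (ESC3)), use (ESch4)--(ESch5) and a further random refinement to place the prescribed number of candidates in each localisation class $(i,i')$, and finally perform bounded local swaps, absorbing the defect into a small reservoir of $\lambda n$ non-localised candidates, so that every candidate has the prescribed $V_0$-degrees and a positive even $b(F)$. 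The evenness of $e_{G^{\diamond}}(A',B')$ feeds a global parity count that makes the last adjustment possible, and non-criticality is exactly what ensures that almost every candidate contains an $A'B'$-edge able to serve as a genuine $AB$-connection, rather than all of its $A'B'$-edges dangling from one or two exceptional vertices (which would force $b(F)=0$).

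Finally I would extend each candidate $F_s$ to a Hamilton exceptional system $J_s$ using Lemma~\ref{lma:ESextend}: hypothesis (i) there holds as $|V_0|\le\eps_0 n$, and hypothesis (ii) holds because $\Delta(G[A',B'])\le D/2$ forces $d(v,A)\ge D/2-\eps_0 n-\phi n\gg\sqrt{\eps_0}n$ for every $v\in A_0$ (symmetrically for $B_0$), a bound unaffected by removing $G_0$ and the allocated $A'B'$-edges. To secure property (a) one must not invoke Lemma~\ref{lma:ESextend} independently for each $s$: by the bookkeeping above the total number of $A_0A$-edges the extensions must add at $v\in A_0$ equals $d_{G^{\diamond}}(v,A)=d_{G_1}(v)$ (and symmetrically on $B$), so instead I would fix in advance a decomposition of $G_1$ into $(D-\phi n)/2$ degree-prescribed, suitably localised pieces — again via Proposition~\ref{prop:matchingdecomposition}-type and random-splitting arguments, using (ESch4)--(ESch5) to keep degrees at $A\cup B$-vertices balanced — and feed piece $s$ into the extension of $F_s$. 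The main obstacle is precisely this interface: simultaneously guaranteeing an exact cover of $G^{\diamond}$, the localisation count for each of the $K^2$ pairs, the positive-even number of $AB$-paths of every resulting system, and edge-disjointness, all of which pull against one another; it is here that non-criticality and the balance conditions (ESch4)--(ESch5), pushed through concentration, do the real work.
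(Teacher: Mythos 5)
Your overall architecture matches the paper's: decompose the $A'B'$-part of $G^{\diamond}$ into Hamilton exceptional system candidates built from (even) matchings via Proposition~\ref{prop:matchingdecomposition}, reserve $\lambda n$ non-localized systems as a buffer, and then extend each candidate by $A_0A$- and $B_0B$-edges via Lemma~\ref{lma:ESextend} with global bookkeeping so that the extensions exactly exhaust $G^{\diamond}[A_0,A]+G^{\diamond}[B_0,B]$. You also correctly identify where non-criticality enters (preventing the $A'B'$-edges of a candidate from all dangling off one or two exceptional vertices, which would force $b(F)=0$) and where the evenness of $e_{G^{\diamond}}(A',B')$ enters. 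However, two steps are under-specified in ways that matter.

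First, the order of localization versus matching-decomposition. You propose to decompose all of $G^{\diamond}[A',B']$ into near-equal matchings and only afterwards ``place the prescribed number of candidates in each localisation class''. But an $(i,i')$-localized candidate must have \emph{all} its vertices in $V_0\cup A_i\cup B_{i'}$, whereas a matching of the whole bipartite graph has edges scattered over all $K^2$ boxes $A_i\times B_{i'}$; no bounded amount of local swapping turns such a matching into a localized one. The paper does this in the opposite order: it first randomly slices $G^{\diamond}$ into $K^2$ localized pieces $H(i,i')+H''(i,i')$ (Lemma~\ref{lma:randomslice}), then moves a few $A'B'$-edges between the pieces so that each $e(H''(i,i'))$ is individually even (this is how the global parity of $e_{G^{\diamond}}(A',B')$ is actually distributed --- a point your sketch leaves implicit), and only then decomposes each piece into even matchings via Proposition~\ref{prop:evenmatching}. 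Your stage needs to be reordered along these lines; moreover, the quantitative form in which non-criticality is used is that each localized piece, after deleting its few high-degree vertices, still retains $\ge 2\alpha n$ edges of small maximum degree (Lemma~\ref{lma:move}(b$_6$)), which is what makes a decomposition into $\alpha n$ \emph{non-empty} even matchings possible.

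Second, the extension stage. Fixing ``in advance'' a decomposition of $G_1=G^{\diamond}[A_0,A]+G^{\diamond}[B_0,B]$ into degree-prescribed pieces and feeding piece $s$ to $F_s$ does not suffice: the edges added to $F_s$ at $v\in V_0$ must have their other endpoints \emph{outside} $V(F_s)$ and outside the endpoints of the edges already assigned to $F_s$, or else $J_s$ fails to be a path system with $d_{J_s}(u)\le 1$ for $u\in A\cup B$. This compatibility cannot be arranged independently of the candidates; the paper resolves it by assigning the edges of $G_1$ at each exceptional vertex one vertex at a time via a perfect matching in an auxiliary bipartite graph (Hall's theorem, Lemma~\ref{globalBES}), using (ESC3) and the bound $d_{G^{\diamond}}(v)\le 2\eps_0 n$ for $v\in A\cup B$ to verify the matching condition. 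A related omission is the degree obstruction that forces the $\lambda n$ non-localized systems to exist at all: a vertex $v\in V_0$ has only about $(D-\phi n)/K^2$ edges into $A_0\cup A_i\cup B_0\cup B_{i'}$, which is not enough to give $v$ degree $2$ in all $\alpha n$ candidates of class $(i,i')$; this, rather than a ``defect-absorbing'' swap, is why roughly $\lambda/K^2$ fraction of each class must be completed using edges from other boxes and hence ends up non-localized.
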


Note that (b) implies that $\mathcal{J}$ contains $\lambda n$ Hamilton exceptional systems which might not be localized.
On the other hand, the lemma is `robust' in the sense that we can remove a sparse subgraph $G_0$ before we find the decomposition $\mathcal{J}$
into Hamilton exceptional systems. (In particular, as discussed at the beginning of the section, we can remove the graph $G^{\rm rob}$ before applying the lemma.)

We will split the proof of Lemma~\ref{lma:BESdecom} into the following four steps:
\begin{itemize}
	\item[\bf Step 1] We first decompose $G^{\diamond}$ into edge-disjoint `localized' subgraphs $H(i,i')$ and $H'(i,i')$ (where $1\le i,i' \le K$).
	More precisely, each $H(i,i')$ only contains $A_0A_i$-edges and $B_0B_{i'}$-edges of $G^\diamond$ while all edges of $H'(i,i')$
lie in $G^{\diamond}[A_0 \cup A_i,  B_0 \cup B_{i'}]$, and all the edges of $G^\diamond$ are distributed evenly amongst the $H(i,i')$ and $H'(i,i')$
(see Lemma~\ref{lma:randomslice}). 
We will then move a small number of $A'B'$-edges between the $H'(i,i')$ in order to obtain graphs $H''(i,i')$ such that $e(H''(i,i'))$ is even (see Lemma~\ref{lma:move}).
	\item[\bf Step 2] We decompose each $H''(i,i')$ into $(D - \phi n )/(2K^2)$ Hamilton exceptional system candidates (see Lemma~\ref{lma:BESdecomprelim}).
	\item[\bf Step 3] Most of the Hamilton exceptional system candidates constructed in Step~2 will be extended into an $(i,i')$-HES (see Lemma~\ref{lma:BESextend2}).
	\item[\bf Step 4] The remaining Hamilton exceptional system candidates will be extended into Hamilton exceptional systems, which need not be localized
	(see Lemma~\ref{globalBES}).
	(Altogether, these will be the $\lambda n$ Hamilton exceptional systems in $\mathcal{J}$ which are not mentioned in Lemma~\ref{lma:BESdecom}(b).)
\end{itemize}

\subsubsection{Step $1$: Constructing the graphs $H''(i,i')$}

The next lemma from~\cite[Lemma~9.2]{paper1}%
\COMMENT{AL: added Lemma 9.2}
 will be used to find a
decomposition of $G^\diamond$ into suitable `localized subgraphs' $H(i,i')$ and $H'(i,i')$ as decribed in Step~1 above.

\begin{lemma} \label{lma:randomslice}
Suppose that $0 <  1/n  \ll \epszero \ll  \eps  \ll 1/K \ll 1$ and that $n, K,m\in \mathbb{N}$.
Let $(G, \mathcal{P})$ be a $(K, m, \epszero, \eps )$-exceptional scheme with $|G|=n$ and $e_G(A_0),e_G(B_0)  =0$.
Then $G$ can be decomposed into edge-disjoint spanning subgraphs $H(i,i')$ and $H'(i,i')$ of $G$ (for all $1 \le i,i' \le K$)%
	\COMMENT{AL: changed $i,i' \le K$ to  $1 \le i,i' \le K$.\\
TO DO: same for paper 1, Lemma 9.2}
such that the following properties hold, where $G(i,i'):=H(i,i')+H'(i,i')$:
\begin{itemize}
\item[\rm (a$_1$)] Each $H(i,i')$ contains only $A_0A_i$-edges and $B_0B_{i'}$-edges.
\item[\rm (a$_2$)] All edges of $H'(i,i')$ lie in $G[A_0 \cup A_i, B_0 \cup B_{i'}]$.
\item[\rm (a$_3$)] $e ( H'(i,i') )   =  ( e_{G}(A',B')  \pm  4 \eps \max \{ n, e_{G}(A',B') \}) /K^2$.
\item[\rm (a$_4$)] $d_{H'(i,i')}(v )  =  ( d_{G[A',B']}(v)  \pm 2\eps n)/K^2$ for all $v \in V_0$.
\item[\rm (a$_5$)] $d_{G(i,i')}(v )  =  ( d_{G}(v)  \pm 4 \eps n)/K^2$ for all $v \in V_0$.
\end{itemize}
\end{lemma}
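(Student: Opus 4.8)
The plan is to produce the decomposition by an independent random assignment of the edges of $G$, followed by a concentration argument. First note that by~(ESch2) together with the hypothesis $e_G(A_0),e_G(B_0)=0$, every edge of $G$ is either an $A_0A_j$-edge or a $B_0B_{j'}$-edge (for some $1\le j,j'\le K$) or an $A'B'$-edge, and every $A'B'$-edge lies in exactly one of $G[A_j,B_{j'}]$, $G[A_0,B_{j'}]$, $G[A_j,B_0]$, $G[A_0,B_0]$. I would then assign the edges as follows, making all random choices mutually independent:
\begin{itemize}
\item each $A_jB_{j'}$-edge with $j,j'\ge 1$ goes into $H'(j,j')$;
\item for each $A_0A_j$-edge choose $i'\in\{1,\dots,K\}$ uniformly at random and put it into $H(j,i')$; symmetrically, for each $B_0B_{j'}$-edge choose $i$ uniformly and put it into $H(i,j')$;
\item for each $A_0B_{j'}$-edge choose $i$ uniformly and put it into $H'(i,j')$; symmetrically, for each $A_jB_0$-edge choose $i'$ uniformly and put it into $H'(j,i')$;
\item for each $A_0B_0$-edge choose $(i,i')\in\{1,\dots,K\}^2$ uniformly and put it into $H'(i,i')$.
\end{itemize}
Finally add isolated vertices so that all $H(i,i')$ and $H'(i,i')$ span $V(G)$. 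Since each edge receives exactly one label, this is a decomposition of $G$, and (a$_1$) and (a$_2$) hold by construction.

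For the remaining properties I would compute the relevant expectations using (ESch4)--(ESch5) and apply Proposition~\ref{prop:chernoff}. For (a$_3$), write $e(H'(i,i'))=e_G(A_i,B_{i'})+X$, where $X$ is a sum of independent indicators over the $A_0B_{i'}$-, $A_iB_0$- and $A_0B_0$-edges with $\mathbb{E}X=e_G(A_0,B_{i'})/K+e_G(A_i,B_0)/K+e_G(A_0,B_0)/K^2$; substituting the estimates of (ESch5) into $e_G(A_i,B_{i'})+\mathbb{E}X$ yields $e_G(A',B')/K^2$ up to an additive error of order $\eps\max\{n,e_G(A',B')\}/K^2$, and a Chernoff bound on $X$ controls its deviation from $\mathbb{E}X$ by a further such term. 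For (a$_4$) with $v\in A_0$ (the case $v\in B_0$ being symmetric), $d_{H'(i,i')}(v)$ is the number of $A_0B_{i'}$-edges at $v$ that chose $i$ plus the number of $A_0B_0$-edges at $v$ sent to $H'(i,i')$, so $\mathbb{E}d_{H'(i,i')}(v)=d_G(v,B_{i'})/K+d_G(v,B_0)/K^2=(d_G(v,B')\pm\eps n)/K^2=(d_{G[A',B']}(v)\pm\eps n)/K^2$ by (ESch4), and Chernoff adds at most $\eps n/K^2$. For (a$_5$) with $v\in A_0$, $d_{G(i,i')}(v)=d_{H(i,i')}(v)+d_{H'(i,i')}(v)$, where $d_{H(i,i')}(v)$ counts the $A_0A_i$-edges at $v$ that chose $i'$, so $\mathbb{E}d_{H(i,i')}(v)=d_G(v,A_i)/K=(d_G(v,A)\pm\eps n)/K^2$; adding the estimate from (a$_4$) and using $d_G(v)=d_G(v,A)+d_G(v,B')$ for $v\in A_0$ gives expectation $(d_G(v)\pm 2\eps n)/K^2$, with Chernoff contributing a further $\eps n/K^2$ to each of the two summands.

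There are only $O(K^2 n)$ events to control, each failing with probability at most $e^{-\Omega(\eps^{c}n)}$ for a suitable constant $c$ (using $0<1/n\ll\eps$), so a union bound shows that with positive probability all of (a$_3$)--(a$_5$) hold simultaneously, and any such outcome gives the claimed decomposition. I expect the main technical nuisance to be the bookkeeping for (a$_3$): one must carry the error terms in the $\max\{n,e_G(A',B')\}$ form throughout so that the bound degrades gracefully both when $G[A',B']$ is dense (where the $\eps e_G(A',B')$ term dominates and Chernoff gives genuine multiplicative concentration of $X$) and when it is sparse or even empty (where the $\eps n$ term dominates and one must instead use the crude upper-tail estimate, since the mean $\mathbb{E}X$ may be tiny and Proposition~\ref{prop:chernoff} only applies for $a<3/2$). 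The degree statements (a$_4$) and (a$_5$) are more routine, the only delicate point being exceptional vertices of small degree, which are handled in the same way.
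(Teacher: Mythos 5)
Your proposal is correct. The paper does not actually prove this lemma here --- it imports it as a special case of \cite[Lemma~9.2]{paper1} --- but the argument there is exactly the random-splitting scheme you describe (independent uniform choices of the missing cluster index for each $A_0A_j$-, $B_0B_{j'}$-, $A_0B_{j'}$-, $A_jB_0$- and $A_0B_0$-edge, deterministic placement of the $A_jB_{j'}$-edges, then (ESch4)--(ESch5) plus Chernoff and a union bound), and you correctly flag the only genuine technical wrinkles: the error terms in (a$_3$) must be carried in the $\max\{n,e_G(A',B')\}$ form, and the upper tail needs a crude bound when the relevant mean is below the target deviation, since Proposition~\ref{prop:chernoff} requires $a<3/2$.
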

Let $H(i,i')$ and $H'(i,i')$ be the graphs obtained by applying Lemma~\ref{lma:randomslice} to $G^{\diamond}$.
As mentioned before, we would like to decompose each $H'(i,i')$ into Hamilton exceptional system candidates.
In order to do this, $e(H'(i,i'))$ must be even. The next lemma shows that we can ensure this property
without destroying the other properties of the $H'(i,i')$ too much by moving a small number of edges 
between the $H'(i,i')$.

\begin{lemma} \label{lma:move}
Suppose that $0 <  1/n  \ll \epszero \ll  \eps \ll   \eps' \ll \lambda, 1/K \ll 1$, that $D\ge n/3$, that
$0 \le \phi  \ll 1$ and that $D, n, K, m,  (D - \phi n)/(2K^2) \in \mathbb{N}$.
Define $\alpha$ by 
\begin{align} \label{alpha}
2 \alpha n := \frac{ D - \phi n }{K^2}  \ \  \ \ \ \ \text{and let} \ \ \ \  \  \ 
\gamma  : = \alpha -  \frac{2\lambda}{K^2}.
\end{align}
Suppose that the following conditions hold:
\begin{itemize}
	\item[$ \rm (i)$] $G$ is a $D$-regular graph on $n$ vertices.
	\item[$ \rm (ii)$] $\mathcal{P}$ is a $(K, m, \epszero)$-partition of $V(G)$ such that $D \le e_G(A',B') \le \epszero n^2$ and $\Delta(G[A',B']) \le D/2$.
Furthermore, $G$ is not critical.
	\item[$ \rm (iii)$] $G_0$ is a subgraph of $G$ such that $G[A_0]+G[B_0] \subseteq G_0$, $e_{G_0}(A',B') \leq \phi n$ and $d_{G_0}(v) = \phi n $ for all $v \in V_0$.
	\item[$ \rm (iv)$] Let $G^{\diamond} := G - G[A] - G[B] -G_0$.  $e_{G^\diamond} (A',B')$ is even and $(G^{\diamond}, \mathcal{P})$ is a $(K, m, \epszero,\eps)$-exceptional scheme.
\end{itemize}
Then $G^{\diamond}$ can be decomposed into edge-disjoint spanning subgraphs $H(i,i')$ and $H''(i,i')$ of $G^{\diamond}$ (for all $1 \le i,i' \le K$)%
	\COMMENT{AL: changed $i,i' \le K$ to  $1 \le i,i' \le K$.}
such that the following properties hold, where $G'(i,i'):=H(i,i')+H''(i,i')$:
\begin{itemize}
\item[\rm (b$_1$)] Each $H(i,i')$ contains only $A_0A_i$-edges and $B_0B_{i'}$-edges.
\item[\rm (b$_2$)] $H''(i,i')\subseteq G^{\diamond}[A',B']$. Moreover,
all but at most $\eps' n$ edges of $H''(i,i')$ lie in $G^{\diamond}[A_0 \cup A_i, B_0 \cup B_{i'}]$.
\item[\rm (b$_3$)] $e(H''(i,i'))$ is even and $ 2 \alpha n \le e(H''(i,i')) \le 11\epszero n^2/(10K^2)$.
\item[\rm (b$_4$)] $\Delta(H''(i,i')) \le 31 \alpha n/30 $.
\item[\rm (b$_5$)] $d_{G'(i,i')}(v )  =  \left( 2 \alpha \pm   \eps' \right) n $ for all $v \in V_0$.
\item[\rm (b$_6$)] Let $\widetilde{H}$ be any spanning subgraph of $H''(i,i')$ which maximises $e(\widetilde{H})$
under the constraints that $\Delta(\widetilde{H}) \le 3\gamma n /5$, $H''(i,i')[A_0,B_0] \subseteq \widetilde{H}$ and $e(\widetilde{H})$ is even.
Then $e(\widetilde{H}) \ge 2 \alpha n $.
\end{itemize}
\end{lemma}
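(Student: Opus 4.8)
The plan is to obtain the $H(i,i')$ together with auxiliary graphs $H'(i,i')$ by first applying Lemma~\ref{lma:randomslice} to $G^{\diamond}$, and then to pass from the $H'(i,i')$ to the $H''(i,i')$ by moving a bounded number of $A'B'$-edges between clusters so as to correct parities and degree excesses. Concretely, I would start by noting that hypotheses (i)--(iv) here are exactly those of Lemma~\ref{lma:BESdecom}, and in particular $(G^{\diamond},\cP)$ is a $(K,m,\epszero,\eps)$-exceptional scheme with $e_{G^{\diamond}}(A_0)=e_{G^{\diamond}}(B_0)=0$ (the latter because $G[A_0]+G[B_0]\subseteq G_0$ is removed). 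So Lemma~\ref{lma:randomslice} applies and yields spanning subgraphs $H(i,i')$, $H'(i,i')$ of $G^{\diamond}$ satisfying (a$_1$)--(a$_5$). Property (b$_1$) is then immediate from (a$_1$), and (b$_5$) follows from (a$_5$) together with $d_{G_0}(v)=\phi n$ for $v\in V_0$ and the definition $2\alpha n=(D-\phi n)/K^2$, with room to spare since $\eps\ll\eps'$.

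The substantive work is Step~1's second half: converting $H'(i,i')$ into $H''(i,i')$ with $e(H''(i,i'))$ even and the degree bound (b$_4$). For parity: since $e_{G^{\diamond}}(A',B')$ is even and equals $\sum_{i,i'}e(H'(i,i'))$, the number of pairs $(i,i')$ with $e(H'(i,i'))$ odd is even. I would pair these up and, for each pair, move a single $A'B'$-edge not incident to $A_0\cup B_0$ from one to the other; using (ESch5)/(a$_3$)/(a$_4$) there are plenty of such edges available, and moving one edge per odd pair changes the totals by at most $1$ and leaves the $A_0,B_0$-degrees untouched. This affects at most $O(n)$ edges in total, far fewer than $\eps' n$ per graph, giving (b$_2$). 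For the lower bound in (b$_3$): by (a$_3$) and $e_{G^{\diamond}}(A',B')\ge D-\phi n-O(1)$ (the $A'B'$-edges removed into $G_0$ number at most $\phi n$), each $e(H'(i,i'))$ is at least $(D-\phi n)/K^2$ minus a $4\eps\max\{n,\cdot\}$ error, which comfortably exceeds $2\alpha n$; small parity corrections preserve this. The upper bound $11\epszero n^2/(10K^2)$ follows from $e_{G^{\diamond}}(A',B')\le\epszero n^2$ and (a$_3$). For (b$_4$): by (a$_4$) every $v\in V_0$ has $d_{H'(i,i')}(v)\le(D/2+2\eps n)/K^2$, and every $v\in A\cup B$ has $d_{H'(i,i')}(v)\le 1$ since $v$ lies in a unique cluster and edges of $H'(i,i')$ within a cluster are excluded; so $\Delta(H'(i,i'))\le D/(2K^2)+O(\eps n/K^2)$. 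Using $2\alpha n=(D-\phi n)/K^2\ge D/K^2 - \phi n/K^2$ and $\phi\ll1$, one checks $D/(2K^2)+O(\eps n/K^2)\le 31\alpha n/30$, and the parity moves add at most one extra unit of degree to finitely many vertices, which is absorbed by the slack.

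Finally, (b$_6$) asks that a maximum-size even-order spanning subgraph $\widetilde H\subseteq H''(i,i')$ with $\Delta(\widetilde H)\le 3\gamma n/5$ and $H''(i,i')[A_0,B_0]\subseteq\widetilde H$ still has $e(\widetilde H)\ge 2\alpha n$. The clean way is a Vizing/McDiarmid-type argument: since $\Delta(G^{\diamond}[A',B'])\le\Delta(G[A',B'])\le D/2$, the graph $H''(i,i')$ after deleting $A_0\cup B_0$ has maximum degree at most roughly $D/(2K^2)$ plus error, hence by Proposition~\ref{prop:matchingdecomposition} decomposes into that many near-equal matchings; greedily taking matchings of $H''(i,i')-(A_0\cup B_0)$ up to degree $3\gamma n/5$ at every vertex, and adding back $H''(i,i')[A_0,B_0]$ (which has $O(\epszero n)$ edges by $|V_0|\le\epszero n$), one captures all but at most a $\Delta/(3\gamma n/5)$-fraction... actually cleaner: remove from $H''(i,i')$ a minimal set of edges so that the max degree drops to $\lfloor 3\gamma n/5\rfloor$; each vertex forces the removal of at most $\Delta(H''(i,i'))-3\gamma n/5\le 31\alpha n/30 - 3\gamma n/5$ incident edges, and since $\gamma=\alpha-2\lambda/K^2$ is very close to $\alpha$ this bound is roughly $(31/30-3/5)\alpha n<\alpha n/2$ per vertex, but only vertices of $V_0$ (at most $\epszero n$ of them, by (ESch3) the cluster vertices have degree $<\epszero n$ into the other side so are never above $3\gamma n/5$) can be over the threshold, so the total number of edges removed is $O(\epszero n^2/K)\ll\alpha n$; hence $e(\widetilde H)\ge e(H''(i,i'))-O(\epszero n^2)\ge 2\alpha n$ by (b$_3$), after adjusting parity by at most one edge which we can avoid deleting from $[A_0,B_0]$. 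I expect (b$_6$), and in particular getting all the constants $11/10$, $31/30$, $3/5$ to line up against $\alpha$ and $\gamma$ simultaneously while respecting the parity and $[A_0,B_0]$-containment constraints, to be the main obstacle; everything else is bookkeeping on top of Lemma~\ref{lma:randomslice}.
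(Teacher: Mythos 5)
Your Step~1 (apply Lemma~\ref{lma:randomslice}, then move a few $A'B'$-edges to fix parities) is the same route as the paper, and (b$_1$), (b$_2$), (b$_4$), (b$_5$) go through essentially as you describe. One small correction there: (a$_3$) gives $e(H'(i,i'))=(1\pm 16\eps)e_{G^\diamond}(A',B')/K^2$, and since $e_{G^\diamond}(A',B')$ can be as small as $D-\phi n=2K^2\alpha n$, this is \emph{below} $2\alpha n$, not comfortably above it; so the edge-moving step must also top up each $H''(i,i')$ by up to $O(K^2\eps\alpha n)$ edges to secure the lower bound in (b$_3$), not merely shift one edge per odd pair.

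The genuine gap is (b$_6$). Your truncation argument never uses the hypothesis that $G$ is not critical, and (b$_6$) is simply false without it: in a critical graph such as $G_{\rm crit}$, almost half the edges of $H''(i,i')$ can be incident to a single vertex $w\in V_0$ with $d_{H''(i,i')}(w)$ close to $\alpha n>3\gamma n/5$, so truncating $w$ to degree $3\gamma n/5$ deletes about $2\alpha n/5$ edges from a graph that may have only $2\alpha n$ edges, leaving fewer than $2\alpha n$. Your quantitative claims do not rescue this: $\alpha n=\Theta(n/K^2)$ while your claimed removal bound $O(\epszero n^2/K)$ grows quadratically in $n$, so ``$O(\epszero n^2/K)\ll\alpha n$'' is false; and ``$e(\widetilde H)\ge e(H''(i,i'))-O(\epszero n^2)\ge 2\alpha n$ by (b$_3$)'' cannot follow since (b$_3$) only guarantees $e(H''(i,i'))\ge 2\alpha n$. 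The paper instead argues by contradiction: if $e(\widetilde H)<2\alpha n$, then the set $X$ of vertices of degree at least $3\gamma n/5-2$ in $\widetilde H$ satisfies $1\le|X|\le 3$, whence $e(H''(i,i'))<4\alpha n$, $e_{G^\diamond}(A',B')\le 5K^2\alpha n$, and every subgraph of $G[A',B']$ with maximum degree at most $11D/40$ has at most $D-\phi n+7K^2\eps' n<41D/40$ edges while $\Delta(G[A',B'])\ge 11D/40$ --- i.e.\ $G$ is critical, contradicting (ii). You need some version of this global argument; a local degree-truncation of $H''(i,i')$ alone cannot prove (b$_6$).
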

\begin{proof}
Since $\phi  \ll 1/3\le D/n$, we deduce that%
    \COMMENT{$\gamma=\alpha -2\lambda/K^2\ge \alpha -2\lambda \cdot 7\alpha=(1-14\lambda)\alpha$}
\begin{align} \label{alphahier}
	\alpha \ge 1/(7K^2), \ \ \ \ \ (1-14\lambda)\alpha\le \gamma<\alpha \ \ \ \ \text{and} \ \ \ \ \eps \ll \eps'  \ll \lambda, 1/K,\alpha, \gamma \ll 1.
\end{align}
Note that (ii) and (iii) together imply that
\begin{align}	\label{alpha1}
e_{G^{\diamond}}(A',B') \ge D - \phi n \stackrel{(\ref{alpha})}{=}  2 K^2 \alpha n \stackrel{(\ref{alphahier})}{\ge } n/4.
\end{align}
By (i) and (iii), each $v \in V_0$ satisfies
\begin{equation}\label{eq:degGdiam}
d_{G^{\diamond}}(v) = D - \phi n \stackrel{(\ref{alpha})}{=} 2 K^2\alpha n.
\end{equation}
Apply Lemma~\ref{lma:randomslice} to decompose $G^{\diamond}$ into subgraphs $H(i,i')$, $H'(i,i')$ (for all $1\le i,i' \le K$)
satisfying the following properties, where $G(i,i'):=H(i,i')+H'(i,i')$:
\begin{itemize}
\item[(a$_1'$)] Each $H(i,i')$ contains only $A_0A_i$-edges and $B_0B_{i'}$-edges.
\item[(a$_2'$)] All edges of $H'(i,i')$ lie in $G^{\diamond}[A_0 \cup A_i, B_0 \cup B_{i'}]$.
\item[(a$_3'$)] $e ( H'(i,i') )   =   (1 \pm 16\eps)  e_{G^\diamond}(A',B') /K^2$.
	In particular, 
\begin{align*}
	 2 (1- 16\eps) \alpha n \le e(H'(i,i')) \le (1+16 \eps) \epszero n^2/K^2.
\end{align*}
\item[(a$_4'$)] $d_{H'(i,i')}(v )  =  ( d_{G^\diamond[A',B']}(v)  \pm 2 \eps n)/K^2$ for all $v \in V_0$.
\item[(a$_5'$)] $d_{G(i,i')}(v )  =  ( 2 \alpha  \pm 4 \eps/K^2)n$ for all $v \in V_0$.
\end{itemize}
Indeed, (a$'_3$) follows from (\ref{alpha1}), Lemma~\ref{lma:randomslice}(a$_3$) and~(ii),
while (a$'_5$) follows from (\ref{eq:degGdiam}) and Lemma~\ref{lma:randomslice}(a$_5$).
We now move some $A'B'$-edges of $G^\diamond$ between the $H'(i,i')$ such that the graphs $H''(i,i')$
obtained in this way satisfy the following conditions:
\begin{itemize}
\item Each $H''(i,i')$ is obtained from $H'(i,i')$ by adding or removing at most $32K^2 \eps \alpha n\le \sqrt{\eps}n$ edges.
\item $e(H''(i,i'))\ge 2 \alpha n$ and $e(H''(i,i'))$ is even.
\end{itemize}
Note that this is possible by~(a$_3'$) and since $\alpha n\in\mathbb{N}$ and $e_{G^\diamond} (A',B') \geq 2K^2\alpha n$ is even by~(iv).

We will show that the graphs $H(i,i')$ and $H''(i,i')$ satisfy conditions (b$_1$)--(b$_6$).
Clearly both (b$_1$) and (b$_2$) hold.
(a$_3'$) implies that
\begin{equation} \label{eH'}
e(H''(i,i')) = (1\pm 16 \eps) e_{G^{\diamond}}(A',B')/K^2\pm \sqrt{\eps}n \overset{(\ref{alphahier}),(\ref{alpha1}) }{=}
(1\pm \eps') e_{G^{\diamond}}(A',B')/K^2.
\end{equation}
Together with (ii) and our choice of the $H''(i,i')$ this implies (b$_3$). (b$_5$) follows from (a$'_5$)
and the fact that $d_{G'(i,i')}(v )  =d_{G(i,i')}(v ) \pm \sqrt{\eps}n$.
Similarly, (a$_4'$) implies that for all $v \in V_0$ we have
\begin{equation}\label{dH'}
d_{H''(i,i')}(v )  =  ( d_{G^{\diamond} [A',B']}(v)  \pm \eps' n)/{K^2}.
\end{equation}
Recall that $\Delta(G[A',B']) \le D/2$ by~(ii). Thus
$$
	\Delta(H''(i,i'))  \overset{\eqref{dH'}}{\le}   
\frac{D/2 + \eps' n   }{K^2}   \overset{\eqref{alpha}}{=}  \left( \alpha +  \frac{\phi + 2\eps' }{2K^2} \right) n 
 \overset{\eqref{alphahier}}{\le}  \frac{31 \alpha n }{30},
$$
so (b$_4$) holds.

So it remains to verify (b$_6$). To do this, fix $1 \le i, i' \le K$%
\COMMENT{AL: changed $i,i' \le K$ to  $1 \le i,i' \le K$.}
 and set $H'' :=H''(i,i')$.
Let $\widetilde{H}$ be a subgraph of $H''$ as defined in (b$_6$). We need to show that $e(\widetilde{H}) \ge 2 \alpha n$.
Suppose the contrary that $e(\widetilde{H}) < 2 \alpha n$.
We will show that this contradicts the assumption that $G$ is not critical.
Roughly speaking, the argument will be that if $\widetilde{H}$ is sparse, then so is~$H''$.
This in turn implies that $G^\diamond$ is also sparse, and thus any subgraph of $G[A',B']$ of comparatively small maximum degree is also sparse, 
which leads to a contradiction.

Let $X$ be the set of all those vertices $x$ for which $d_{\widetilde{H}}(x) \geq  3 \gamma n/ 5 -2$. So $X\subseteq V_0$ by~(iv) and (ESch3).
Note that if $X = \emptyset$, then  $\widetilde{H} = H''$ and%
   \COMMENT{If $\widetilde{H} \neq H''$ then $e(H'')-e(\widetilde{H}) \ge 2$ (as it is even). But the maximality of $\widetilde{H}$ implies that
 adding any two edges in $H''-\widetilde{H}$ to $\widetilde{H}$ would create a vertex
of degree $>3 \gamma n/ 5$.}
so $e(\widetilde{H}) \ge  2 \alpha n$ by (b$_3$). If $|X| \ge 4$, then $e(\widetilde{H}) \ge 4 (  3 \gamma n/ 5  -2)  -4\ge 2\alpha n $
by~\eqref{alphahier}. Hence $1\le |X| \le 3$. Note that $\widetilde{H}-X$ contains all but at most one edge from $H''-X$.%
   \COMMENT{Might have deleted one edge in $H''-X$ in order to ensure that $e(\widetilde{H})$ is even.}
Together with the fact that $\widetilde{H}[X]$ contains at most two edges (since $|X|\le 3$ and $\widetilde{H}$ is bipartite) this implies
that
\begin{align}\label{eq:tildeH}
2\alpha n  > e(\widetilde{H}) & \ge e(\widetilde{H}-X)+ \left( \sum_{x \in X} d_{\widetilde{H}}(x) \right) -2\ge e(H''-X)-1 +|X|(3 \gamma n/ 5  -2)-2\nonumber \\
& \ge e(H'')-\sum_{x \in X} d_{H''}(x)+|X|(3 \gamma n/ 5  -2)-3\nonumber \\
& =e(H'')-\sum_{x \in X} (d_{H''}(x)-3 \gamma n/ 5  +2)-3
\end{align}
and so
\begin{align}\label{eq:eH''}
e(H'')& \overset{(\ref{dH'})}{< } 2 \alpha n + \sum_{x \in X} \left( \frac{d_{G^{\diamond}[A',B']}(x)+\eps' n}{K^2} - 3 \gamma n /5 +2 \right) +3.
\end{align}
Note that (b$_4$) and (\ref{eq:tildeH}) together imply that if $e(H'')\ge 4\alpha n$ then
$e(\widetilde{H})\ge e(H'')-|X|(31\alpha n/30- 3 \gamma n/ 5  +2)-3\ge 2\alpha n$. Thus $e(H'')< 4\alpha n$ and by (\ref{eH'})
we have $e_{G^{\diamond}}(A',B')\le 4K^2\alpha n/(1-\eps')\le 5K^2\alpha n\le 3n$. Hence
\begin{align}\label{alpha3}
	e_{G^{\diamond}}(A',B') & \stackrel{(\ref{eH'})}{\le}  K^2 e(H'')+\eps' e_{G^{\diamond}}(A',B')\le K^2 e(H'')+3\eps'n\nonumber \\
& \stackrel{(\ref{eq:eH''})}{\le}
  D- \phi n + 7 \eps' n + \sum_{x \in X } \left( d_{G^{\diamond}[A',B']}(x) -  K^2(3\gamma n/5) \right).
\end{align}
Let $G'$ be any subgraph of $G^{\diamond}[A',B']$ which maximises $e(G')$ under the constraint that $\Delta(G') \le K^2(3 \gamma / 5 +2\eps' ) n$.
Note that if $d_{G^{\diamond}[A',B']}(v) \ge  K^2(3 \gamma / 5 +2 \eps' ) n $, then $v\in V_0$ (by~(iv) and (ESch3)) and so
$d_{H''}(v) > 3 \gamma n/ 5$ by~\eqref{dH'}. This in turn implies that $v \in X$.
Hence%
   \COMMENT{The +2 in the next inequality accounts for the edges in $G^{\diamond}[A'\cap X,B'\cap X]$.}
\begin{eqnarray}
	e(G') & \le & e_{G^{\diamond}}(A',B') -  \sum_{x \in X } \left( d_{G^{\diamond}[A',B']}(x) -  K^2( 3\gamma /5+ 2\eps' )n  \right)+2 \nonumber\\
	& \overset{\eqref{alpha3}}{\le} & D- \phi n + 7K^2\eps' n.\label{eq:eG'}
\end{eqnarray}
Note that (\ref{dH'}) together with the fact that $X\neq \emptyset$ implies that
$$\Delta(G[A',B'])\ge \Delta (G^\diamond[A',B'])\ge K^2(3 \gamma n/ 5 -2) -\eps' n\stackrel{(\ref{alpha}), (\ref{alphahier})}{\ge} 11D/40.$$
Since $G$ is not critical this means that
there exists a subgraph $G''$ of $G[A',B']$ such that $\Delta(G'') \le 11 D /40 \le   K^2( 3\gamma /5+ 2 \eps' )n  $ and $e(G'') \ge 41 D/40$.
Thus
\begin{align*}
	D - \phi n + 7K^2 \eps' n \stackrel{(\ref{eq:eG'})}{\ge} e(G') \ge e(G'') - e_{G_0}(A',B') \ge 41 D/40 - \phi n,
\end{align*}
which is a contradiction.
Therefore, we must have $e(\widetilde{H}) \ge 2 \alpha n$.
Hence (b$_6$) is satisfied.
\end{proof}


\subsubsection{Step $2$: Decomposing $H''(i,i')$ into Hamilton exceptional system candidates}

Our next aim is to decompose each $H''(i,i')$ into $\alpha n$%
	\COMMENT{$\alpha n = \frac{D- \phi n }{2K^2}$ as defined in Lemma~\ref{lma:move}.}
 Hamilton exceptional system candidates (this will follow from Lemma~\ref{lma:BESdecomprelim}).
Before we can do this, we need the following result on decompositions of bipartite graphs into `even matchings'.
We say that a matching is \emph{even} if it contains an even number of edges, otherwise it is \emph{odd}.

\begin{prop}\label{prop:evenmatching}
Suppose that $0<1/n \ll \gamma \le 1$ and that $n, \gamma n \in \mathbb N$.
Let $H$ be a bipartite graph on $n$ vertices with $\Delta(H) \le 2\gamma n/3$ and where $ e(H) \geq 2 \gamma n$ is even.
Then $H$ can be decomposed into $\gamma n$ edge-disjoint non-empty even matchings, each of size at most $3e(H)/(\gamma n)$.
\end{prop}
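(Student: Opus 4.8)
The plan is to first use Proposition~\ref{prop:matchingdecomposition} (the consequence of Vizing's theorem) to split $H$ into roughly $\gamma n$ balanced matchings, and then repair the parity of any odd ones by swapping single edges between pairs of odd matchings. Concretely: since $H$ is bipartite, $\chi'(H)=\Delta(H)\le 2\gamma n/3 \le \gamma n$, so by Proposition~\ref{prop:matchingdecomposition} we obtain a decomposition of $H$ into $\gamma n$ matchings $M_1,\dots,M_{\gamma n}$ with $|e(M_i)-e(M_j)|\le 1$ for all $i,j$. Since $e(H)\ge 2\gamma n$, averaging gives $e(M_i)\ge \lfloor e(H)/(\gamma n)\rfloor \ge 2$ for every $i$, so every matching is non-empty; and $e(M_i)\le \lceil e(H)/(\gamma n)\rceil \le 2e(H)/(\gamma n)$, comfortably within the required size bound $3e(H)/(\gamma n)$ even after the parity correction below.

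Next I would fix parity. Let $\mathcal{O}$ be the set of indices $i$ with $e(M_i)$ odd. Since $e(H)=\sum_i e(M_i)$ is even, $|\mathcal{O}|$ is even, say $\mathcal{O}=\{i_1,\dots,i_{2t}\}$. For each consecutive pair $(i_{2j-1},i_{2j})$, I want to move exactly one edge from one of the two matchings into the other: if $M_{i_{2j-1}}$ contains an edge $e$ whose two endpoints are not covered by $M_{i_{2j}}$, then replace $M_{i_{2j-1}}$ by $M_{i_{2j-1}}-e$ and $M_{i_{2j}}$ by $M_{i_{2j}}+e$; both now have even size, and $M_{i_{2j}}+e$ is still a matching. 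Such an edge $e$ exists provided $M_{i_{2j-1}}$ is large enough relative to $\Delta(H)$: each edge of $M_{i_{2j}}$ can block at most two edges of $M_{i_{2j-1}}$ through a shared endpoint, but in fact a cleaner count is that the at most $2|e(M_{i_{2j}})|$ endpoints of $M_{i_{2j}}$ can each lie on at most one edge of $M_{i_{2j-1}}$, so they block at most $2|e(M_{i_{2j}})|$ edges; since $|e(M_{i_{2j-1}})|\ge 2$ this need not immediately suffice, so instead I would argue via degrees in $H$, or — more robustly — simply move an edge from whichever of the two matchings is not "blocked", i.e.\ pick the edge $e\in M_{i_{2j-1}}$ maximising freedom, using that $|e(M_{i_{2j-1}})| + |e(M_{i_{2j}})| \le \Delta(H)\cdot(\text{something})$ is false in general. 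The correct and simplest route: note $2\,e(M_{i_{2j}}) + 2\,e(M_{i_{2j-1}}) \le e(H)\cdot 2/(\gamma n)\cdot 2 \ll n$ while $H$ has $n$ vertices, but that does not bound blocking either. So the genuinely safe argument is: both matchings are subgraphs of $H$, and I want an edge $uv\in M_{i_{2j-1}}$ with $u,v\notin V(M_{i_{2j}})$; the number of edges of $M_{i_{2j-1}}$ meeting $V(M_{i_{2j}})$ is at most $2e(M_{i_{2j}}) \le 4e(H)/(\gamma n)$, so it suffices that $e(M_{i_{2j-1}}) > 4e(H)/(\gamma n)$, which unfortunately contradicts the upper bound. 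Hence the final repair instead allows the moved edge's endpoint to already be covered, and we take $e=uv\in M_{i_{2j-1}}$, delete from $M_{i_{2j}}$ the (at most two) edges meeting $\{u,v\}$, add $e$ to $M_{i_{2j}}$, and push the deleted edges into a small "reservoir"; after processing all pairs the reservoir has $O(t)=O(\gamma n)$ edges, which we redistribute greedily — each reservoir edge can be added to any matching avoiding its two endpoints, and since there are $\gamma n\gg t$ matchings such a matching exists. One checks sizes stay below $3e(H)/(\gamma n)$ because only $O(1)$ edges per matching are added or removed and $e(H)/(\gamma n)\ge 2$.

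The main obstacle is precisely this parity-repair bookkeeping: one must exhibit, for each pair of odd matchings, a legal edge-move that makes both even while preserving the matching property, and then argue the reservoir of displaced edges can be re-absorbed without violating the size bound $3e(H)/(\gamma n)$ or the non-emptiness requirement. I expect the cleanest writeup uses the slack between $2e(H)/(\gamma n)$ (the size after Vizing) and $3e(H)/(\gamma n)$ (the allowed size), together with the slack $\gamma n \gg |\mathcal O|$, to absorb all displaced edges; the degree condition $\Delta(H)\le 2\gamma n/3$ guarantees $\chi'(H)\le \gamma n$ so that we start with exactly $\gamma n$ parts, and also ensures no single vertex is over-saturated during the moves. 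Everything else — the averaging bounds on $e(M_i)$, non-emptiness, the final size estimate — is a routine calculation using $1/n \ll \gamma$.
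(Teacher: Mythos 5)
Your opening steps (Vizing/K\"onig plus Proposition~\ref{prop:matchingdecomposition}, the averaging bounds, non-emptiness) are fine, but the parity-repair step — which you yourself identify as the main obstacle — has a genuine gap, and your proposed fixes do not close it. Moving an edge $e=uv$ from one odd matching into another fails exactly as you compute: the target matching may cover $u$ or $v$, and the counting needed to find an unblocked edge contradicts your own upper bound on matching sizes. The ``reservoir'' fallback is worse than unanalyzed: if you delete the conflicting edges of $M_{i_{2j}}$ meeting $\{u,v\}$ and add $e$, the parity of $M_{i_{2j}}$ flips only when the number of deleted edges is even — deleting exactly one conflicting edge leaves $M_{i_{2j}}$ odd, so the pair is not repaired. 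Re-absorbing reservoir edges one at a time then flips parities of the receiving matchings all over again, so you would need to re-insert them in disjoint pairs into a common matching, which reintroduces the same blocking problem. As written, the argument does not terminate in a configuration where all matchings are even.

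The paper's proof avoids in-place edge moves entirely by two small ideas you are missing. First, it observes that it suffices to produce \emph{at most} $\gamma n$ non-empty even matchings of the right size, since an even matching of size at least $4$ can be split into two non-empty even matchings, and $e(H)\ge 2\gamma n$ guarantees such a matching exists while fewer than $\gamma n$ parts remain. Second, it applies Proposition~\ref{prop:matchingdecomposition} with $n':=\lfloor 2\gamma n/3\rfloor$ parts (not $\gamma n$), so that every matching has size at least $e(H)/n'-1\ge 2$ and hence every \emph{odd} matching has size at least $3$. Pairing off the (evenly many) odd matchings $M_s,M_{s'}$, one finds disjoint edges $e\in M_s$, $e'\in M_{s'}$ (possible since $e(M_s)\ge 3$ and the two endpoints of $e'$ meet at most two edges of the matching $M_s$) and replaces the pair by the three even matchings $M_s-e$, $M_{s'}-e'$ and $\{e,e'\}$. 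This increases the number of matchings, which is precisely why one must start with only $n'$ of them: the total stays below $3n'/2\le\gamma n$. Your choice of $\gamma n$ initial parts leaves no room for this, which is what forces you into the problematic in-place swaps. I recommend restructuring along these lines rather than trying to salvage the reservoir argument.
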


\begin{proof}
First note that since $e(H) \ge 2\gamma n $, it suffices to show that $H$ can be decomposed into at most $\gamma n $ edge-disjoint non-empty even matchings,
each of size at most $3e(H)/(\gamma n)$. Indeed, by splitting these matchings further if necessary, one can obtain precisely $\gamma n $
non-empty even matchings.

Set $n' := \lfloor 2\gamma n/3 \rfloor$.
K\"onig's theorem implies that $\chi '(H) \le n'$. So Proposition~\ref{prop:matchingdecomposition} implies that there
is a decomposition of $H$ into $n'$ edge-disjoint
matchings $M_1, \dots ,M_{n'}$ such that $|e(M_s) - e(M_{s'})| \le 1$ for all $s,s' \le n'$.
Hence we have 
\begin{align*}
2 \le \frac{e(H)}{n'} -1 \le e(M_s) \le \frac{e(H)}{n'}+1 \le \frac{3e(H)}{\gamma n}
\end{align*}
for all $s \le n'$.%
   \COMMENT{The final inequality holds since $ \frac{e(H)}{n'}+1\le  \frac{e(H)}{\gamma n/2}+1\le \frac{3e(H)}{\gamma n}$ since $\frac{e(H)}{\gamma n}\ge 2$.}
Since $e(H)$ is even, there are an even number of odd matchings.
Let $M_s$ and $M_{s'}$ be two odd matchings.
So $e(M_s),e(M_{s'}) \ge 3$ and thus there exist two disjoint edges $e \in M_s$ and $e' \in M_{s'}$.
Hence, $M_s - e$, $M_{s'} - e'$ and $\{e, e'\}$ are three even matchings.
Thus, by pairing off the odd matchings and repeating this process,  the proposition follows.
\end{proof}

\begin{lemma} \label{lma:BESdecomprelim}
Suppose that $0 < 1/n \ll \eps_0 \ll \gamma  < 1$, that $ \gamma + \gamma' < 1$ and that $n, \gamma n, \gamma' n \in \mathbb N$.
Let $H$ be a bipartite graph on $n$ vertices with vertex classes $A \dot\cup A_0$ and $ B  \dot\cup B_0$, where $|A_0|+|B_0| \le \eps_0 n$.
Suppose that
\begin{itemize}
	\item[{\rm (i)}] $e(H)$ is even, $\Delta(H) \le 16 \gamma n /15$ and $\Delta(H[A,B]) < (3 \gamma /5 - \epszero) n$. 
\end{itemize}
Let $H'$ be a spanning subgraph of $H$ which maximises $e(H')$ under the constraints  that $\Delta(H') \le 3 \gamma n /5$, $H[A_0,B_0] \subseteq H'$
and $e(H')$ is even. Suppose that
\begin{itemize}
\item[{\rm (ii)}] 
$2( \gamma +  \gamma' )n \le e(H') \le 10 \epszero  \gamma n^2$.	
\end{itemize}
Then there exists a decomposition of $H$ into edge-disjoint Hamilton exceptional system candidates $F_1, \dots ,F_{\gamma n},F'_1, \dots ,F'_{\gamma' n}$
with parameter $\eps_0$ such that $e(F'_s) = 2$ for all $s \le \gamma' n$.
\end{lemma}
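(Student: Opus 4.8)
The plan is to extract the Hamilton exceptional system candidates in two stages: first peel off the $\gamma'n$ candidates $F_s'$ that must be single pairs of independent $A'B'$-edges, and then decompose what remains into $\gamma n$ candidates $F_s$, each of which will be an even matching between $A'$ and $B'$ together with the leftover edges inside $A'$ and $B'$ (i.e.\ edges at $A_0$ or $B_0$). The key structural observation is that property (ESC4) for a \emph{Hamilton} candidate only requires $b(F)$, the number of maximal $A'B'$-paths, to be positive and even; if each $F_s$ is built around an even matching in $H[A',B']$ of size at least $2$, then every one of its matching edges is a maximal $A'B'$-path, so $b(F_s)$ equals the size of that matching, which is even and positive, giving (HESC). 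Conditions (ESC1)--(ESC3) are degree and size bounds that will follow from $\Delta(H)\le 16\gamma n/15$, $|A_0\cup B_0|\le\eps_0 n$ and $e(H)\le 10\eps_0\gamma n^2$ together with $\eps_0\ll\gamma$.

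First I would work with the subgraph $H'$ from the statement, which has $\Delta(H')\le 3\gamma n/5$, contains $H[A_0,B_0]$, has even size, and satisfies $2(\gamma+\gamma')n\le e(H')\le 10\eps_0\gamma n^2$. Apply Proposition~\ref{prop:evenmatching} to $H'$ (noting $3\gamma n/5\le 2\gamma n/3$ so the maximum-degree hypothesis is met, and $e(H')\ge 2\gamma n$ is even) to decompose $H'$ into $\gamma n$ non-empty even matchings $N_1,\dots,N_{\gamma n}$, each of size at most $3e(H')/(\gamma n)\le 30\eps_0 n$. Now I would like to split off $\gamma'n$ pairs of independent edges: since $e(H')\ge 2(\gamma+\gamma')n$, a counting argument shows that at least (say) $\gamma'n$ of these matchings have size $\ge 4$, or more robustly one can redistribute edges among the $N_j$ exactly as in the proof of Proposition~\ref{prop:evenmatching} to arrange that $\gamma'n$ of the even matchings have size exactly $2$; call these $F_1',\dots,F_{\gamma'n}'$ and the remaining $\gamma n$ of them $N_1',\dots,N_{\gamma n}'$, each still a non-empty even matching between $A'$ and $B'$.

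Next I would distribute the edges of $H-H'$ among the $N_s'$. By the maximality of $H'$, every edge of $H-H'$ is incident to a vertex whose $H'$-degree is already close to $3\gamma n/5$; since $\Delta(H[A,B])<(3\gamma/5-\eps_0)n$, all such vertices lie in $A_0\cup B_0$, so every edge of $H-H'$ meets $V_0:=A_0\cup B_0$, and the total number of such edges is at most $\Delta(H)\,|V_0|\le \eps_0 n\cdot 16\gamma n/15$. Greedily add each edge of $H-H'$ to some $N_s'$, keeping the number of $V_0$-edges assigned to each $N_s'$ bounded (this is easy since there are $\gamma n$ parts and only $O(\eps_0\gamma n^2)$ edges to place), and set $F_s:=N_s'$ after these additions. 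Then $F_s$ consists of an even positive matching between $A'$ and $B'$ plus some edges at $V_0$; we have $e_{F_s}(A),e_{F_s}(B)=0$ because $H$ is bipartite with classes $A\dot\cup A_0$ and $B\dot\cup B_0$ and no $A$-$A$ or $B$-$B$ edges exist in $H$. One checks (ESC2): vertices of $V_0$ get degree $\le 2$ by the bounded assignment, and every vertex outside $V_0$ has degree exactly $1$ in $F_s$ (it lies on exactly one matching edge) — here I must be slightly careful to arrange the decomposition of $H'$ and the added edges so that each non-$V_0$ vertex ends up with degree exactly $1$ rather than $0$; this can be guaranteed by first matching-decomposing $H$ itself (not just $H'$) so that every edge of $H$ is used, then merging the small pieces appropriately. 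Finally (ESC3) follows from $e(F_s)\le 30\eps_0 n + O(\eps_0 n)\le\sqrt{\eps_0}n/2$, and (HESC) holds as noted.

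The main obstacle I expect is the bookkeeping around (ESC2) and (HESC) simultaneously: the matching edges have to give an \emph{even, positive} count of maximal $A'B'$-paths in each $F_s$ \emph{after} the $V_0$-edges have been glued on, and gluing a $V_0$-edge onto a matching edge can merge two matching edges into one longer $A'B'$-path or attach a pendant edge, changing the parity and count of $b(F_s)$. The right way to handle this is to insist that the added edges of $H-H'$ are placed so as not to connect two distinct matching edges of the same $F_s$ into a single path (possible because $V_0$ is tiny, so most of each even matching $N_s'$ is untouched), ensuring $b(F_s)$ stays equal to the number of untouched matching edges plus the number of $V_0$-paths produced, and that this total is kept even by, if necessary, moving one matching edge between two of the $F_s$ — exactly the parity-fixing trick already used in Proposition~\ref{prop:evenmatching} and in the proof of Lemma~\ref{lma:move}. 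I would then verify the numerical inequalities using $0<1/n\ll\eps_0\ll\gamma<1$ and $\gamma+\gamma'<1$, all of which are of the same elementary type as those appearing in the proof of Lemma~\ref{lma:move}.
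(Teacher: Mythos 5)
Your skeleton matches the paper's (peel off $\gamma'n$ two-edge matchings from $H'$, decompose the rest of $H'$ into even matchings via Proposition~\ref{prop:evenmatching}, then attach the edges of $H-H'$, all of which meet $V_0$), and you correctly identify the central difficulty: attaching the $V_0$-incident leftover edges can change $b(F_s)$. But the mechanism you propose for resolving that difficulty is where the gap lies. If you add a \emph{single} edge $wu$ with $w\in A_0$, $u\in B$ to a matching $N'_s$, then either $u$ or $w$ already lies in $V(N'_s)$ (violating (ESC2) at $u$, or destroying an $A'B'$-path at $w$ and flipping the parity of $b$), or the new edge is itself an isolated maximal $A'B'$-path and again flips the parity. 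The idea you are missing is the one the paper uses: since $H[A_0,B_0]\subseteq H'$, every edge of $H-H'$ at $w\in A_0$ ends in $B$ (not $B_0$), so the leftover edges at $w$ can be attached in \emph{pairs} forming a cherry $uwu'$ with $u,u'\in B$ chosen outside $V(M_s)$ and outside the previously placed cherries; such a cherry is a maximal path with both endpoints in $B'$, so it contributes nothing to $b(F_s)$ and the parity and positivity of $b(F_s)$ are inherited verbatim from the even non-empty matching $M_s$. Your fallback of ``moving one matching edge between two of the $F_s$'' is not developed and is delicate: it changes the parity of both donor and recipient, must preserve vertex-disjointness and $b\ge 2$ in both, and cannot deal with the genuinely unpaired edges at $V_0$-vertices of odd leftover degree.

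A second, quantitative gap: you justify the assignment by bounding the \emph{total} $e(H-H')\le \eps_0 n\cdot 16\gamma n/15$, but the binding constraint is per vertex. A single $w\in V_0$ can have $d_{H-H'}(w)$ as large as $16\gamma n/15-\lfloor 3\gamma n/5\rfloor+2\approx 7\gamma n/15$, and each cherry at $w$ must go to a candidate with $d_{M_s}(w)=0$ (else $d_{F_s}(w)=3$). One must therefore check that the number of such candidates, roughly $\gamma n-d_{H'}(w)\ge 2\gamma n/5$, exceeds $d_{H-H'}(w)/2\approx 7\gamma n/30$ --- true, but only by the margin $12/30$ vs.\ $7/30$, and this is exactly the counting (together with a reserved set of untouched matchings and a final even-matching decomposition of the unplaceable remainder) that the paper's proof is built around and that your ``greedily add, this is easy'' step omits.
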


Since we are in the non-critical case with many edges between $A'$ and $B'$, we will be able to assume that the subgraph $H'$
satisfies (ii).

Roughly speaking, the idea of the proof of Lemma~\ref{lma:BESdecomprelim} is to apply the previous proposition to decompose $H'$ into a suitable number of even matchings 
$M_i$ (using the fact that it has small maximum degree).%
\COMMENT{AL: rephrased the following sentences}
We then extend these matchings into Hamilton exceptional system candidates to cover all edges of $H$.
The additional edges added to each $M_i$ will be vertex-disjoint from $M_i$ and form vertex-disjoint 2-paths $uvw$ with $v \in V_0$.
So the number of connections from $A'$ to $B'$ remains the same (as $H$ is bipartite).
Each matching $M_i$ will already be a Hamilton exceptional system candidate, which means that $M_i$ and its extension will have the correct number of connections from $A'$ to $B'$ 
(which makes this part of the argument simpler than in the critical case).

\removelastskip\penalty55\medskip\noindent{\bf Proof of Lemma~\ref{lma:BESdecomprelim}. }
Set $A': = A_0 \cup A$ and $B':= B_0 \cup B$. We first construct the $F'_s$.
If $\gamma '=0$, there is nothing to do. So suppose that $\gamma ' >0$.
Note that each $F'_s$ has to be a matching of size~2 (this follows from the definition of a Hamilton exceptional system candidate and the
fact that $e(F'_s) = 2$).
Since $H'$ is bipartite and so
$$\frac{e(H')}{\chi '(H')}=\frac{e(H')}{\Delta(H')}\ge \frac{2( \gamma +  \gamma' )n}{3 \gamma n /5} > \frac{10}{3},$$
we can find a 2-matching $F'_1$ in $H'$. Delete the edges in $F'_1$ from $H'$ and choose another 2-matching $F'_2$.
We repeat this process until we have chosen $\gamma'n$ edge-disjoint $2$-matchings $F'_1, \dots ,F'_{\gamma'n}$.

We now construct  $F_1, \dots ,F_{\gamma n}$ in two steps: first we construct matchings $M_1, \dots, M_{\gamma n}$ in $H'$
and then extend each $M_i$ into the desired $F_i$.
Let $H_1$ and $H'_1$ be obtained from $H$ and $H'$ by removing all the edges in $F'_1, \dots ,F'_{\gamma'n}$.
So now $2\gamma n \le e(H'_1) \le 10 \epszero \gamma n^2 $ and both $e(H_1)$ and $e(H'_1)$ are even. Thus Proposition~\ref{prop:evenmatching}
implies that there is a decomposition of $H'_1$ into edge-disjoint non-empty even matchings $M_1, \dots ,M_{\gamma n }$,
each of size at most $30 \epszero  n$. 

Note that each $M_i$ is a Hamilton exceptional system candidate with parameter~$\eps_0$.
So if $H'_1 = H_1$, then we are done by setting $F_s: = M_s$ for each $s \le \gamma n$.
Hence, we may assume that $H'' := H_1 - H'_1=H-H'$ contains edges.
Let $X$ be the set of all those vertices $x \in A_0 \cup B_0$ for which $d_{H''}(x)> 0$.
Note that each $x\in X$ satisfies $N_{H''}(x)\subseteq A\cup B$ (since $H[A_0,B_0] \subseteq H'$).
This implies that
each $x \in X$ satisfies
$d_{H'}(x) \ge \lfloor  3\gamma n /5 \rfloor -1$ or $d_{H''}(x)=1$.
(Indeed, suppose that $d_{H'}(x) \le \lfloor  3\gamma n /5 \rfloor -2$ and $d_{H''}(x)\ge 2$.
Then we can move two edges incident to $x$ from $H''$ to $H'$.
The final assumption in (i) and the assumption on $d_{H'}(x)$ together imply that we would still have $\Delta(H') \le 3 \gamma n/5$, a contradiction.)
Since  $\Delta(H) \le 16 \gamma n /15$ by~(i) this in turn implies that
$d_{H''}(x) \le 7 \gamma n /15 +2$ for all $x \in X$.

Let $\mathcal{M}$ be a random subset of $\{M_1, \dots, M_{\gamma n}\}$ where each $M_i$ is chosen independently with probability $2/3$.
By Proposition~\ref{prop:chernoff}, with high probability, the following assertions hold:
\begin{align}
	r : = |\mathcal{M}| & = (2/3 \pm \epszero){\gamma n} \nonumber \\
	|\{M_s \in \mathcal{M} : d_{M_s}(v) = 1 \}| & = 2 d_{H'_1}(v)/3  \pm \epszero{\gamma n}	& \text{for all } v \in V(H). \label{eqn:Mi}
\end{align}
By relabeling if necessary, we may assume that $\mathcal{M} = \{ M_1, M_2, \dots, M_r\}$.
For each $s \le r$, we will now extend $M_s$ to a Hamilton exceptional system candidate $F_s$ with parameter $\eps_0$ by adding edges from~$H''$.
Suppose that for some $1\le s\le r$ we have already constructed $F_1, \dots ,F_{s-1}$. Set $H''_s := H'' - \sum_{j < s} F_j$.
Let $W_s$ be the set of all those vertices $w \in X$ for which $d_{M_s}(w)=0$ and $d_{H''_s}(w) \ge 32\epszero n  \ge 2|A_0\cup B_0|+ e(M_s)$.
Recall that $X\subseteq A_0\cup B_0$ and $N_{H''_s}(w) \subseteq N_{H''}(w) \subseteq A \cup B$ for each $w\in X$ and thus also for
each $w \in W_s$. Thus there are $|W_s|$ vertex-disjoint 2-paths $uwu'$ with $w \in W_s$ and $u,u' \in N_{H''_s}(w) \setminus V(M_s)$.
Assign these 2-paths to $M_s$ and call the resulting graph $F_s$.
Observe that $F_s$ is a Hamilton exceptional system candidate with parameter $\eps_0$.
Therefore, we have constructed $F_1, \dots ,F_r$ by extending $M_1, \dots ,M_r$.

We now construct $F_{r+1}, \dots ,F_{\gamma n}$. For this, we first prove that the above construction implies that the current `leftover' $H_{r+1}''$
has small maximum degree. Indeed, 
note that if $w \in W_s$, then $d_{H''_{s+1}}(w) = d_{H''_{s}}(w) - 2$.
By~\eqref{eqn:Mi}, for each $ x \in X$, the number of $M_s \in \mathcal{M}$ with $d_{M_s}(x) = 0 $ is 
\begin{align}
r - |\{M_s \in \mathcal{M} : d_{M_s}(x) = 1 \}| 
& \ge 	(2/3-\epszero)\gamma n - (2d_{H'_1}(x)/3 + \epszero \gamma n)  \nonumber \\
& \ge 2\gamma n/3- 2d_{H'}(x)/3  -2 \epszero \gamma n  \nonumber \\
& \ge 	2\gamma n/3- 2/3\cdot \lfloor  3\gamma n /5 \rfloor   -2 \epszero \gamma n  \nonumber \\
& \ge  (4/15 - 2 \epszero)\gamma n > d_{H''}(x) /2 . \nonumber
\end{align}
Hence, we have $d_{H''_{r+1}}(x) <  32 \epszero n$ for all  $ x \in X$
(as we remove $2$ edges at $x$ each time we have $d_{M_s}(x)=0$ and $d_{H''_s}(x) \ge 32\epszero n$).
Note that by definition of $H'$, all but at most one edge in $H''$ must have an endpoint in $X$. 
So for $x \notin X$,  $d_{H''}(x) \le |X|+1 \leq |A_0\cup B_0|+1\le \epszero n+1$.
Therefore, $\Delta(H''_{r+1}) <  32 \epszero n$.

Let $H''':=H_1-(F_1+\dots +F_r) $.
So $H'''$ is the union of $H''_{r+1}$ and all the $M_s$ with $r < s \le \gamma n$.
Since each of $H_1$ and $F_1,\dots,F_r $ contains an even number of edges, $e(H''')$ is even.
In addition, $M_s \subseteq H'''$ for each $r< s \le \gamma n$, so $e(H''') \ge 2(\gamma n-r)$.
By~\eqref{eqn:Mi}, since $\Delta(H''_{r+1}) \le 32 \epszero n$, we deduce that for every vertex $v \in V(H''')$,
we have
\begin{align*}
	d_{H'''} (v) & \le \left( \frac{d_{H'_1}(v)}3+ \epszero\gamma n \right)  + \Delta(H''_{r+1})
\le \frac{3\gamma n/5}{3}+\eps_0 \gamma n + 32\epszero n 
\le \frac{2(\gamma n-r)}3
\end{align*}
In the second inequality, we used that $d_{H'_1}(v)\le d_{H'}(v)$.
Moreover, we have
$$e(H''')=e(H''_{r+1})+e(M_{r+1} + \dots +  M_{\gamma n})\le 32\epszero n^2+ 30\eps_0n (\gamma n-r)\le 62\epszero n^2.
$$
Thus, by Proposition~\ref{prop:evenmatching} applied with $H'''$ and $\gamma-r/n$ playing the roles of $H$ and~$\gamma$,
there exists a decomposition of $H'''$ into $\gamma n-r$ edge-disjoint non-empty even matchings $F_{r+1}$, $\dots$, $F_{\gamma n}$,
each of size at most $3e(H''')/(\gamma n-r)\le \sqrt{\eps_0}n/2$. Thus each such $F_s$ is 
a Hamilton exceptional system candidate with parameter $\eps_0$. This completes the proof.
\endproof

\subsubsection{Step $3$: Constructing the localized exceptional systems}

The next lemma will be used to extend most of the exceptional system candidates guaranteed by Lemma~\ref{lma:BESdecomprelim}
into localized exceptional systems. These extensions are required to be `faithful' in the following sense.
Suppose that $F$ is an exceptional system candidate.%
	\COMMENT{Previsouly, $F$ is an $(i,i')$-ESC, but don't think we need it in the definition.}
Then $J$ is a \emph{faithful extension of} $F$ if the following holds:
\begin{itemize}
\item $J$ contains $F$ and $F[A',B']=J[A',B']$.
\item If $F$ is a Hamilton exceptional system candidate, then $J$ is a
Hamilton exceptional system and the analogue holds if $F$ is a matching exceptional system candidate.
\end{itemize}

\begin{lemma} \label{lma:BESextend2}
Suppose that $0 <  1/n  \ll \epszero \ll   1$, that $0 \le \gamma \le 1$ and that $n, K, m, \gamma n \in \mathbb{N}$.
Let $\mathcal{P}$ be a $(K, m , \epszero)$-partition of a set $V$ of $n$ vertices.
Let $1\le i,i'\le K$.
Suppose that $H$ and $F_1, \dots ,F_{\gamma n}$ are pairwise edge-disjoint graphs which satisfy the following conditions:
\begin{itemize}
\item[\rm (i)] $V(H)=V$ and $H$ contains only $A_0A_i$-edges and $B_0B_{i'}$-edges.
\item[\rm (ii)] Each $F_s$ is an $(i,i')$-ESC with parameter~$\eps_0$.
\item[\rm (iii)] Each $v \in V_0$ satisfies $d_{H+\sum F_s}(v )  \ge (2 \gamma  +\sqrt{\epszero} )n$.
\end{itemize}
Then there exist edge-disjoint $(i,i')$-ES $J_1, \dots ,J_{\gamma n }$ with parameter $\eps_0$ in $H +\sum F_s$ 
such that $J_s$ is a faithful extension of $F_s$
for all $s \le \gamma n$. 
\end{lemma}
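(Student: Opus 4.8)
The plan is to extend each $F_s$ one at a time, greedily, using edges of $H$ to turn each $(i,i')$-ESC into an $(i,i')$-ES. Recall that an ESC $F_s$ fails to be an ES only because some vertices $v\in V_0$ have $d_{F_s}(v)\le 1$ rather than $d_{F_s}(v)=2$; by (ESC2) all other vertices already have degree exactly $1$. So for each $s$ we must add, for every $v\in V_0$ with $d_{F_s}(v)<2$, either one or two edges incident to $v$ so as to raise its degree to exactly $2$, while keeping the graph a path system (no cycles, no vertex of degree $>2$), not using any $A'B'$-edges (so that $F_s[A',B']=J_s[A',B']$ and the faithfulness/(HES)-vs-(MES) dichotomy is preserved — this is automatic since by (i) $H$ contains only $A_0A_i$- and $B_0B_{i'}$-edges, hence no $A'B'$-edges), and keeping all the new edges disjoint from those used for $F_1,\dots,F_{s-1}$ and from the other $F_j$. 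Since $H$ only has $A_0A_i$-edges and $B_0B_{i'}$-edges, a vertex $v\in A_0$ can only receive new edges into $A_i$, and $v\in B_0$ only into $B_{i'}$; in particular all added edges stay inside $V_0\cup A_i\cup B_{i'}$, so the resulting $J_s$ is automatically $(i,i')$-localized.

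The key step is the greedy choice with a degree/counting bound. Process the vertices of $V_0$ in turn. When we come to extend $F_s$ at a vertex $v\in A_0$ (say) needing $t\in\{1,2\}$ more edges, we must pick $t$ neighbours of $v$ in $A_i$ via edges of $H$ that are (a) not yet used in the current $J_s$, (b) not used in any $J_j$ with $j<s$, (c) not endpoints that already have degree $2$ in the partially-built $J_s$ together with the other $F_j$'s restricted appropriately, and (d) chosen so as not to close a cycle — i.e. avoid the at most one vertex that is the other endpoint of the path through $v$. The number of forbidden target vertices in $A_i$ is crudely bounded: at most $\sqrt{\epszero}n$ from the bounded size of the path systems (each $F_j$ is an ESC, so by (ESC3) has at most $2|V_0|+\sqrt{\epszero}n/2\le 2\sqrt{\epszero}n$ vertices in $A$, and there are $\gamma n\le n$ of them — so one instead counts via $H$), plus a bounded number for the cycle-avoidance and for vertices already saturated. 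The crucial input is (iii): $d_{H+\sum F_s}(v)\ge (2\gamma+\sqrt{\epszero})n$ for each $v\in V_0$, so $v$ has at least roughly $\sqrt{\epszero}n$ edges of $H$ available at every stage, since across all $\gamma n$ extensions we add at most $2$ edges of $H$ at $v$ per index $s$, i.e. at most $2\gamma n$ edges of $H$ at $v$ in total, leaving at least $\sqrt{\epszero}n - O(1) > 0$ choices whenever we need one. This is precisely why the bound in (iii) has the shape $2\gamma n + \sqrt{\epszero}n$.

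The main obstacle — and the part requiring the most care — is bookkeeping the global edge-disjointness across all $\gamma n$ graphs $J_1,\dots,J_{\gamma n}$ simultaneously while respecting the per-vertex budget in (iii): one has to argue that the total number of $H$-edges consumed at each $v\in V_0$ over the whole process is at most $2\gamma n$ (each $J_s$ adds $d_{J_s}(v)-d_{F_s}(v)\le 2$ new edges at $v$, and there are $\gamma n$ values of $s$), so that (iii) leaves a genuine surplus of at least $\sqrt{\epszero}n$ throughout. A secondary subtlety is cycle-avoidance: adding an edge at $v$ could in principle join the two ends of an existing path in $J_s$ into a cycle, but since $v$ needs degree raised to at most $2$ and each $F_s$-component through $v$ has a well-defined (at most one) far endpoint, excluding that single vertex suffices, and this costs only an additive constant in the counting above. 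Once these are handled, setting $J_s$ to be $F_s$ together with the chosen 1- or 2-edge attachments at each deficient $V_0$-vertex yields edge-disjoint $(i,i')$-ES that faithfully extend the $F_s$, as required.
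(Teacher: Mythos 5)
Your proposal is correct and follows essentially the same route as the paper: extend the $F_s$ one at a time, noting that $H$ has no $A'B'$-edges so faithfulness is automatic, and using the budget count $d_{H+\sum F_s}(v)-2\gamma n\ge \sqrt{\epszero}\,n$ to guarantee enough free edges of $H$ at each $v\in V_0$ at every stage. The only difference is that the paper delegates the single-step extension (choosing fresh pendant neighbours in $A_i$ resp.\ $B_{i'}$, which you carry out by hand) to the black-box Lemma~\ref{lma:ESextend}.
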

\begin{proof}
For each $s \le \gamma n$ in turn, we extend $F_s$ into an $(i,i')$-ES $J_s$ with parameter $\eps_0$ in $H+\sum F_s$ such that
$J_s$ and $J_{s'}$ are edge-disjoint for all $s' < s$.
Since $H$ does not contain any $A'B'$-edges, the $J_s$ will automatically satisfy $J_s[A',B'] = F_s[A',B']$.
Suppose that for some $1\le s\le \gamma n$ we have already constructed $J_1, \dots ,J_{s-1}$.
Set $H_s := H - \sum_{s' < s} J_{s'}$. 
Consider any $v \in V_0$.
Since $v$ has degree at most 2 in an exceptional system and in an exceptional system candidate, (iii) implies that
\begin{align*}
	d_{H_s}(v) & \ge d_{H+ \sum F_s}(v) - 2 \gamma n \ge \sqrt{\epszero} n.
\end{align*}
Together with~(i) this shows that condition (ii) in Lemma~\ref{lma:ESextend} holds (with $H_s$ playing the role of $G$).
Since $\cP$ is a $(K,m,\eps_0)$-partition of $V$, Lemma~\ref{lma:ESextend}(i) holds too.
Hence we can apply Lemma~\ref{lma:ESextend} to obtain an exceptional system $J_s$ with parameter $\eps_0$ in $H_s + F_s$
such that $J_s$ is a faithful extension of $F_s$. (i) and (ii) ensure that $J_s$ is an $ (i,i')$-ES, as required.
\end{proof}

\subsubsection{Step $4$: Constructing the remaining exceptional systems.}
Due to condition~(iii), Lemma~\ref{lma:BESextend2} cannot be used to extend \emph{all} the exceptional system candidates
returned by Lemma~\ref{lma:BESdecomprelim} into localized exceptional systems.
The next lemma will be used to deal with the remaining exceptional system candidates
(the resulting exceptional systems will not be localized).

\begin{lemma} \label{globalBES}
Suppose that $0 <  1/n  \ll \epszero \ll \eps' \ll \lambda \ll  1$ and that $n, \lambda n \in \mathbb{N}$.
Let $A,A_0,B,B_0$ be a partition of a set $V$ of $n$ vertices such that $|A_0| +|B_0| \le \epszero n$ and $|A| = |B|$.
Suppose that $H,F_1, \dots ,F_{\lambda n}$ are pairwise edge-disjoint graphs  which satisfy the following conditions:
\begin{itemize}
\item[{\rm (i)}] $V(H)=V$ and $H$ contains only $A_0A$-edges and $B_0B$-edges.
\item[{\rm (ii)}] Each $F_s$ is an exceptional system candidate with parameter $\eps_0$.
\item[{\rm (iii)}] For all but at most $\eps' n$ indices $s\le \lambda n$ the graph $F_s$ is either
a matching exceptional system candidate with $e(F_s) = 0$ or a Hamilton exceptional system candidate with $e(F_s) = 2$.
In particular, all but at most $\eps' n$ of the $F_s$ satisfy $d_{F_s}(v) \le 1$ for all $v \in V_0$.
\item[{\rm (iv)}] All $v \in V_0$ satisfy $d_{H+\sum F_s}(v )  = 2 \lambda n$.
\item[{\rm (v)}] All $v \in A \cup B $ satisfy $d_{H+\sum F_s}(v) \le 2 \epszero n$.
\end{itemize}
Then there exists a decomposition of $H+\sum F_s$ into edge-disjoint exceptional 
systems $J_1, \dots ,J_{\lambda n }$ with parameter $\eps_0$
such that $J_s$ is a faithful extension of $F_s$ for all $s \le \lambda n$. 
\end{lemma}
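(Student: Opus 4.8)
The plan is to process the indices $s \le \lambda n$ one at a time, extending each exceptional system candidate $F_s$ into a faithful extension $J_s$ by adding suitable $A_0A$- and $B_0B$-edges drawn from $H$ (and from leftover parts of $H$ once earlier edges have been used), much as in the proof of Lemma~\ref{lma:BESextend2}, but now one must additionally \emph{deplete} all of $H$ by the time all $\lambda n$ extensions are built, and one has no cluster structure to exploit. The main structural observation is the following: by (iii), all but at most $\eps' n$ of the $F_s$ already have $d_{F_s}(v) \le 1$ for every $v \in V_0$, so for such an $F_s$ and each $v \in V_0$ we need to add exactly one edge at $v$ (either an $A_0A$-edge or a $B_0B$-edge, according to which side $v$ lies on) to reach $d_{J_s}(v) = 2$, i.e.\ $J_s - F_s$ restricted to $V_0$ is a perfect matching from $V_0$ into $A \cup B$; for the at most $\eps' n$ `bad' $F_s$ we may need zero, one or two such edges at each $v$, but in total over all $s$ the number of edges added at a fixed $v \in V_0$ must equal $d_{H+\sum F_s}(v) - d_{\sum F_s}(v) = d_H(v)$, which by (iv) is $2\lambda n - d_{\sum F_s}(v)$.

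The key counting step is to check that these local demands are globally consistent, i.e.\ that $\sum_s (2 - d_{F_s}(v)) = d_H(v)$ for every $v \in V_0$, which is immediate from (iv) since $d_{H + \sum F_s}(v) = 2\lambda n$ and each $J_s$ must have $d_{J_s}(v) = 2$. So the task reduces to: decompose $H$ into $\lambda n$ graphs $H_1, \dots, H_{\lambda n}$, where $H_s$ consists only of $A_0A$- and $B_0B$-edges, $d_{H_s}(v) = 2 - d_{F_s}(v)$ for all $v \in V_0$, $d_{H_s}(u) \le 1$ for all $u \in A \cup B$ (so that $F_s + H_s$ is again a path system with the right degree bounds on $V_0$), and the edges of $H_s$ are vertex-disjoint from the edges of $F_s$ on $A \cup B$ — the latter is easy to arrange greedily because by (ESC3)/the definition of ESC, $F_s$ touches at most $2|V_0| + \sqrt{\epszero}n/2$ vertices of $A \cup B$, while by (v) every $u \in A \cup B$ has $d_H(u) \le 2\epszero n$, so there is ample room. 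I would build the $H_s$ by a bipartite-matching / Vizing-type argument on $H[A_0,A]$ and $H[B_0,B]$ separately: since $\Delta(H) \le 2\lambda n$ by (iv) and (v), and since $H[A_0,A]$ is bipartite with all $A_0$-degrees at most $2\lambda n$ and all $A$-degrees at most $2\epszero n \ll \lambda n$, one can repeatedly pull out, for each $s$, a subgraph meeting each $v \in A_0$ in exactly $2 - d_{F_s}(v \cap A_0)$ edges (using that $\sum_s$ of these equals $d_{H[A_0,A]}(v)$) while keeping $A$-degrees at most $1$; the abundance of $A$-vertices relative to the small total degree makes the greedy/augmenting-path choice always succeed. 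Once $H_s$ is fixed, set $J_s := F_s + H_s$ and verify directly: $d_{J_s}(v) = 2$ for $v \in V_0$, $d_{J_s}(u) \le 1$ for $u \in A \cup B$, $e_{J_s}(A) = e_{J_s}(B) = 0$ (since $H$ has no such edges and neither does $F_s$), $J_s[A',B'] = F_s[A',B']$ (since $H$ has no $A'B'$-edges), so $J_s$ is an exceptional system that is a faithful extension of $F_s$ — in particular $J_s$ is an HES iff $F_s$ is, by (HESC)/(MESC) and the preservation of $[A',B']$; and the number of $AB$-paths of $J_s$ is at most $b(F_s) + $ (a negligible amount) $\le \sqrt{\epszero} n$, giving (ES3).

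The main obstacle I anticipate is the bookkeeping to guarantee that the $H_s$ \emph{exactly} exhaust $H$ while simultaneously respecting all the per-$s$ degree prescriptions on $V_0$ and the degree-$\le 1$ constraint on $A \cup B$; in particular, handling the $\eps' n$ exceptional indices $s$ (where $d_{F_s}(v)$ may be $0$ or $2$, so $H_s$ may need $2$ or $0$ edges at $v$) requires a little care to make sure the residual degrees on $V_0$ after removing them are still realizable by the remaining good indices — but this is controlled because $\eps' \ll \lambda$, so the contribution of the bad indices to any $d_H(v)$ is at most $2\eps' n \ll 2\lambda n$, leaving plenty of slack. A clean way to organize it is: first, for each bad $s$, greedily choose $H_s$ (at most $2|V_0|$ edges, easy by (v)); then apply a Vizing/König argument to the remaining graph $H - \sum_{\text{bad }s} H_s$, whose $V_0$-degrees are now $d_H(v) - (\text{bad contribution})$, an even number equal to $2(\lambda n - \eps'n) \pm O(\eps' n)$ — adjust so it is exactly twice the number of remaining good indices, then decompose it into $\lambda n - (\text{\#bad})$ subgraphs each of which is a union of a near-perfect matching on $V_0$ extended trivially, each with $A \cup B$-degree at most $1$.
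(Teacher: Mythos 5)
Your reduction is the right one and matches the paper's: since $H$ has no $A'B'$-edges, everything comes down to splitting $H$ into graphs $H_s$ (the paper's $J'_s$) with $d_{H_s}(v)=2-d_{F_s}(v)$ for every $v\in V_0$, with $d_{H_s}(u)\le 1$ on $A\cup B$ and with $H_s$ avoiding the vertices of $A\cup B$ already used by $F_s$; and your consistency check $\sum_s(2-d_{F_s}(v))=d_H(v)$ from (iv) is exactly the identity the paper relies on. The gap is in how you actually produce the $H_s$: this is the entire content of the proof, and neither of the two mechanisms you sketch delivers it. Processing the indices $s$ one at a time and choosing $H_s$ greedily can get stuck at the end: when only a few indices remain, the residual degree of some $v\in V_0$ in $H$ is as small as $2-d_{F_s}(v)$, and nothing prevents all of the remaining $H$-neighbours of $v$ from lying in $V(F_s)\cap A$ (a set of size up to $\sqrt{\epszero}n$, which is not small compared to $1$ or $2$); the ``ample room'' estimate $d_H(u)\le 2\epszero n$ controls the $A\cup B$ side but says nothing about this terminal obstruction at $V_0$. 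Your alternative --- K\"onig/Vizing on $H[A_0,A]$ and $H[B_0,B]$ followed by pairing matchings --- also does not work as stated, because the $H_s$ are not (near-)matchings on $V_0$: for the many indices with $e(F_s)=0$ you need $d_{H_s}(v)=2$ for \emph{every} $v\in V_0$, and two matchings produced by an edge-colouring will in general miss different vertices of $V_0$, so their union need not realize the prescribed degree sequence $(2-d_{F_s}(v))_{v\in V_0}$ exactly, nor respect the disjointness from $V(F_s)$.

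The paper resolves this by reversing the order of the two loops: it processes the exceptional vertices $v_i\in V_0$ one at a time, and for each $v_i$ distributes \emph{all} edges of $H$ at $v_i$ among the $\lambda n$ candidates simultaneously via a perfect matching in an auxiliary bipartite graph $Q_i$, whose classes are $N_H(v_i)$ and a multiset containing $2-d_{F_s}(v_i)$ copies of each $F_s$, with adjacency encoding ``this neighbour is not yet used by $F_s$ plus its previously assigned edges''. Condition (iv) gives $|V_1|=|V_2|$, condition (v) and (ESC3) give the two sides of Hall's condition, and the perfect matching guarantees the exact degree prescription at $v_i$ with no terminal obstruction. If you replace your greedy/K\"onig step by this per-vertex Hall argument (or some equivalent global flow/defect-version argument), the rest of your verification --- that $J_s=F_s+H_s$ is a path system, that $J_s[A',B']=F_s[A',B']$, and that (HES)/(MES) and (ES3) are inherited --- goes through as you describe.
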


\begin{proof}
Let $V_0:=A_0 \cup B_0$ and let $v_1, \dots ,v_{|V_0|}$ denote the vertices of $V_0$. 
We will decompose $H$ into graphs $J'_s$ in such a way that
the graphs $J_s:=J'_s + F_s$ satisfy $d_{J_s}(v_i) = 2$ for all $i\le |V_0|$ and $d_{J_s}(v) \le 1$ for all $v \in A \cup B$.
Hence each $J_s$ will be an exceptional system with parameter~$\eps_0$.%
   \COMMENT{Each $J_s$ must be a path system since a cycle would have to lie in $J_s[V_0]$ and so by (i) already in $F_s[V_0]$.}
Condition (i) guarantees that $J_s$ will be a faithful extension of~$F_s$.
Moreover, the $J_s$ will form a decomposition of $H+\sum F_s$.
We construct the decomposition of $H$ by considering each vertex $v_i$ of $A_0 \cup B_0$ in turn.

Initially, we set $V(J'_s)=E(J'_s) = \emptyset$ for all $s \le \lambda n$.
Suppose that for some $1\le i\le |V_0|$ we have already assigned (and added) all the edges of $H$
incident with each of $v_1, \dots, v_{i-1}$ to the $J'_s$.
Consider $v_i$. Without loss of generality assume that $v_i \in A_0$. Note that $N_H(v_i)\subseteq A$ by~(i).
Define an auxiliary bipartite graph $Q_i$ with vertex classes $V_1$ and $V_2$ as follows:
$V_1:=N_{H} (v_i)$ and $V_2$ consists of $2-d_{F_s}(v_i)$ copies of $F_s$ for each $ s\le \lambda n $. 
Moreover, $Q_i$ contains an edge between $v \in V_1$ and $F_s \in V_2$ if and only if $v\notin V(F_s + J'_s)$.%
    \COMMENT{We can write this in this way since by def an exceptional system candidate does not contain
isolated vertices in $A\cup B$. If we change the def, then we here have to write that $v$ is not incident to an
edge of $F_s$.}

We now show that $Q_i$ contains a perfect matching. For this, 
note that $|V_1|=2\lambda n - d_{\sum F_s} (v_i)= |V_2|$ by~(iv). 
(v) implies that for each $v \in V_1 \subseteq A$ we have  $d_{\sum (F_s+J'_s)}(v) \le d_{H+\sum F_s}(v) \le 2 \epszero n$.
So $v$ lies in at most $2 \epszero n$ of the graphs $F_s+J'_s$.
Therefore, $d_{Q_i} (v) \geq |V_2| -4 \eps _0n \geq |V_2|/2$ for all $v \in V_1$. (The final
inequality follows since (iii) and (iv) together imply that
$d_H(v_i)=2\lambda n-d_{\sum F_s} (v_i)\ge 2\lambda n -(\lambda n-\eps' n)-2\eps' n\ge \lambda n/2$%
    \COMMENT{The $2\eps' n$ comes from the fact that $d_{F_s}(v_i)\le 2$ for every exceptional system candidate $F_s$.} 
and so $|V_2|=|V_1|\geq \lambda n/2$.)
On the other hand, since each $F_s+J'_s$ is an exceptional system candidate with parameter $\eps_0$, (ESC3) implies that
$|V(F_s+J'_s)\cap A| \le (\sqrt{\epszero}/2  +2 \epszero) n \le \sqrt{\epszero} n$ for each $F_s \in V_2$.
Therefore $d_{Q_i} (F_s) \geq |V_1| -|V(F_s+J'_s)\cap A| \geq |V_1|/2$ for each $F_s\in V_2$.
Thus we can apply Hall's theorem to find a perfect matching $M$ in $Q_i$.
Whenever $M$ contains an edge between $v$ and $F_s$, we add the edge $v_iv$ to $J'_s$.
This completes the desired assignment of the edges of $H$ at $v_i$ to the $J'_s$.
\end{proof}

\subsubsection{Proof of Lemma~\ref{lma:BESdecom}}

In our proof of Lemma~\ref{lma:BESdecom} we will use the following result, which is a consequence of
Lemmas~\ref{lma:BESextend2} and~\ref{globalBES}. 
Given a suitable set of exceptional system candidates in an exceptional scheme, the lemma extends these into exceptional systems which form a
decomposition of the exceptional scheme.
We prove the lemma in a slightly more general form than needed for the current case, 
as we will also use it 
in the other two cases.

\begin{lemma} \label{BEScons}
Suppose that $0 <  1/n  \ll \epszero \ll \eps\ll \eps' \ll \lambda, 1/K \ll  1$, that $1/(7K^2)\le \alpha < 1/K^2$
and that $n, K,m,\alpha n,\lambda n/K^2  \in \mathbb{N}$.
Let $$\gamma:=\alpha -\frac{\lambda}{K^2}\ \ \ \ \ \ \ \ \text{and} \ \ \ \ \ \ \ \ \gamma':=\frac{\lambda}{K^2}.$$
Suppose that the following conditions hold:
\begin{itemize}
\item[{\rm (i)}] $(G^*,\cP)$ is a $(K,m,\eps_0,\eps)$-exceptional scheme with $|G^*|=n$.
\item[{\rm (ii)}] $G^*$ is the edge-disjoint union of $H(i,i')$, $F_1(i,i'),\dots,F_{\gamma n}(i,i')$ and $F'_1(i,i'),\dots,F'_{\gamma' n}(i,i')$
over all $1\le i,i'\le K$. 
\item[{\rm (iii)}] Each $H(i,i')$ contains only $A_0A_i$-edges and $B_0B_{i'}$-edges.
\item[{\rm (iv)}] Each $F_s(i,i')$ is an $(i,i')$-ESC with parameter $\eps_0$.
\item[{\rm (v)}] Each $F'_s(i,i')$ is an exceptional system candidate with parameter $\eps_0$.
Moreover, for all but at most $\eps' n$ indices $s\le \gamma' n$ the graph $F'_s(i,i')$ is either a
matching exceptional system candidate with $e(F'_s(i,i'))=0$ or a Hamilton exceptional system candidate with $e(F'_s(i,i'))=2$.
\item[{\rm (vi)}] $d_{G^*}(v)=2K^2\alpha n$ for all $v\in V_0$.
\item[{\rm (vii)}] For all $1\le i,i'\le K$ let $G^*(i,i'):=H(i,i')+\sum_{s\le \gamma n} F_s(i,i')+\sum_{s\le \gamma' n} F'_s(i,i')$.
Then $d_{G^*(i,i')}(v)=(2\alpha \pm \eps')n$ for all $v\in V_0$.
\end{itemize}
Then $G^*$ has a decomposition into $K^2\alpha n$ edge-disjoint exceptional systems
$$J_1(i,i'),\dots,J_{\gamma n}(i,i') \ \ \ \ \ \ \text{and} \ \ \ \ \ \ J'_1(i,i'),\dots,J'_{\gamma' n}(i,i')$$
with parameter $\eps_0$, where $1\le i,i'\le K$, such that $J_s(i,i')$ is an $(i,i')$-ES which is a
faithful extension of $F_s(i,i')$ for all $s \le \gamma n$
and $J'_s(i,i')$ is a faithful extension of $F'_s(i,i')$ for all $s \le \gamma' n$.%
\COMMENT{Added $J'_s(i,i')$ is a faithful extension of $F'_s(i,i')$ instead of $F'_s(i,i') \subseteq J'_s(i,i')$.}
\end{lemma}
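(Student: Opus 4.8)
The plan is to reduce Lemma~\ref{BEScons} to two applications of the extension lemmas proved above, namely Lemma~\ref{lma:BESextend2} for the localized part and Lemma~\ref{globalBES} for the (possibly) non-localized part, after first splitting the edges of each $H(i,i')$ appropriately between these two parts. The key point is that conditions (i)--(vii) are engineered precisely so that the hypotheses of these two lemmas can be checked: (iii) gives the structural restriction on $H(i,i')$, (iv) and (v) give that the $F_s(i,i')$ and $F'_s(i,i')$ are exceptional system candidates of the right type, and (vi)--(vii) control degrees at $V_0$, which is where both extension lemmas need room to manoeuvre.

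First I would, for each pair $(i,i')$, split $H(i,i')$ into two edge-disjoint subgraphs $H_1(i,i')$ and $H_2(i,i')$. The graph $H_1(i,i')$ should carry enough degree at each $v\in V_0$ to extend the $\gamma n$ candidates $F_s(i,i')$ into $(i,i')$-ES via Lemma~\ref{lma:BESextend2}: for that lemma I need $d_{H_1(i,i')+\sum_s F_s(i,i')}(v)\ge (2\gamma+\sqrt{\epszero})n$ for all $v\in V_0$. The remaining graph $H_2(i,i')$ together with $F'_1(i,i'),\dots,F'_{\gamma'n}(i,i')$ will be fed into Lemma~\ref{globalBES} (with $\lambda$ there replaced by $\gamma' K^2$, or rather working cluster-pair by cluster-pair and then combining, so that the total number of non-localized systems is $K^2\gamma' n=\lambda n$). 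Since $d_{G^*(i,i')}(v)=(2\alpha\pm\eps')n$ and $2\alpha=2\gamma+2\gamma'$, after removing the degree used by the $F_s(i,i')$-extensions roughly $2\gamma' n$ degree remains at each $v\in V_0$ inside $H_2+\sum_s F'_s$, which is exactly what Lemma~\ref{globalBES}(iv) asks for; a small amount of edge-shuffling (moving $O(\eps' n)$ edges of the $H(i,i')$ between $H_1$ and $H_2$, or between different pairs) makes the degrees exactly right. Condition (v) of the present lemma directly supplies condition (iii) of Lemma~\ref{globalBES} (all but $\eps' n$ of the $F'_s$ are trivial MESC or two-edge HESC), and (ESch3) together with the bipartite-like structure gives condition (v) of Lemma~\ref{globalBES}, namely $d(v)\le 2\epszero n$ for $v\in A\cup B$.

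Concretely the steps are: (1) fix the hierarchy and record the identities $2\alpha=2\gamma+2\gamma'$, $\gamma'=\lambda/K^2$, $1/(7K^2)\le\alpha<1/K^2$; (2) for each $(i,i')$ partition the $A_0A_i$- and $B_0B_{i'}$-edges of $H(i,i')$ into $H_1(i,i')$ and $H_2(i,i')$ so that every $v\in V_0$ has $d_{H_1(i,i')+\sum_s F_s(i,i')}(v)$ equal to exactly $\lceil(2\gamma+\sqrt{\epszero})n\rceil$ or as close as the parity/integrality allows — this is possible by (vii) and a greedy/defect-version argument since $H(i,i')$ only touches $V_0$ at vertices of $V_0$ and $d_{G^*(i,i')}(v)=(2\alpha\pm\eps')n$ comfortably exceeds $(2\gamma+\sqrt{\epszero})n+2\gamma' n$; (3) apply Lemma~\ref{lma:BESextend2} to $H_1(i,i')$ and $F_1(i,i'),\dots,F_{\gamma n}(i,i')$ to obtain edge-disjoint $(i,i')$-ES $J_1(i,i'),\dots,J_{\gamma n}(i,i')$, each a faithful extension; (4) collect $H^{\flat}:=\sum_{i,i'}(H_2(i,i')-\sum_s J_s(i,i'))$ — note all edges of $H_2(i,i')$ that were not used survive, and in fact by (iii) and the fact that $J_s(i,i')$ uses only $A_0A_i$- and $B_0B_{i'}$-edges outside $F_s(i,i')$, the leftover is again a union of $A_0A$- and $B_0B$-edges — and together with all the $F'_s(i,i')$ apply Lemma~\ref{globalBES} (checking its hypotheses (i)--(v) as above, using (v) and (ESch3) of the present setup) to obtain the $J'_s(i,i')$; (5) observe that $\gamma n+\gamma' n=\alpha n$ per pair, so the total is $K^2\alpha n$ exceptional systems, they are pairwise edge-disjoint, and by construction they cover exactly $E(G^*)$, giving the decomposition.

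The main obstacle I expect is Step~(2): arranging the splitting of the $H(i,i')$ so that \emph{simultaneously} Lemma~\ref{lma:BESextend2}(iii) holds for $H_1(i,i')$ \emph{and} Lemma~\ref{globalBES}(iv) holds (i.e.\ $d_{H_2(i,i')+\sum_s F'_s(i,i')}(v)=2\gamma' n$ \emph{exactly} for all $v\in V_0$) — the latter is an exact-degree condition, so one cannot simply be wasteful. This will require tracking the degree contributions of the $F_s(i,i')$ and $F'_s(i,i')$ at each $v\in V_0$ precisely, using (vi)--(vii) and the fact that each exceptional system candidate contributes at most $2$ to the degree of any $v\in V_0$, and then choosing the split of the $H(i,i')$-edges via a Hall-type or flow argument (or a direct greedy argument, since each relevant $H(i,i')$-edge has one endpoint in $V_0$ and the other in a fixed cluster) to hit the target degrees on the nose; the slack of order $\eps' n$ in (vii) is what makes this feasible, and one may need to move $O(\eps' n)$ edges of the $H(i,i')$ across cluster-pairs, which is harmless for the structural conditions since such edges are still $A_0A$- or $B_0B$-edges. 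Everything else is bookkeeping: verifying the two sets of hypotheses and checking that faithful extensions compose to a decomposition.
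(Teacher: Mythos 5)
Your overall skeleton (first extend the localized candidates via Lemma~\ref{lma:BESextend2}, then feed the leftover together with the $F'_s(i,i')$ into Lemma~\ref{globalBES}) is exactly the paper's strategy, but the splitting step you build the proof around is both unnecessary and, as described, breaks the decomposition. The point you are missing is that Lemma~\ref{lma:BESextend2}(iii) is only a \emph{lower} bound on degrees, so one can apply it to the \emph{whole} of $H(i,i')$: by (vii) and (v), $d_{H(i,i')+\sum_s F_s(i,i')}(v)=d_{G^*(i,i')}(v)-d_{\sum_s F'_s(i,i')}(v)\ge(2\alpha-\eps')n-(\gamma'+\eps')n=(2\gamma+\gamma'-2\eps')n\ge(2\gamma+\sqrt{\eps_0})n$. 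There is then no exact-degree condition to engineer for the second application: since each $J_s(i,i')$ is an exceptional system and hence has degree exactly $2$ at every $v\in V_0$, the leftover $H_0:=\sum_{i,i'}\bigl(H(i,i')-\sum_s J_s(i,i')\bigr)$ automatically satisfies $d_{H_0+\sum F'_s}(v)=d_{G^*}(v)-2K^2\gamma n=2\lambda n$ exactly, by (vi). So the ``main obstacle'' you identify (hitting $2\gamma'n$ on the nose via a Hall-type or flow argument) simply does not arise; (vi) does all the work.

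As written, your Step (2)--(4) would also fail to yield a decomposition of $G^*$. You insist that $d_{H_1(i,i')+\sum_s F_s(i,i')}(v)$ equal roughly $(2\gamma+\sqrt{\eps_0})n$, but the $\gamma n$ systems $J_s(i,i')$ consume exactly $2\gamma n$ of that degree at each $v\in V_0$, so about $\sqrt{\eps_0}\,n$ worth of edges of $H_1(i,i')$ at each exceptional vertex remain unused. Your leftover $H^{\flat}:=\sum_{i,i'}(H_2(i,i')-\sum_s J_s(i,i'))$ equals $\sum_{i,i'}H_2(i,i')$ (the $J_s$ are edge-disjoint from $H_2$) and omits this surplus of $H_1$, so those edges are covered by no $J_s$ and no $J'_s$. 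Repairing this forces you to take the leftover to be all of $H(i,i')-\sum_s J_s(i,i')$, at which point the split into $H_1,H_2$ is vacuous and you recover the paper's argument. Two smaller points: the paper runs Lemma~\ref{globalBES} once globally on $H_0$ with all $K^2\gamma'n=\lambda n$ candidates $F'_s(i,i')$ relabelled (its condition (iii) then holds with $K^2\eps'$ in place of $\eps'$), rather than cluster-pair by cluster-pair; and condition (v) of Lemma~\ref{globalBES} is verified from (iii), (i) and (ESch3) via $d_{H_0+\sum F'_s}(v)\le d_{G^*}(v,A_0)+d_{G^*}(v,B')\le 2\eps_0n$ for $v\in A$, as you correctly anticipated.
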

\begin{proof}
Fix any $i,i'\le K$ and set $H:=H(i,i')$ and $F_s:=F_s(i,i')$ for all $s\le \gamma n$.
Our first aim is to apply Lemma~\ref{lma:BESextend2} in order to extend each of $F_1, \dots , F_{\gamma n}$
into a $(i,i')$-HES. (iii) and (iv) ensure that conditions~(i) and~(ii) of Lemma~\ref{lma:BESextend2} hold. To verify Lemma~\ref{lma:BESextend2}(iii),
note that by (v) and (vii) each $v\in V_0$ satisfies
\begin{align*}
d_{H + \sum F_s}(v) & = d_{G^*(i,i')}(v)-d_{\sum_s F'_s(i,i')}(v)\ge (2\alpha-\eps')n-(\gamma'-\eps')n-2\eps' n\\
& =(2\alpha-\gamma'-2\eps') n\ge (2\gamma+\sqrt{\eps_0})n.
\end{align*}
(Here the first inequality follows since (v) implies that $d_{F'_s(i,i')}(v)\le 1$ for all but at most $\eps' n$ indices $s\le \gamma' n$.)
Thus we can indeed apply Lemma~\ref{lma:BESextend2} to find edge-disjoint $(i,i')$-ES $J_1(i,i'), \dots ,J_{\gamma n}(i,i')$ with parameter $\eps_0$
in $H + \sum F_s$ such that $J_s(i,i')$ is a faithful extension of $F_s$ for all $s \le \gamma n$.
We repeat this procedure for all $1\le i,i' \le K$ to obtain $K^2\gamma n$
edge-disjoint (localized) exceptional systems. 

Our next aim is to apply Lemma~\ref{globalBES} in order to construct the $J'_s(i,i')$.
Let $H_0$ be the union of $H(i,i')-(J_1(i,i')+\dots+J_{\gamma n}(i,i'))$ over all $i,i'\le K$.
Relabel the $F'_s(i,i')$ (for all $s\le \gamma' n$ and all $i,i'\le K$) to obtain exceptional system candidates $F'_1,\dots,F'_{\lambda n}$.
Note that by~(vi) each $v\in V_0$ satisfies
\begin{equation}\label{eq:degH0}
d_{H_0+\sum F'_s}(v) =d_{G^*}(v)-2K^2 \gamma n= 2K^2\alpha n-2K^2 \gamma n=2\lambda n.
\end{equation}
Thus condition~(iv) of Lemma~\ref{globalBES} holds with $H_0, F'_s$ playing the roles of $H,F_s$.
(iii) and (v) imply that conditions (i)--(iii) of Lemma~\ref{globalBES} hold with $K^2 \eps'$ playing the role of $\eps'$.
To verify Lemma~\ref{globalBES}(v), note that
each $v\in A$ satisfies $d_{H_0+\sum F'_s}(v)\le d_{G^*}(v,A_0)+d_{G^*}(v,B')\le 2\eps_0 n$ by~(iii), (i) and (ESch3).
Similarly each $v\in B$ satisfies $d_{H_0+\sum F'_s}(v)\le 2\eps_0 n$. Thus we can apply
Lemma~\ref{globalBES} with $H_0, F'_s, K^2\eps'$ playing the roles of $H, F_s, \eps'$ to obtain
a decomposition of $H_0+\sum_s F'_s$ into
$\lambda n$ edge-disjoint exceptional systems $J'_1,\dots,J'_{\lambda n}$ with parameter $\eps_0$ such that
$J'_s$ is a faithful extension of $F'_s$ for all $s\le \lambda n$.
Recall that each $F'_s$ is a $F'_{s'}(i,i')$ for some $i,i'\le K$ and some $s'\le \gamma' n$.
Let $J'_{s'}(i,i'):=J'_s$. Then all the $J_s(i,i')$ and all the $J'_s(i,i')$ are as required in the lemma. 
\end{proof}

We will now combine Lemmas~\ref{lma:move},~\ref{lma:BESdecomprelim} and~\ref{BEScons} in order
to prove Lemma~\ref{lma:BESdecom}.

\removelastskip\penalty55\medskip\noindent{\bf Proof of Lemma~\ref{lma:BESdecom}. }
Let $G^{\diamond}$ be as defined in Lemma~\ref{lma:BESdecom}(iv).
Choose a new constant $\eps'$ such that $\eps \ll \eps' \ll \lambda, 1/K$.
Set 
\begin{align} \label{alpha'}
2 \alpha n & := \frac{ D - \phi n }{K^2}, 
& \gamma_1 & : = \alpha -  \frac{2\lambda}{K^2}
& \text{and} &
& \gamma'_1 & : = \frac{2\lambda }{K^2}.
\end{align}
Similarly as in the proof of Lemma~\ref{lma:move}, since $\phi  \ll 1/3\le D/n$, we have
\begin{align} \label{alphahier2}
	\alpha \ge 1/(7K^2), \ \ \ \ \ (1-14\lambda)\alpha\le \gamma_1 <\alpha \ \ \ \ \  \text{and} \ \ \ \ \ \eps \ll \eps'  \ll \lambda, 1/K,\alpha, \gamma_1 \ll 1.
\end{align}
Apply Lemma~\ref{lma:move} with $\gamma_1$ playing the role of $\gamma$ in order to obtain a decomposition
of $G^{\diamond}$ into edge-disjoint spanning subgraphs $H(i,i')$ and $H''(i,i')$ (for all $1\le i,i' \le K$) which satisfy the following
properties, where $G'(i,i'):=H(i,i')+H''(i,i')$:
\begin{itemize}
\item[\rm (b$_1$)] Each $H(i,i')$ contains only $A_0A_i$-edges and $B_0B_{i'}$-edges.
\item[\rm (b$_2$)] $H''(i,i')\subseteq G^{\diamond}[A',B']$. Moreover,
all but at most $\eps' n$ edges of $H''(i,i')$ lie in $G^{\diamond}[A_0 \cup A_i, B_0 \cup B_{i'}]$.
\item[\rm (b$_3$)] $e(H''(i,i'))$ is even and $ 2 \alpha n \le e(H''(i,i')) \le 11\epszero n^2/(10K^2)$.
\item[\rm (b$_4$)] $\Delta(H''(i,i')) \le 31 \alpha n/30 $.
\item[\rm (b$_5$)] $d_{G'(i,i')}(v )  =  \left( 2 \alpha \pm   \eps' \right) n $ for all $v \in V_0$.
\item[\rm (b$_6$)] Let $\widetilde{H}$ any spanning subgraph of $H''(i,i')$ which maximises $e(\widetilde{H})$
under the constraints that $\Delta(\widetilde{H}) \le 3\gamma_1 n /5$, $H''(i,i')[A_0,B_0] \subseteq \widetilde{H}$ and $e(\widetilde{H})$ is even.
Then $e(\widetilde{H}) \ge 2 \alpha n $.
\end{itemize}
Fix any $1 \le i,i' \le K$.%
\COMMENT{AL: added $1 \le$}
 Set $H := H(i,i')$ and $H'' := H''(i,i')$. Our next aim is to 
decompose $H''$ into suitable `localized' Hamilton exceptional system candidates. For this, we will
apply 
Lemma~\ref{lma:BESdecomprelim} with $H'', \gamma_1,\gamma'_1$ playing the roles of $H, \gamma,\gamma'$.
Note that $\Delta(H'') \le 31 \alpha n/30 \le 16\gamma_1 n/15$ by (b$_4$) and (\ref{alphahier2}).
Moreover, $\Delta(H''[A,B])\le \Delta(G^\diamond[A,B])\le \eps_0 n$ by~(iv) and (ESch3). Since $e(H'')$ is even by (b$_3$), it follows that
condition~(i) of Lemma~\ref{lma:BESdecomprelim} holds. Condition~(ii) of Lemma~\ref{lma:BESdecomprelim}
follows from (b$_6$) and the fact that any $\widetilde{H}$ as in (b$_6$) satisfies
$e(\widetilde{H})\le e(H'') \le 11\eps_0 n^2/(10K^2)\le 10\eps_0\gamma_1 n^2$
(the last inequality follows from (\ref{alphahier2})). Thus we can indeed apply Lemma~\ref{lma:BESdecomprelim}
in order to decompose $H''$ into $ \alpha n $ edge-disjoint Hamilton exceptional system candidates $F_1, \dots , F_{\gamma_1 n}, F'_1, \dots , F'_{\gamma_1 ' n}$
with parameter $\eps_0$ such that $e(F'_s) =2$ for all $s \le \gamma_1' n$. 
Next we set
\begin{align*}
\gamma_2 & := \alpha - \frac{\lambda}{K^2} & 
& \text{and} &
\gamma'_2 & : = \frac{\lambda}{K^2}.
\end{align*}
Condition (b$_2$) ensures that by relabeling the $F_s$'s and $F'_s$'s we obtain $ \alpha n $ edge-disjoint Hamilton exceptional system
candidates $F_1(i,i'), \dots , F_{\gamma_2 n}(i,i'), F'_1(i,i'), \dots , F'_{\gamma_2 ' n}(i,i')$ with parameter $\eps_0$%
     \COMMENT{Note here that $\gamma_1$ and $\gamma_1'$ are replaced with $\gamma_2$ and $\gamma_2'$ respectively.}
such that properties~(a$'$) and (b$'$) hold:
\begin{itemize}
	\item[(a$'$)] $F_s(i,i')$ is an $(i,i')$-HESC for every $s \le \gamma_2 n$. Moreover, at least $\gamma_2 ' n$ of the $F_s(i,i')$ satisfy $e(F_s(i,i')) = 2$.
	\item[(b$'$)] $e(F'_s(i,i')) = 2$ for all but at most $ \eps' n$ of the $F'_s(i,i')$.
\end{itemize}
Indeed, we can achieve this by relabeling each $F_s$ which is a subgraph of $G^{\diamond}[A_0 \cup A_i, B_0 \cup B_{i'}]$
as one of the $F_{s'}(i,i')$ and each $F_s$ for which is not the case as one of the $F'_{s'}(i,i')$.

Our next aim is to apply Lemma~\ref{BEScons} with $G^\diamond,\gamma_2,\gamma_2'$ playing the roles of $G^*,\gamma,\gamma'$.
Clearly conditions (i) and (ii) of Lemma~\ref{BEScons} hold. (iii) follows from (b$_1$). (iv) and (v) follow from (a$'$) and (b$'$).
(vi) follows from Lemma~\ref{lma:BESdecom}(i),(iii). Finally, (vii) follows from (b$_5$) since $G'(i,i')$ plays the role of $G^*(i,i')$.
Thus we can indeed apply Lemma~\ref{BEScons} to obtain a decomposition of $G^\diamond$ into $K^2\alpha n$
edge-disjoint Hamilton exceptional systems $J_1(i,i'),\dots,J_{\gamma_2 n}(i,i')$ and $J'_1(i,i'),\dots,J'_{\gamma'_2 n}(i,i')$
with parameter $\eps_0$, where $1\le i,i'\le K$, such that $J_s(i,i')$ is an $(i,i')$-HES which is a faithful extension of  $F_s(i,i')$ for all $s \le \gamma_2 n$
and $J'_s(i,i')$ is a faithful extension of $F'_s(i,i')$ for all $s \le \gamma'_2 n$.
Then the set $\mathcal{J}$ of all these Hamilton exceptional systems is as required in Lemma~\ref{lma:BESdecom}.
\endproof


\subsection{Critical case with $e(A',B') \ge D$} \label{sec:critical}

The aim of this section is to prove the following analogue of Lemma~\ref{lma:BESdecom} for the case when $G$ is critical and $e_G(A',B') \ge D$.
For this, recall that $G$ is critical if $\Delta(G[A',B']) \ge 11 D/40$ and $e(H) \le 41 D/40$ for all subgraphs $H$ of $G[A',B']$ such that $\Delta(H) \le 11 D/40$.
By Lemma~\ref{critical}(ii) we know that in this case $D = (n-1)/2$ or $D = n/2-1$.

\begin{lemma}\label{lma:BESdecomcritical}
Suppose that $0 <  1/n  \ll \epszero \ll  \eps \ll   \lambda, 1/K \ll 1$, that $D\ge n - 2\lfloor n/4 \rfloor -1$,%
    \COMMENT{Daniela: added the bound on $D$, which we need to apply Lemma~\ref{critical}}
that $0\le \phi\ll 1$ and that
$n, K, m, \lambda n/K^2, (D - \phi n)/(400K^2) \in \mathbb{N}$.%
    \COMMENT{Previously also had that $\phi n \in \mathbb{N}$. But this follows since $D,K, (D - \phi n)/(400K^2) \in \mathbb{N}$.}
Suppose that the following conditions hold:
\begin{itemize}
	\item[{\rm(i)}] $G$ is a $D$-regular graph on $n$ vertices.
	\item[{\rm(ii)}] $\mathcal{P}$ is a $(K, m, \epszero)$-partition of $V(G)$ such that%
\COMMENT{Previously this also included
$d_{G[A',B']}(v)\le \eps_0 n \text{ for all } v\in A\cup B.$ However, we can replaced this by the bound on $d_{G^\diamond[A',B']}(v)$ implied by~(iv).} 
$e_G(A',B') \ge D$ and $\Delta(G[A',B']) \le D/2$.
Furthermore, $G$ is critical. In particular, $e_G(A',B')< n$ and $D = (n-1)/2$ or $D= n/2-1$ by Lemma~\ref{critical}(ii) and~(iii).%
	\COMMENT{Also $n = 1 \pmod4$ or $n = 0 \pmod4$ respectively.}
	\item[{\rm(iii)}] $G_0$ is a subgraph of $G$ such that $G[A_0]+G[B_0] \subseteq G_0$, $e_{G_0}(A',B') \leq \phi n$ and $d_{G_0}(v) = \phi n $ for all $v \in V_0$.
	\item[{\rm(iv)}] Let $G^{\diamond} := G - G[A] - G[B] -G_0$. $e_{G^\diamond} (A',B')$ is even and
$(G^{\diamond}, \mathcal{P})$ is a $(K, m, \epszero,\eps)$-exceptional scheme.
	\item[{\rm(v)}] Let $w_1$ and $w_2$ be (fixed) vertices such that $d_{G[A',B']}(w_1) \ge d_{G[A',B']}(w_2) \ge d_{G[A',B']}(v)$
for all $v \in V(G) \setminus\{w_1,w_2\}$.
Suppose that
\begin{equation} \label{additional}
d_{G^{\diamond}[A',B']}(w_1), d_{G^{\diamond}[A',B']}(w_2) \le (D-\phi n)/2.
\end{equation}
\end{itemize}
Then there exists a set $\mathcal{J}$ consisting of $(D-\phi n)/2$ edge-disjoint Hamilton exceptional systems with parameter $\eps_0$ in $G^{\diamond}$
which satisfies the following properties: 
\begin{itemize}
    \item[\rm (a)] Together the Hamilton exceptional systems in $\mathcal{J}$ cover all edges of $G^\diamond$.
	\item[\rm (b)] For each $1\le i,i' \le K$, the set $\mathcal{J}$ contains $(D - (\phi + 2 \lambda) n)/(2K^2)$ $(i,i')$-HES.
	Moreover, $\lambda n /K^2$ of these $(i,i')$-HES are such that
\begin{itemize}
\item[\rm (b$_1$)] $e_J(A',B') =2$ and 
\item[\rm (b$_2$)] $d_{J[A',B']}(w) = 1$ for all $w \in \{w_1,w_2\}$ with $d_{G[A',B']}(w) \ge 11D/40$.%
    \COMMENT{i.e. $w \in W' \cap \{w_1,w_2\}$}
\end{itemize}
\end{itemize}
\end{lemma}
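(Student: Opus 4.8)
\emph{Plan.} The idea is to run the four-step strategy of the proof of Lemma~\ref{lma:BESdecom}, replacing the two places where non-criticality was used --- Lemma~\ref{lma:move} in Step~1 and Lemma~\ref{lma:BESdecomprelim} in Step~2 --- by arguments adapted to the critical structure. First I would apply Lemma~\ref{critical} to obtain the hub sets $W \subseteq W'$ with $1 \le |W'| \le 3$. Note that (ESch3) together with hypothesis~(iii) forces $W' \subseteq V_0$, since every $v \in A \cup B$ has $d_{G[A',B']}(v) \le (\epszero+\phi)n$, which is far smaller than the degree $\ge 21D/80$ of any vertex of $W'$. Set $2\alpha n := (D-\phi n)/K^2$ and define $\gamma_1,\gamma_1',\gamma_2,\gamma_2'$ exactly as in the proof of Lemma~\ref{lma:BESdecom}. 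Recall from Lemma~\ref{critical}(ii),(iii) that $e_G(A',B') < n$ and $D \in \{(n-1)/2,\,n/2-1\}$, so by~(iv) the quantity $e_{G^\diamond}(A',B')$ is even and lies in $[2K^2\alpha n,\,n)$; moreover hypothesis~(v) gives $d_{G^\diamond[A',B']}(w_1), d_{G^\diamond[A',B']}(w_2) \le K^2\alpha n$, while $d_{G^\diamond[A',B']}(w) \ge 11D/40 - \phi n \gg \lambda n$ for every $w \in W \cap \{w_1,w_2\}$, and $d_{G^\diamond[A',B']}(w_3) \le d_{G[A',B']}(w_2)$ if $|W'|=3$.

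\emph{Step 1.} As in Step~1 of Lemma~\ref{lma:BESdecom}, apply Lemma~\ref{lma:randomslice} to $G^\diamond$ (using the version that spreads each vertex's edges evenly across the bins) to obtain localized subgraphs $H(i,i')$ and $H'(i,i')$. Since $G$ is critical we cannot invoke Lemma~\ref{lma:move}; but $e_{G^\diamond}(A',B') < n$, so each $H'(i,i')$ has $O(n/K^2)$ edges, and because $e_{G^\diamond}(A',B') \ge 2K^2\alpha n$ is even one can move $o(n)$ of its $A'B'$-edges between the bins to obtain graphs $H''(i,i')$ with: $e(H''(i,i'))$ even and $\ge 2\alpha n$; all but $\le \eps' n$ edges of $H''(i,i')$ inside $G^\diamond[A_0 \cup A_i, B_0 \cup B_{i'}]$; small non-hub maximum degree, namely $\Delta(H''(i,i') - W') < \alpha n$ (by Lemma~\ref{critical}(v)); $d_{H''(i,i')}(w) \le \alpha n$ for $w \in \{w_1,w_2\}$ and $\ge 2\lambda n/K^2$ for $w \in W \cap \{w_1,w_2\}$; and $d_{G'(i,i')}(v) = (2\alpha \pm \eps')n$ for all $v \in V_0$, where $G'(i,i') := H(i,i') + H''(i,i')$.

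\emph{Step 2 (the crux).} For each pair $(i,i')$ I would decompose $H := H''(i,i')$ into $\alpha n$ Hamilton exceptional system candidates with parameter $\epszero$, at least $2\lambda n/K^2$ of them \emph{special}: two independent $A'B'$-edges $F$ with $d_{F[A',B']}(w)=1$ for each $w \in W \cap \{w_1,w_2\}$. This is the critical-case analogue of Lemma~\ref{lma:BESdecomprelim}, which I would establish as a separate lemma, as follows. The non-hub graph $H - W'$ has maximum degree $< \alpha n$, hence by K\"onig's theorem and Proposition~\ref{prop:matchingdecomposition} it decomposes into matchings, which one may oversplit into exactly $\alpha n$ matchings with any prescribed (parity-correct) number of singletons; pairs of odd matchings of size $\ge 2$ are made even by exchanging an edge as in Proposition~\ref{prop:evenmatching}. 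It then remains to absorb the $\le \alpha n$ edges at each hub vertex, \emph{at most one per candidate}, which is possible precisely because of hypothesis~(v). When $|W'|=1$ (the $G_{\rm crit}$-type case, forced when $n \equiv 1 \pmod 4$ by Lemma~\ref{critical}(ii)) each edge $w_1b$ is either attached to a leftover singleton matching $\{a'b'\}$ with $b \notin \{a',b'\}$ --- producing a two-edge candidate with two $A'B'$-connections, exactly as for $G_{\rm crit}$ --- or paired with a second $w_1$-edge into a $2$-path absorbed into an even matching (adding no connection); a short count using $e(H) \ge 2\alpha n \ge 2\,d_H(w_1)$ shows the two options can be balanced so that $\ge 2\lambda n/K^2$ candidates are of the first, special, type. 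When $|W'| \ge 2$ the special candidates instead consist of one $w_1$-edge and one $w_2$-edge (chosen independent, and avoiding the remaining hub vertices), consuming no non-hub edge, and the remaining hub edges are absorbed as above. A Hall-type selection, as in the proof of Lemma~\ref{globalBES}, ensures that hub edges placed in a common candidate never share a non-hub endpoint.

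\emph{Steps 3--4, and the obstacle.} Finally, relabel the $\alpha n$ candidates from each $H''(i,i')$ exactly as in the proof of Lemma~\ref{lma:BESdecom} --- $\gamma_2 n$ of them as $(i,i')$-localized candidates $F_s(i,i')$ (those contained in $G^\diamond[A_0 \cup A_i, B_0 \cup B_{i'}]$) and $\gamma_2' n = \lambda n/K^2$ further ones as $F'_s(i,i')$, arranging that $\ge \lambda n/K^2$ of the former and all of the latter are special --- and apply Lemma~\ref{BEScons} with $G^\diamond, \gamma_2, \gamma_2'$ playing the roles of $G^*, \gamma, \gamma'$; its hypotheses follow from Steps~1 and~2 just as in Lemma~\ref{lma:BESdecom} (hypothesis~(vi) from Lemma~\ref{lma:BESdecomcritical}(i),(iii), hypothesis~(vii) from the degree bound on $G'(i,i')$). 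This yields a decomposition of $G^\diamond$ into $(D-\phi n)/2$ edge-disjoint Hamilton exceptional systems with parameter $\epszero$, of which $\gamma_2 n = (D-(\phi+2\lambda)n)/(2K^2)$ are $(i,i')$-localized for each pair; and since the extension in Lemma~\ref{lma:ESextend} adds only $A_0A$- and $B_0B$-edges, the $A'B'$-subgraph of each is unchanged, so $\lambda n/K^2$ of these localized systems $J$ satisfy $e_J(A',B')=2$ and $d_{J[A',B']}(w)=1$ for every $w \in W \cap \{w_1,w_2\}$, giving (b$_1$) and (b$_2$); this collection is the required $\mathcal{J}$. The main obstacle is Step~2: one must cover every $A'B'$-edge --- including the $\approx D/2$ edges piled onto the one-to-three hub vertices --- while keeping every candidate's number of $A'B'$-connections even and positive and producing enough special candidates with the exact incidence pattern at $w_1,w_2$. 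Criticality (via Lemma~\ref{critical}(iv)) is what makes the non-hub part a bounded-degree bipartite graph, and hypothesis~(v) is what guarantees there are never more hub edges than candidates, but the combinatorial bookkeeping of pairing hub edges with suitable partners --- and the genuinely different geometry of the $|W'|=1$ and $|W'|\ge 2$ cases --- is where the real difficulty lies.
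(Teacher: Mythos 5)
Your overall architecture matches the paper's: a localizing split of $G^\diamond$ (the paper's Lemma~\ref{lma:movecritical}, which additionally equidistributes the hub degrees via an explicit integer allocation), a decomposition of each $H''(i,i')$ into Hamilton exceptional system candidates that treats the hub vertices of $W'$ specially (the paper's Lemma~\ref{lma:BESdecomprelim2}), and then Lemma~\ref{BEScons} to finish. You also correctly identify Step~2 as the crux and correctly locate the roles of criticality and of hypothesis~(v).

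However, there is a genuine gap at the centre of your Step~2. Your key claim that the hub edges can be absorbed ``at most one per candidate, which is possible precisely because of hypothesis~(v)'' fails for the third hub vertex: (v) bounds only $d_{G^\diamond[A',B']}(w_1)$ and $d_{G^\diamond[A',B']}(w_2)$ by $(D-\phi n)/2=K^2\alpha n$, whereas $w_3\in W'\setminus\{w_1,w_2\}$ is constrained only by $\Delta(G[A',B'])\le D/2$, so $d_{H''(i,i')}(w_3)$ can be as large as roughly $D/(2K^2)=\alpha n+\phi n/(2K^2)$, i.e.\ strictly more than the $\alpha n$ available candidates (this is why the paper's Lemma~\ref{matrix} allows $a_{i,j}\in\{0,1,2\}$ and permits $a_3\le 31r/30>r$ while $a_1,a_2\le r$). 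Consequently some candidates must absorb two $w_3$-edges, i.e.\ a path $bw_3b'$ contributing \emph{zero} $A'B'$-connections, and one must then certify that every such candidate still has a positive, even number of connections and at most four edges, while simultaneously respecting the parity contributed by the single edges at $w_1,w_2$ and by the matching $M_j$, and while reserving $\ge 2\lambda n/K^2$ special candidates. This simultaneous allocation over up to three hubs is exactly the content of the paper's Lemma~\ref{matrix} (whose conclusion (c) supplies positivity and whose conclusion (b) supplies the parity and the $e(F_j)\in\{2,4\}$ count), and it is not replaced by your ``short count''; indeed your text is internally inconsistent on this point, since you first assert one hub edge per candidate and then pair two $w_1$-edges into a $2$-path within one candidate. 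To repair the argument you need either hypothesis~(v) extended to $w_3$ (which you do not have) or an explicit allocation lemma in the spirit of Lemma~\ref{matrix}; the remaining steps of your proposal are then sound.
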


Similarly as for Lemma~\ref{lma:BESdecom}, (b) implies that $\mathcal{J}$ contains $\lambda n$ Hamilton exceptional systems
which might not be localized. Another similarity is that when constructing the robustly decomposable graph $G^{\rm rob}$ in~\cite{paper1}, 
we only use those $J_s$ which have some additional useful properties, namely~(b$_1$) and~(b$_2$) in this case. 
This gives us a way of satisfying~(\ref{additional}) in the second application of
Lemma~\ref{lma:BESdecomcritical} in~\cite{paper1} (i.e.~after the removal of $G^{\rm rob}$), by `tracking' the degrees of the high
degree vertices $w_1$ and $w_2$.
Indeed, if  $d_{G[A',B']}(w_2)\ge 11D/40$, then (b$_2$) will imply that $d_{G^{\rm rob}[A',B']}(w_i)$ is large for $i=1,2$.
This in turn means that after removing $G^{\rm rob}$, in the leftover graph $G^\diamond$, $d_{G^\diamond[A',B']}(w_i)$ is comparatively small, 
i.e.~condition~(\ref{additional}) will hold in the  second application  of Lemma~\ref{lma:BESdecomcritical}.

Condition~(\ref{additional}) itself is natural for the following reason:
suppose for example that it is violated  for $w_1$ and that $w_1 \in A_0$. 
Then for some Hamilton exceptional system $J$ returned by the lemma, both edges of $J$ incident to $w_1$ will
have their other endpoint in $B'$. So (the edges at) $w_1$ cannot be used as a `connection' between $A'$ and $B'$
in the Hamilton cycle which will extend $J$, and it may be impossible to find such a connection elsewhere.

The overall strategy for the proof of Lemma~\ref{lma:BESdecomcritical} is similar to that of Lemma~\ref{lma:BESdecom}.
As before, it consists of four steps. 
In Step~1, we use Lemma~\ref{lma:movecritical} instead of Lemma~\ref{lma:move}. In Step~2, we use Lemma~\ref{lma:BESdecomprelim2} instead of
Lemma~\ref{lma:BESdecomprelim}. We still use Lemma~\ref{BEScons} which combines Steps~3 and~4.


\subsubsection{Step $1$: Constructing the graphs $H''(i,i')$}
The next lemma is an analogue of Lemma~\ref{lma:move}. We will apply it with the graph $G^\diamond$ from Lemma~\ref{lma:BESdecomcritical}(iv)
playing the role of $G$. Note that instead of assuming that our graph $G$ given in Lemma~\ref{lma:BESdecomcritical} is critical, the
lemma assumes that $e_{G^{\diamond}}(A',B')\le 2n$. This is a weaker assumption, since if $G$ is critical, then
$e_{G^\diamond}(A',B')\le e_G(A',B') < n$ by Lemma~\ref{critical}(iii). 
Using only this weaker assumption has the advantage that we can also apply the lemma in the proof of Lemma~\ref{lma:PBESdecom}, i.e.~the
case when $e_G(A',B') < D$. (b$_7$) is only used in the latter application.

\begin{lemma} \label{lma:movecritical}
Suppose that $0 <  1/n  \ll \epszero \ll  \eps \ll 1/K \ll 1$ and that $n, K,m\in \mathbb{N}$.
Let $(G, \mathcal{P})$ be a $(K, m, \epszero, \eps )$-exceptional scheme with $|G| = n$ and $e_G(A_0), e_G(B_0) =0$.
Let $W_0$ be a subset of $V_0$ of size at most~$2$ such that for each $w \in W_0$, we have%
    \COMMENT{In general, $W_0 = \{ w_1, w_2 \}$, where $w_1$ and $w_2$ are the two vertices such that $d_{G[A',B']}(w_1) \ge  d_{G[A',B']}(w_2) \ge d_{G[A',B']}(v)$ for $v \in V(G)$.
By Lemma~\ref{lma:BESdecomcritical}(v), $d_{G^{\diamond}[A',B']}(w_1), d_{G^{\diamond}[A',B']}(w_2) \le (D- \phi n)/2$.
Recall that $e_G(A',B') \ge D$ and $e_{G_0}(A',B') \le \phi n$, so we have $d_{G^{\diamond}[A',B']}(w) \le e_{G^{\diamond}}(A',B')/2$.}
\begin{equation}\label{eq:degw}
K^2 \le d_{G[A',B']}(w) \le e_G(A',B')/2.
\end{equation}
Suppose that $e_G(A',B')\le 2n$ is even.
Then $G$ can be decomposed into edge-disjoint spanning subgraphs $H(i,i')$ and $H''(i,i')$ of $G$ (for all $1 \le i,i' \le K$)%
\COMMENT{AL: added $1 \le$}
such that the following properties hold, where $G'(i,i'):=H(i,i')+H''(i,i')$:
\begin{itemize}
\item[\rm (b$_1$)] Each $H(i,i')$ contains only $A_0A_i$-edges and $B_0B_{i'}$-edges.
\item[\rm (b$_2$)] $H''(i,i')\subseteq G[A',B']$. Moreover, all but at most  $20 \eps n/K^2$ edges of $H''(i,i')$ lie in $G[A_0 \cup A_i, B_0 \cup B_{i'}]$.
\item[\rm (b$_3$)] $e(H''(i,i')) =  2 \left\lceil e_G(A',B') / (2 K^2) \right\rceil$ or $e(H''(i,i')) =  2 \left\lfloor e_G(A',B') / (2 K^2) \right\rfloor$.
\item[\rm (b$_4$)] $d_{H''(i,i')}(v )  =  ( d_{G[A',B']}(v)  \pm 25 \eps n)/K^2$ for all $v \in V_0$.
\item[\rm (b$_5$)] $d_{G'(i,i')}(v )  =  \left( d_{G}(v) \pm   25 \eps n \right)  / K^2 $ for all $v \in V_0$.
\item[\rm (b$_6$)] Each $w \in W_0$ satisfies $d_{H''(i,i')}(w) = \lceil d_{G[A',B']}(w) / K^2 \rceil $ or 
$d_{H''(i,i')}(w) = \lfloor d_{G[A',B']}(w) / K^2 \rfloor $.
\item[\rm (b$_7$)] Each $w \in W_0$ satisfies $2 d_{H''(i,i')}(w) \le  e(H''(i,i'))$.%
    \COMMENT{This statement is used when $e_G(A',B') < D$.}
\end{itemize}
\end{lemma}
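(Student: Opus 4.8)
The plan is to follow the strategy of the proof of Lemma~\ref{lma:move}: first apply Lemma~\ref{lma:randomslice} to decompose $G$ into graphs $H(i,i')$ and $H'(i,i')$ satisfying~(a$_1$)--(a$_5$) (with $G(i,i'):=H(i,i')+H'(i,i')$), and then rebalance by moving a small number of $A'B'$-edges of $G$ between the $H'(i,i')$, leaving the $H(i,i')$ untouched, so as to obtain graphs $H''(i,i')$ meeting the exact conditions (b$_3$), (b$_6$) and the domination condition (b$_7$). Write $e':=e_G(A',B')/2$ and $e'=K^2q+r$ with $0\le r<K^2$, so that $2q$ and $2(q+1)$ are precisely the two values permitted by~(b$_3$); if $W_0\neq\emptyset$ then \eqref{eq:degw} forces $e_G(A',B')\ge 2K^2$ and hence $q\ge 1$. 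Since $e_G(A',B')\le 2n$, \emph{all} error terms in Lemma~\ref{lma:randomslice}(a$_3$)--(a$_5$) are of order $\eps n/K^2$, so after the initial slicing each $e(H'(i,i'))$ differs from its target value by at most $8\eps n/K^2+2$, and for each $w\in W_0$ each $d_{H'(i,i')}(w)$ differs from its target value $\lceil d_{G[A',B']}(w)/K^2\rceil$ or $\lfloor d_{G[A',B']}(w)/K^2\rfloor$ by at most $2\eps n/K^2+1$. Conditions (b$_4$) and (b$_5$) will then follow from~(a$_4$) and~(a$_5$) because each vertex will be incident to at most $O(\eps n/K^2)$ moved edges per graph, and the slack in the constant $25$ is chosen exactly to absorb this.

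The rebalancing proceeds in two phases. In the first phase, for each $w\in W_0$ in turn we move edges incident to $w$ between the $H'(i,i')$ until $d_{H''(i,i')}(w)\in\{\lfloor d_{G[A',B']}(w)/K^2\rfloor,\lceil d_{G[A',B']}(w)/K^2\rceil\}$ for every $(i,i')$, choosing \emph{which} graphs receive the extra $w$-edge with an eye on~(b$_7$) (see below); since $|W_0|\le 2$ this moves $O(\eps n)$ edges, and distinct vertices of $W_0$ can be handled independently apart from a possible single $A_0B_0$-edge joining them, which we assign by hand. In the second phase we move only $A'B'$-edges \emph{not} incident to $W_0$ (so that~(b$_6$) is not destroyed) from ``overfull'' to ``underfull'' graphs until every $e(H''(i,i'))$ lies in $\{2q,2(q+1)\}$, with exactly $r$ of them equal to $2(q+1)$; this is possible because an overfull graph has more edges than $d_{H''(i,i')}(w_1)+d_{H''(i,i')}(w_2)$ and hence contains an edge avoiding $W_0$ --- except in the degenerate situation where almost all $A'B'$-edges of $G$ are incident to $W_0$, in which case the $W_0$-degrees already determine the edge counts and one only has to choose the distributions in the first phase so that every $e(H''(i,i'))$ comes out even. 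Throughout, the number of edges moved into any single $H''(i,i')$ stays below $20\eps n/K^2$, which with Lemma~\ref{lma:randomslice}(a$_2$) gives~(b$_1$) and~(b$_2$).

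It remains to arrange~(b$_7$). Since $d_{G[A',B']}(w)\le e_G(A',B')/2=e'$ by \eqref{eq:degw}, writing $d_{G[A',B']}(w)=K^2q_w+r_w$ with $0\le r_w<K^2$ gives $q_w\le q$, and $r_w\le r$ whenever $q_w=q$. Hence for each $w\in W_0$ with $q_w=q$ we may (already in the first phase) choose the set of $r_w$ graphs that are to have $w$-degree $q_w+1$ to be a subset of the $r$ graphs that are to have the extra total edge --- these constraints for different $w$ are independent, since a graph is allowed to have the extra edge at more than one vertex of $W_0$. Then any $H''(i,i')$ with $d_{H''(i,i')}(w)=q_w+1$ satisfies $2d_{H''(i,i')}(w)=2q_w+2\le 2q\le e(H''(i,i'))$ when $q_w<q$, and $2d_{H''(i,i')}(w)=2(q+1)=e(H''(i,i'))$ when $q_w=q$; and if $d_{H''(i,i')}(w)=q_w\le q$ then trivially $2d_{H''(i,i')}(w)\le 2q\le e(H''(i,i'))$. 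Either way~(b$_7$) holds.

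The main obstacle is precisely this simultaneous enforcement of the three \emph{exact} conditions (b$_3$), (b$_6$), (b$_7$): unlike in Lemma~\ref{lma:move}, rough bounds on $e(H''(i,i'))$ no longer suffice, so one must keep the moved edges of the second phase away from $W_0$ (lest the first phase be undone), nest the ``extra-$w$-edge'' sets inside the ``extra-total-edge'' sets, and deal separately with the few edges lying inside or near $W_0$ in the degenerate case when $G[A',B']$ has hardly any edges avoiding $W_0$. Everything else is a routine consequence of Lemma~\ref{lma:randomslice} together with the smallness of all error terms guaranteed by $e_G(A',B')\le 2n$.
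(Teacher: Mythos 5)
Your proposal is correct and follows essentially the same route as the paper's proof: apply Lemma~\ref{lma:randomslice}, then rebalance in two phases (first the $W_0$-degrees, then the totals using only edges avoiding $W_0$), with the key arithmetic observation that writing $e_G(A',B')/2=K^2q+r$ and $d_{G[A',B']}(w)=K^2q_w+r_w$ forces $q_w\le q$ and, when $q_w=q$, $r_w\le r$, so the graphs receiving the extra $w$-edge can be nested inside those receiving the extra pair of edges — this is precisely the paper's preliminary Claim about the numbers $a_{i,i'}(w)$ and $b_{i,i'}$. The remaining details (handling a possible $w_1w_2$-edge separately, and the fact that an overfull graph always has an edge avoiding $W_0$ because $\sum_{w\in W_0}d_{H''(i,i')}(w)\le e(H''(i,i'))$) also match the paper's argument.
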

\begin{proof}
Since $e_G(A',B')$ is even, there exist unique non-negative integers $b$ and $q$ such that $e_G(A',B') = 2K^2 b+ 2q$ and $q < K^2$.
Hence, for all $1 \le i,i' \le K$,%
\COMMENT{AL: changed to  $1 \le i,i' \le K$}
 there are integers $b_{i,i'}\in \{2b, 2b+2\}$ such that $\sum_{i,i' \le K} b_{i,i'} = e_G(A',B')$.
In particular, the number of pairs $i,i'$ for which $b_{i,i'}=b+2$ is precisely~$q$.
We will choose the graphs $H''(i,i')$ such that $e(H''(i,i'))=b_{i,i'}$. (In particular, this will ensure that (b$_3$) holds.)
The following claim will help to ensure~(b$_6$) and~(b$_7$). 

\medskip

\noindent
\textbf{Claim.} \emph{For each $w \in W_0$ and all $i,i' \le K$ there is an integer  $a_{i,i'}=a_{i,i'}(w)$ which satisfies the following properties:
\begin{itemize}
	\item $a_{i,i'} = \lceil d_{G[A',B']}(w) /K^2 \rceil $ or $a_{i,i'} = \lfloor d_{G[A',B']}(w) /K^2 \rfloor$.
	\item $2a_{i,i'} \le b_{i,i'}$.
	\item $\sum_{i,i' \le K} a_{i,i'} = d_{G[A',B']} (w)$.
	\end{itemize}}
	
\smallskip

\noindent To prove the claim,
note that there are unique non-negative integers $a$ and $p$ such that $d_{G[A',B']} (w) = K^2 a + p $ and $p<K^2$.
Note that $a\ge 1$ by (\ref{eq:degw}). Moreover,
\begin{align}\label{eq:apbq}
 2(K^2 a +p) =   2 d_{G[A',B']} (w) \stackrel{(\ref{eq:degw})}{\le} e_G(A',B') = 2 K^2 b+ 2q.
\end{align}
This implies that $a\le b$. Recall that $b_{i,i'} \in \{2b,2b+2\}$. So if $b > a$, then the claim holds by
choosing any $a_{i,i'}\in \{a,a+1\}$ such that $\sum_{i,i' \le K} a_{i,i'} = d_{G[A',B']} (w)$.
Hence we may assume that $a = b$. Then~(\ref{eq:apbq}) implies that $p \le q$.
Therefore, the claim holds by setting $a_{i,i'}:= a+1$ for exactly $p$ pairs $i,i'$ for which $b_{i,i'} = 2b+2$ and
setting $a_{i,i'}:= a$ otherwise. This completes the proof of the claim.

\medskip

\noindent 
Apply Lemma~\ref{lma:randomslice} to decompose $G$ into subgraphs $H(i,i')$, $H'(i,i')$ (for all $i,i' \le K$) satisfying the following properties,
where $G(i,i') = H(i,i') + H'(i,i')$:
\begin{itemize}
\item[\rm (a$'_1$)] Each $H(i,i')$ contains only $A_0A_i$-edges and $B_0B_{i'}$-edges.
\item[\rm (a$'_2$)] All edges of $H'(i,i')$ lie in $G[A_0 \cup A_i, B_0 \cup B_{i'}]$.
\item[\rm (a$'_3$)] $e ( H'(i,i') )   =  ( e_{G}(A',B')  \pm  8 \eps n ) /K^2$.
\item[\rm (a$'_4$)] $d_{H'(i,i')}(v )  =  ( d_{G[A',B']}(v)  \pm 2 \eps n)/K^2$ for all $v \in V_0$.
\item[\rm (a$'_5$)] $d_{G(i,i')}(v )  =  ( d_{G}(v)  \pm 4 \eps n)/K^2$ for all $v \in V_0$.
\end{itemize}
Indeed, (a$'_3$) follows from Lemma~\ref{lma:randomslice}(a$_3$) and our assumption that $e_G(A',B') \le 2n$.

Clearly, (a$'_1$) implies that the graphs $H(i,i')$ satisfy (b$_1$). We will now move some $A'B'$-edges of $G$ between
the $H'(i,i')$ such that the graphs $H''(i,i')$ obtained in this way satisfy the following conditions:
\begin{itemize}
\item Each $H''(i,i')$ is obtained from $H'(i,i')$ by adding or removing at most $20\eps n/K^2$ edges of $G$.
\item $e(H''(i,i'))=b_{i,i'}$.
\item $d_{H''(i,i')}(w)=a_{i,i'}(w)$ for each $w\in W_0$, where $a_{i,i'}(w)$ are integers satisfying the claim.
\end{itemize}
Write $W_0=:\{w_1\}$ if $|W_0|=1$ and $W_0=:\{w_1,w_2\}$ if $|W_0|=2$.
If $W_0 \ne \emptyset$, then (a$'_4$) implies that $d_{H'(i,i')}(w_1 )  =   a_{i,i'}(w_1)  \pm (2 \eps n/K^2+1)$.
For each $i,i'\le K$, we add or remove at most $2 \eps n /K^2+1$ edges incident to $w_1$
such that the graphs $H''(i,i')$ obtained in this way satisfy $d_{H''(i,i')}(w_1) =  a_{i,i'}(w_1)$.
Note that since $a_{i,i'}(w_1) \ge \lfloor d_{G[A',B']}(w_1) /K^2 \rfloor \ge 1$ by~(\ref{eq:degw}), we can do this in such a way that
we do not move the edge $w_1w_2$ (if it exists).%
\COMMENT{This is not really needed.}
Similarly, if $|W_0|=2$, then for each $i,i'\le K$ we add or remove at most $2 \eps n /K^2+1$ edges incident to $w_2$
such that the graphs $H''(i,i')$ obtained in this way satisfy $d_{H''(i,i')}(w_2) =  a_{i,i'}(w_2)$.
As before, we do this in such a way that we do not move the edge $w_1w_2$ (if it exists).

Thus $d_{H''(i,i')}(w_1)  =  a_{i,i'}(w_1)$ and $d_{H''(i,i')}(w_2) =  a_{i,i'}(w_2)$
for all $1 \le i,i' \le K$%
\COMMENT{AL: added $1 \le$}
 (if $w_1,w_2$ exist). In particular, together with the claim this implies
that $d_{H''(i,i')}(w_1) ,d_{H''(i,i')}(w_2) \le b_{i,i'}/2$. Thus the number of edges
of $H''(i,i')$ incident to $W_0$ is at most
\begin{align}
	\sum_{w \in W_0}d_{H''(i,i')}(w) \le b_{i,i'}.  \label{H''1}
\end{align}
(This holds regardless of the size of $W_0$.) On the other hand, (a$'_3$) implies that for all $i, i' \le K$ we have
\begin{align}
e ( H''(i,i') )   =  ( e_{G}(A',B')  \pm  8 \eps n ) /K^2 \pm 2(2 \eps n /K^2 + 1) = b_{i,i'} \pm 13 \eps n/K^2. \nonumber
\end{align}
Together with~\eqref{H''1} this ensures that we can add or delete at most $13 \eps n/K^2$ edges which do not intersect $W_0$
to or from each $H''(i,i')$ in order to ensure that $e(H''(i,i')) = b_{i,i'}$ for all $i,i' \le K$.
Hence, (b$_3$), (b$_6$) and (b$_7$) hold. Moreover,
\begin{equation}\label{eq:edgediff}
e(H''(i,i')-H'(i,i'))\le |W_0| (2\eps n/K^2 +1) + 13 \eps n/K^2 \le 20 \eps n/K^2.
\end{equation}
So (b$_2$) follows from (a$'_2$). Finally, (b$_4$) and (b$_5$) follow from (\ref{eq:edgediff}), (a$'_4$) and (a$'_5$).
\end{proof}


\subsubsection{Step $2$: Decomposing $H''(i,i')$ into Hamilton exceptional system candidates}

Before we can prove an analogue of Lemma~\ref{lma:BESdecomprelim}, we need the following result.
It will allow us to distribute the edges incident to the (up to three) vertices $w_i$ of high degree in $G[A',B']$ in a suitable way
among the localized Hamilton exceptional system candidates $F_j$.
The degrees of these high degree vertices $w_i$ will play the role of the $a_i$.
The $c_j$ will account for edges (not incident to $w_i$) which have already been assigned to the $F_j$.
(b) and (c) will be used to ensure  (ESC4), i.e.~that the total number of `connections' between $A'$ and $B'$ is even and positive.

\begin{lemma} \label{matrix}
Let $1 \le q \le 3$ and $0 \le \eta < 1$ and $r, \eta r \in \mathbb{N}$.
Suppose that $a_1, \dots, a_q \in \mathbb{N}$ and $c_1, \dots, c_r \in \{0,1,2\}$ satisfy the following conditions:
\begin{itemize}
	\item[\rm (i)] $c_1 \ge \dots \ge c_r \ge c_1-1 $.
	\item[\rm (ii)] $\sum_{i \le q} a_i + \sum_{j \le r} c_j = 2(1+\eta)r$.
	\item[\rm (iii)] $31 r /60  \le a_1, a_2 \le r$ and $31 r /60  \le a_3 \le 31r/30$.
\end{itemize}
Then for all $i \le q$ and all $j \le r$ there are $a_{i,j} \in \{0,1,2\}$ such that the following properties hold:
\begin{itemize}
	\item[\rm (a)] $\sum_{j \le r} a_{i,j} = a_i$ for all $i \le q$.
	\item[\rm (b)] $c_j + \sum_{i \le q} a_{i,j} = 4$ for all $j \le \eta r$ and $c_j + \sum_{i \le q} a_{i,j} = 2$ for all $\eta r < j \le r$.
	\item[\rm (c)] For all $j \le r$ there are at least $2- c_j$ indices $i \le q$ with $a_{i,j}=1$.
\end{itemize}
\end{lemma}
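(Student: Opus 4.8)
The plan is to think of the $a_i$ as prescribed row‐sums and to build the entries $a_{i,j}\in\{0,1,2\}$ greedily column by column, using condition (ii) as a global counting constraint that forces everything to fit. First I would record the total budget: by (ii) the entries $a_{i,j}$ must sum to $\sum_i a_i = 2(1+\eta)r - \sum_j c_j$, and the column targets in (b) ask for $\sum_i a_{i,j}$ to equal $4-c_j$ on the first $\eta r$ columns and $2-c_j$ on the remaining ones, which sums to exactly $4\eta r + 2(r-\eta r) - \sum_j c_j = 2(1+\eta)r - \sum_j c_j$, so the row and column requirements are globally consistent. This consistency check is what makes a construction plausible; the work is to realize it while respecting the box constraint $a_{i,j}\le 2$ and the parity-of-connections constraint (c).

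Next I would handle the small cases $q=1,2$ separately and then the main case $q=3$. For $q\le 2$ the column sum $4-c_j$ or $2-c_j$ is at most $4$ and at least $4-2=2$ (resp. $2-c_j\ge 0$), so a column needs at most two units per available slot; since each $a_i\ge 31r/60 > r/2$ by (iii), there is plenty of mass to distribute, and I would just fill columns in a canonical order. The heart of the matter is $q=3$: here (iii) gives $a_1,a_2\in[31r/60,r]$ and $a_3\in[31r/60,31r/30]$, and the target column sums are $4$ (for $j\le\eta r$) and $2$ (for $\eta r<j\le r$). I would proceed by first using rows $1$ and $2$ to cover as much as possible: on the "heavy" columns $j\le\eta r$ put $a_{1,j}=a_{2,j}=1$ whenever $c_j=2$ (so the column is already complete), and more generally match the deficit $4-c_j$ or $2-c_j$ against the remaining budgets of rows $1,2,3$ in a fixed priority order, always assigning a $1$ before a $2$ in any given column so that condition (c) — at least $2-c_j$ indices $i$ with $a_{i,j}=1$ — is automatically maintained (note $2-c_j\le 2$, and if $c_j\le 1$ the column sum is $\ge 1$ so there is at least one positive entry, which we take to be a $1$; if $c_j=0$ we need two $1$'s, and the column sum is $\ge 2$, so we place two $1$'s first).

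The main obstacle I expect is a feasibility/bookkeeping argument: one must show that the greedy filling never runs out of room, i.e. that after processing all columns each row $i$ has received exactly $a_i$ units without ever being forced to exceed $2$ in a cell or to violate (c). I would control this by a "Hall-type" slack estimate — at every stage the total remaining column demand equals the total remaining row supply (by the global identity above), and the lower bounds $a_i\ge 31r/60$ together with $c_1\ge\cdots\ge c_r\ge c_1-1$ (which means the $c_j$ differ by at most one, so the "heavy" demand is spread out) guarantee that no single row is over- or under-subscribed. Concretely I would split into the subcases $c_1\in\{0,1,2\}$ and in each compute $\sum_j c_j$ up to $\pm r$, deduce $\sum_i a_i$, and check the arithmetic inequalities $a_i\le$ (number of columns) $=r$ for $i=1,2$ and $a_3\le 2r$ hold with room to spare, so a valid assignment exists; then exhibit it by the explicit priority rule. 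The verification of (a), (b), (c) at the end is then a routine tally. I would also double-check the edge effect of the parity adjustment that might have been needed to make $\sum_j c_j$ have the right parity relative to $2(1+\eta)r$ — but since (ii) is assumed as a hypothesis, this is already guaranteed and no adjustment is needed.
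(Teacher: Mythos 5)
Your opening consistency check (total row supply equals total column demand) is correct and matches the starting point of the paper's argument, and your idea of placing $1$'s before $2$'s in a column to secure condition (c) is essentially the right mechanism. However, the heart of the lemma is precisely the feasibility claim that you defer to ``a Hall-type slack estimate'' and to checking that the inequalities ``hold with room to spare'': you never actually show that the greedy filling terminates with every row sum exactly $a_i$ and every entry in $\{0,1,2\}$. Global consistency of supply and demand is far from sufficient for a degree-constrained $\{0,1,2\}$-matrix with prescribed row and column sums to exist (a Gale--Ryser-type condition is needed), and the extra constraint (c) makes the realization problem strictly harder than that. The dangerous scenarios are exactly the ones your sketch does not exclude: a row (e.g.\ row $3$ with $a_3$ up to $31r/30>r$) still having leftover supply while all remaining columns are saturated by the other rows, or a column with $c_j<2$ whose demand can only be completed by a row that is forced to contribute a $2$ there, violating (c).

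The paper resolves this by processing the rows one at a time and maintaining quantitative invariants on the partial column sums $c_j^{(i)}=c_j+\sum_{i'\le i}a_{i',j}$: they stay sorted and at most $4$, they differ pairwise by at most $1$ as long as $\sum_j c_j^{(i)}<2r$, and once $\sum_j c_j^{(i)}\ge 2r$ every column has reached at least $2$ (exactly $2$ beyond index $\eta r$); the construction then splits into three cases according to whether $\sum_j c_j^{(i-1)}$ is at least $2r$, between $2r-a_i$ and $2r$, or below $2r-a_i$, and each case uses the bounds in (iii) (e.g.\ $a_1+a_2\ge 31r/30\ge a_3$) in an essential way. Nothing in your proposal substitutes for these invariants or for that case analysis. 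A smaller but real omission of the same kind: for $q\le 2$ you assert that each column ``needs at most two units per available slot,'' but with $q=1$ and $c_j\le 1$ for some $j\le\eta r$ the demand $4-c_j\ge 3$ exceeds what a single entry can supply; this situation is impossible, but only because (i) and (ii) force $c_j=2$ on all columns $j\le \eta r$ when $q=1$, an argument you would need to supply. As it stands the proposal is a plausible plan rather than a proof.
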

\begin{proof}
We will choose $a_{i,1}, \dots, a_{i,r}$ for each $i\le q$ in turn such that the following properties
($\alpha_i$)--($\rho_i$) hold, where we write $c_j^{(i)} : = c_j + \sum_{i' \le i} a_{i',j}$ for each $ 0 \le  i \le q$ (so $c_j^{(0)}=c_j$):
\begin{itemize}
\item[\rm ($\alpha_i$)] If $i \ge 1$ then $\sum_{j \le r} a_{i,j} = a_i$.
\item[\rm ($\beta_i$)] $4\ge c^{(i)}_1 \ge \dots \ge c^{(i)}_r$.
\item[\rm ($\gamma_i$)] If $\sum_{j\le r} c^{(i)}_j < 2r$, then $| c^{(i)}_j - c^{(i)}_{j'} | \le 1$ for all $j,j' \le r$.
\item[\rm ($\delta_i$)] If $\sum_{j\le r} c^{(i)}_j \ge 2r$, then $c^{(i)}_j \ge 2$ for all $j \le \eta r$ and $c^{(i)}_j = 2$ for all $\eta r< j \le r$.
\item[\rm ($\rho_i$)] If $1\le i\le q$ and $c^{(i-1)}_j < 2$ for some $j\le r$, then $a_{i,j}  \in \{ 0 , 1 \}$.
\end{itemize}
We will then show that the $a_{i,j}$ defined in this way are as required in the lemma.

Note that (i) and the fact that $c_1, \dots, c_r \in \{0,1,2\}$ together imply ($\beta_0$)--($\delta_0$).
Moreover, ($\alpha_0$) and ($\rho_0$) are vacuously true.
Suppose that for some $1\le i\le q$ we have already defined $a_{i',j}$ for all $i'<i$ and all $j\le r$ such that ($\alpha_{i'}$)--($\rho_{i'}$) hold.
In order to define $a_{i,j}$ for all $j\le r$, we distinguish the following cases.

\medskip 

\noindent\textbf{Case 1: $\sum_{j \le r} c^{(i-1)}_j \ge 2r$.}

\smallskip

\noindent Recall that in this case $c^{(i-1)}_j \ge 2$ for all $j \le r$ by ($\delta_{i-1}$).
For each $j\le r$ in turn we choose $a_{i,j}\in \{0,1,2\}$ as large as possible subject to the constraints
that
\begin{itemize}
\item $a_{i,j}+ c^{(i-1)}_j\le 4$ and
\item $\sum_{j'\le j} a_{i,j'}\le a_i$.
\end{itemize}
Since $c^{(i)}_j=a_{i,j}+ c^{(i-1)}_j$, ($\beta_i$) follows from ($\beta_{i-1}$) and our choice of the $a_{i,j}$.
($\gamma_i$) is vacuously true. To verify ($\delta_i$), note that $c_j^{(i)} \ge c_j^{(i-1)}\ge 2$ by ($\delta_{i-1}$).
Suppose that the second part of ($\delta_{i}$) does not hold, i.e.~that $c_{\eta n+1}^{(i)}>2$.
This means that $a_{i,\eta n+1}>0$.
Together with our choice of the $a_{i,j}$ this implies that $c_j^{(i)}=4$ for all $j\le \eta n$.
Thus%
   \COMMENT{We can't replace $\le$ by $=$ since at the moment we only know that $\sum_{j\le r} a_{i,j}\le a_i$.}
\begin{align*}
2(1+\eta) r & =4\eta r+2(r-\eta r)< \sum_{j\le r} c^{(i)}_j=\sum_{j\le r} a_{i,j}+\sum_{i'<i} a_{i'}+ \sum_{j\le r} c_j
\le \sum_{i'\le i} a_{i'}+ \sum_{j\le r} c_j
\end{align*}
contradicting~(ii). Thus the second part of ($\delta_{i}$) holds too. Moreover, 
$c_{\eta n+1}^{(i)}=c_{\eta n+1}^{(i-1)}=2$ also means that $a_{i,\eta n+1}=0$.
So $\sum_{j'\le \eta n} a_{i,j'}= a_i$, i.e.~ ($\alpha_i$) holds. ($\rho_i$) is vacuously true since $c^{(i-1)}_j \ge 2$ by ($\delta_{i-1}$).

\medskip

\noindent\textbf{Case 2: $2r - a_i  \le \sum_{j \le r} c^{(i-1)}_j < 2r$.}

\smallskip

\noindent
If $i\in \{1,2\}$ then together with (iii) this implies that
\begin{equation}\label{eq:sum1st}
\sum_{j \le r} c^{(i-1)}_j\ge r\ge a_i.
\end{equation}
If $i=3$ then
\begin{equation}\label{eq:sum2nd}
\sum_{j \le r} c^{(i-1)}_j\ge \sum_{j\le r} \sum_{i' \le 2} a_{i',j}=a_1+a_2\ge \frac{31r}{30}\ge a_3
\end{equation}
 by~(iii). In particular, in both cases we have $\sum_{j \le r} c^{(i-1)}_j\ge r$.
Together with ($\gamma_{i-1}$) this implies that $c^{(i-1)}_j \in \{ 1 , 2\}$ for all $j \le r$.
Let $0\le r'\le r$ be the largest integer such that $c^{(i-1)}_{r'} = 2$. 
So $r' < r$ and $\sum_{j \le r} c^{(i-1)}_j  = r+r' $. Together with (\ref{eq:sum1st}) and (\ref{eq:sum2nd})
this in turn implies that $a_i \le r+r'$ (regardless of the value of~$i$). 

Set $a_{i,j}:=1$ for all $r'<j\le r$. Note that
$$\sum_{r'<j\le r} a_{i,j}=r-r'=2r-\sum_{j \le r} c^{(i-1)}_j \le a_i,$$
where the final inequality comes from the assumption of Case~2. Take $a_{i,1},\dots,a_{i,r'}$ to be a sequence of the form $2,\dots,2,0,\dots,0$
(in the case when $a_i-\sum_{r'<j\le r} a_{i,j}$ is even)
or $2,\dots,2,1,0,\dots,0$ (in the case when $a_i-\sum_{r'<j\le r} a_{i,j}$ is odd) which is chosen in such a way
that $\sum_{j\le r'} a_{i,j}=a_i-\sum_{r'<j\le r} a_{i,j} = a_i-r+r'$. This can be done since $a_i \le r+r'$ implies that the right hand side is at most $2r'$.

Clearly, ($\alpha_i$), ($\beta_i$) and ($\rho_i$) hold.
Since $\sum_{j\le r} c^{(i)}_j=a_i+\sum_{j\le r} c^{(i-1)}_j \ge 2r$ as we are in Case~2,
($\gamma_i$) is vacuously true. Clearly, our choice of the $a_{i,j}$ guarantees that $c^{(i)}_j\ge 2$ for all $j\le r$.
As in Case~1 one can show that%
    \COMMENT{If $c^{(i)}_{\eta r+1}> 2$ then we must have that $r'>\eta r$ and $c^{(i)}_j= 4$ for all $j\le \eta r$.
Now the same argument as in Case~1 gives a contradiction.}
$c^{(i)}_j= 2$ for all $\eta r<j\le r$. Thus ($\delta_i$) holds.

\medskip

\noindent\textbf{Case 3: $\sum_{j \le r} c^{(i-1)}_j < 2r - a_i$.}

\smallskip

\noindent
Note that in this case
$$
2r>\sum_{j \le r} c^{(i-1)}_j +a_i = \sum_{i' \le i} a_{i'} + \sum_{j \le r }c_j ,$$
and so $i < q$ by (ii). Together with (iii) this implies that $a_i \le r$. Thus for all $j \le r$
we can choose $a_{i,j} \in \{0,1\}$ such that ($\alpha_i$)--($\gamma_i$) and ($\rho_i$) are satisfied.
($\delta_i$) is vacuously true.

\medskip

This completes the proof of the existence of numbers $a_{i,j}$ (for all $i\le q$ and all $j\le r$)
satisfying ($\alpha_i$)--($\rho_i$). It remains to show that these $a_{i,j}$ are as required in the lemma.
Clearly, ($\alpha_1$)--($\alpha_q$) imply that (a) holds. Since
$c_j^{(q)}  = c_j + \sum_{i \le q} a_{i,j}$ the second part of (b) follows from ($\delta_q$).
Since $c_j^{(q)}\le 4$ for each $j\le \eta r$ by~($\beta_q$), together with (ii) this in turn implies that the
first part of (b) must hold too. If $c_j < 2$, then ($\rho_1$)--($\rho_q$) and (b) together imply that for at least
$2-c_j$ indices $i$ we have $a_{i,j} = 1$. Therefore, (c) holds.
\end{proof}


We can now use the previous lemma to decompose the bipartite graph induced by $A'$ and $B'$ into 
Hamilton exceptional system candidates. 

\begin{lemma} \label{lma:BESdecomprelim2}
Suppose that $0< 1/n \ll \eps_0 \ll \alpha < 1$, that $0\le  \eta < 199/200$ and
that%
   \COMMENT{Previously had $\eta \alpha n/200\in\mathbb{N}$. But in the proof of Lemma~\ref{lma:BESdecomcritical}
we cannot ensure that this holds, which is fine since we do not need it.}
$n, \alpha n/200, \eta \alpha n  \in \mathbb{N}$.
Let $H$ be a bipartite graph on $n$ vertices with vertex classes $A \dot\cup A_0$ and $ B  \dot\cup B_0$ where $|A_0|+|B_0| \le \eps_0 n$.
Furthermore, suppose that the following conditions hold:
\begin{itemize}
	\item[\rm (c$_1$)] $e(H) = 2(1+ \eta)\alpha n$.
	\item[\rm (c$_2$)] There is a set $W'\subseteq V(H)$ with $1 \le |W'| \le 3$%
\COMMENT{$W'$ will be the vertex set as defined in Lemma~\ref{critical}.} 
	and such that 
\begin{align*}
\textrm{$e(H- W' ) \le 199\alpha n/100$ and $d_H(w) \ge 13 \alpha n / 25 $ for all $w \in W'$.}
\end{align*}
	\item[\rm (c$_3$)] There exists a set $W_0 \subseteq W'$ with $|W_0| = \min \{2, |W'| \}$ and such that $d_H(w)  \le \alpha n$ for all $w \in W_0$
and $d_H(w') \le 41 \alpha n / 40$ for all $w' \in W' \setminus W_0$.%
	\COMMENT{Later on, we will take $W_0 = \{w_1, w_2\}$, where $w_1$ and $w_2$ are the two vertices that we can bound the degree in $G^{\diamond}[A',B']$.}
	\item[\rm (c$_4$)] For all $w \in W'$ and all $v \in V(H) \setminus W'$ we have $d_H(w) - d_H(v) \ge \alpha n/150$.
	\item[\rm (c$_5$)] For all $v \in A \cup B$ we have $d_H(v) \le \epszero n$. 
\end{itemize}
Then there exists a decomposition of $H$ into edge-disjoint Hamilton exceptional system candidates $F_1, \dots, F_{\alpha n }$
such that $e(F_s) = 4$ for all $s \le \eta \alpha n$ and $e(F_s) = 2$ for all $\eta \alpha n < s \le \alpha n $.
Furthermore, at least $\alpha n/200$ of the $F_s$ satisfy the following two properties:
\begin{itemize}
\item $d_{F_s}(w) =1 $ for all $w \in W_0$,
\item $e(F_s) = 2$.
\end{itemize}
\end{lemma}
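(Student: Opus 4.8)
The plan rests on one observation: the two bipartition classes of $H$ are exactly $A':=A\cup A_0$ and $B':=B\cup B_0$. Consequently every edge of $H$, hence of every subgraph of $H$, is an $A'B'$-edge; a maximal path in such a subgraph is an $A'B'$-path precisely when it has odd length; and any $2$-matching in $H$ is (after adding the vertices of $V_0$ as trivial paths) a Hamilton exceptional system candidate, with $b=2$. So the task becomes: partition $E(H)$ into parts of sizes $4$ (the first $\eta\alpha n$) and $2$, each a bipartite path system with maximum degree $\le 2$ on $V_0$ and $=1$ off $V_0$ and with an even positive number of odd-length components, and with at least $\alpha n/200$ of the size-$2$ parts consisting of one edge at each vertex of $W_0$. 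Write $q:=|W'|\in\{1,2,3\}$ and $W'=\{w_1,\dots,w_q\}$ with $W_0=\{w_1\}$ if $q=1$ and $W_0=\{w_1,w_2\}$ if $q\in\{2,3\}$; by (c2),(c3) we have $13\alpha n/25\le d_H(w_i)$ for all $i$, while $d_H(w_1),d_H(w_2)\le\alpha n$ and $d_H(w_3)\le 41\alpha n/40$.

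First I would set aside the special systems directly. For each $s$ in an index set $S$ with $|S|=\alpha n/200$ I would choose a $2$-matching $F_s$ consisting of one edge at each of $w_1,w_2$ with distinct endpoints outside $W'$ (if $q\ge 2$), or one edge at $w_1$ and one edge of $H-W'$ (if $q=1$); such a family exists edge-disjointly because each $w_i$ has $\ge 13\alpha n/25-3\gg\alpha n/200$ neighbours outside $W'$ and, for $q=1$, $e(H-W')=e(H)-d_H(w_1)\ge\alpha n\gg\alpha n/200$. At this stage I would also absorb the at most two edges of $H$ inside $W'$ into some of the $F_s$ (enlarging $S$ slightly if necessary), so that $H-W'$ has no such edge below. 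Let $H'':=H$ minus all reserved edges, $r'':=\alpha n-|S|=199\alpha n/200$ (an integer, since $\alpha n/200\in\mathbb N$), and $\eta'':=200\eta/199<1$ (using $\eta<199/200$). Then $e(H'')=2(1+\eta'')r''$, $\eta''r''=\eta\alpha n\in\mathbb N$, and the reduced degrees $a_i:=d_{H''}(w_i)$ satisfy $31r''/60\le a_1,a_2\le r''$ and $31r''/60\le a_3\le 31r''/30$; here the binding inequality is $13/25-1/200=103/200\ge 31\cdot 199/(60\cdot 200)$.

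Next I would feed $H''$ into the mechanism of Lemma~\ref{matrix} with $r''$ parts. Since $\Delta(H''-W')\le\max\{\eps_0 n,\ \alpha n-\alpha n/150\}<r''$ by (c4),(c5) (bounding vertices of $V_0\setminus W'$ via a vertex of $W_0$ in (c4)), K\"onig's theorem and Proposition~\ref{prop:matchingdecomposition} decompose $H''-W'$ into $r''$ matchings $M_1,\dots,M_{r''}$ of almost equal size; as $e(H''-W')\le 199\alpha n/100=2r''$, each $M_j$ has at most two edges. With $c_j:=e(M_j)$, reordered so $c_1\ge\dots\ge c_{r''}$, Lemma~\ref{matrix}(i) holds; $\sum_i a_i+\sum_j c_j=e(H'')=2(1+\eta'')r''$ gives (ii); (iii) was checked. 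Lemma~\ref{matrix} then outputs $a_{i,j}\in\{0,1,2\}$ with $\sum_j a_{i,j}=a_i$, with $c_j+\sum_i a_{i,j}$ equal to $4$ for $j\le\eta\alpha n$ and to $2$ otherwise, and with at least $2-c_j$ indices $i$ such that $a_{i,j}=1$. Processing $w_1,\dots,w_q$ in turn I would distribute the edges at $w_i$ so that $F_j:=M_j+(\text{edges assigned so far})$ receives exactly $a_{i,j}$ of them, each with a fresh far endpoint (not in $V(M_j)$, not already an endpoint of a $w_{i'}$-edge of $F_j$ with $i'<i$): this is a perfect matching between $N_{H''}(w_i)$ and the $a_i$ slots, supplied by Hall's theorem because $w_i$ has $\ge 13\alpha n/25$ neighbours, only $O(\eps_0 n)$ of them are problematic (treat the few high-degree vertices of $V_0\setminus W'$ first), and each partial $F_j$ meets only $O(1)$ vertices, so both sides have minimum degree $>a_i/2$. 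Finally, each $F_j$ is a disjoint union of $c_j$ single edges of $M_j$, a $K_2$ at $w_i$ for each $i$ with $a_{i,j}=1$, and a $2$-path $u\,w_i\,u'$ for each $i$ with $a_{i,j}=2$; so $e(F_j)=c_j+\sum_i a_{i,j}\in\{2,4\}$ (value $4$ exactly for $j\le\eta\alpha n$) and $b(F_j)=c_j+|\{i:a_{i,j}=1\}|\ge 2$ by Lemma~\ref{matrix}(c), which is even since $b(F_j)=e(F_j)-2|\{i:a_{i,j}=2\}|$; thus $F_j$ is a Hamilton exceptional system candidate. Adjoining the $|S|$ reserved $2$-matchings and relabelling (size-$4$ systems first) yields the conclusion.

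The hardest part will be the bookkeeping in the application of Lemma~\ref{matrix}: one must keep the count $\sum_i a_i+\sum_j c_j=2(1+\eta'')r''$ exact, produce enough ``special'' size-$2$ systems (which is why $\alpha n/200$ of them are reserved up front, and why the value $199/200$ and the divisibility $\alpha n/200\in\mathbb N$ in the hypotheses matter), and dispose of the at most two edges of $H$ inside $W'$, which are incident to two of the $w_i$ and would otherwise be double-counted and could force a matching $M_j$ of size $3$. The relevant inequalities ($13/25-1/200\ge 31\cdot 199/(60\cdot 200)$, and $\eta<199/200\Rightarrow\eta''<1$) are essentially tight, so these choices are forced rather than free. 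A second delicate point is the edge-distribution step: because \emph{all} edges at a high-degree $w_i$ must be placed, the only slack is in choosing their far endpoints, so one has to deal first with the (at most $\eps_0 n$) vertices of $V_0\setminus W'$ of large degree, which would otherwise fuse with the $M_j$'s into longer paths and spoil the parity of $b(F_j)$.
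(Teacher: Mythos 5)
Your overall architecture coincides with the paper's: reserve $\alpha n/200$ special $2$-matchings meeting $W_0$ (absorbing $H[W']$ there), pass to $r''=199\alpha n/200$ and $\eta''=200\eta/199$, decompose the remainder of $H-W'$ into $r''$ matchings of size at most $2$ via K\"onig and Proposition~\ref{prop:matchingdecomposition}, apply Lemma~\ref{matrix}, and then distribute the edges at each $w_i$; all your numerical verifications ($103/200\ge 31\cdot 199/12000$, $e(H''-W')\le 2r''$, condition (ii) of Lemma~\ref{matrix}, the parity/positivity computation $b(F_j)=c_j+|\{i:a_{i,j}=1\}|$) match the paper's.

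The gap is in the distribution step. You insist that \emph{every} edge assigned from $w_i$ to a slot of $F_j$ have a fresh far endpoint, and you justify Hall's condition by ``both sides have minimum degree $>a_i/2$'', proposing to ``treat the few high-degree vertices of $V_0\setminus W'$ first''. This fails: a vertex $v\in N_{H''}(w_i)\cap(V_0\setminus W')$ may satisfy $d_{H''}(v)\ge a_i-\alpha n/600$ (conditions (c$_4$), (c$_5$) give no better bound), so $v$ can be an endpoint of an edge of $M_j$ for essentially every $j$ with $a_{i,j}>0$ — for instance when the $a_i/2$ indices with $a_{i,j}=2$ all correspond to matchings $M_j$ containing $v$, which nothing in the construction rules out. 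Then no fresh slot exists for $v$ at all, whether you treat it first or last; the degree bound $d_Q(v)\ge a_i-2d_{H''}(v)$ is vacuous. The paper's resolution is a genuinely different two-pass assignment: in a first pass it sends edges $w_iv$ with $v\in V_0$ preferentially into slots with $a_{i,j}=2$ \emph{without} requiring freshness (so $v$ may attain degree $2$ in $F_j$, which (ESC2) permits), and only the remaining edges are placed by Hall's theorem with fresh endpoints; the point is that the resulting set $N_r$ of unassigned neighbours either contains no $V_0$-vertices or sees only multiplicity-one slots, which rescues the degree count $d_Q(v)\ge|V'|-d_{H_1}(v)\ge\eps_0 n$ via (c$_4$). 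Note also that your stated reason for wanting freshness — that fusion with an $M_j$-edge would ``spoil the parity of $b(F_j)$'' — is not quite right: fusing a $2$-path $u'w_iv$ with an existing edge $uv$ turns a length-$1$ $A'B'$-path into a length-$3$ one and leaves $b$ unchanged, which is exactly why the paper can afford non-fresh endpoints in the $a_{i,j}=2$ slots; the real danger is a non-fresh endpoint in an $a_{i,j}=1$ slot, where $b_j$ would \emph{decrease} by one rather than increase, threatening positivity (though not parity). Without the paper's first pass (or an equivalent device), your Hall argument does not go through.
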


Roughly speaking, the idea of the proof is first to find the $F_s$ which satisfy the final two properties.
Let $H_1$ be the graph obtained from $H$ by removing the edges in all these $F_s$. We will
decompose $H_1-W'$ into matchings $M_j$ of size at most two.
Next, we extend these matchings into Hamilton exceptional system candidates $F_j$ using Lemma~\ref{matrix}.
In particular, if $e(M_j)<2$, then we will use one or more edges incident to $W'$ to ensure that
the number of $A'B'$-connections is positive and even, as required by (ESC4).
(Note that it does not suffice to ensure that the number of $A'B'$-edges is positive and even for this.)

\begin{proof}
Set $H':=H - W'$, $W_0 =: \{w_1, w_{|W_0|} \}$ and $W' =: \{ w_1, \dots, w_{|W'|} \}$.
Hence, if $|W'| = 3$, then $W' \setminus W_0 = \{ w_3 \}$. Otherwise $W'=W_0$.

We will first construct $e_{H}(W')$ Hamilton exceptional system candidates $F_s$, such that each of them is a matching
of size two and together they cover all edges in $H[W']$. So suppose that $e_{H}(W')>0$. Thus $|W'| =2$ or $|W'| = 3$. 
If $|W'| = 2$, let $f$ denote the unique edge in $H[W']$. Note that
$$e(H')  \ge e(H) - (d_H(w_1) + d_H(w_2) -1) \ge 2(1+\eta)\alpha n-(2\alpha n-1)\ge 1$$
by (c$_1$) and~(c$_3$). So there exists an edge $f'$ in $H'$. Therefore, $M'_1 : = \{f,f'\}$ is a matching.
If $|W'| = 3$, then $e_{H}(W') \le 2$ as $H$ is bipartite. Since by (c$_2$) each $w \in W'$ satisfies $d_H(w) \ge 13\alpha n / 25$, it
is easy to construct $e_H(W')$ $2$-matchings $M'_1,M'_{e_{H}(W')}$ such that $d_{M_s'}(w) = 1$ for all $w \in W'$ and all $s \le e_H(W')$
and such that $H[W'] \subseteq M'_1 \cup M'_{e_{H}(W')}$. Set $F_{ \alpha  n - s+1} := M'_s$ for all $s \le e_H(W')$ (regardless of the size of $W'$).

We now greedily choose $\alpha n/200-e_{H}(W')$ additional $2$-matchings $F_{199\alpha n/200+1},\dots,F_{ \alpha n - e_{H}(W')}$ in $H$ which are
edge-disjoint from each other and from $F_{ \alpha n },F_{ \alpha n - e_{H}(W') +1}$ and such that $d_{F_s}(w)=1$
for all $w\in W_0$ and all $199\alpha n/200<s\le  \alpha n - e_{H}(W')$. To see that this can be done, recall that by (c$_2$) we have
$d_H(w)\ge 13 \alpha n / 25$ for all $w\in W'$ (and thus for all $w\in W_0$) and that (c$_1$) and (c$_3$) together imply that
$e(H-W_0)\ge 2(1+ \eta)\alpha n-\alpha n>\alpha n$ if $|W_0|=1$.

Thus $F_{199\alpha n/200+1},\dots,F_{\alpha n}$ are Hamilton exceptional system candidates
satisfying the two properties in the `furthermore part' of the lemma.
Let $H_1$ and $H'_1$ be the graphs obtained from $H$ and $H'$ by deleting all the $\alpha n/100$ edges in these Hamilton exceptional system candidates.
Set
\begin{align}\label{eq:eta'}
r : = 199\alpha n/200 \ \ \ \ \ \  \text{and} \ \ \ \ \ \ \eta' := \eta \alpha n/r=200\eta/199.
\end{align}
Thus $0\le \eta'<1$ and we now have%
   \COMMENT{Note that $e(H)-\alpha n/100=2 ( 1 + \eta )\alpha n -\alpha n/100=2(199/200+\eta )\alpha n=2 ( 1 + \eta' )199\alpha n/200$.}
\begin{align}
	H_1[W'] & = \emptyset, 
	& e(H_1) & = e(H)-\alpha n/100=2 ( 1 + \eta' )r 
	& \text{and} &
	& e(H'_1) & \le 2 r.
	\label{eqn:H}
\end{align}
(To verify the last inequality note that $e(H'_1) \le e(H-W')\le 2 r$ by~(c$_2$).)
Also, (c$_2$) and (c$_4$) together imply that for all $w \in W'$ and all $v \in V(H) \setminus W'$ we have
\begin{align}
d_{H_1}(w) & \ge \alpha n/2\ge 4 \epszero n  & \text{and} & &d_{H_1}(w) - d_{H_1}(v) &\ge 2 \epszero n . \label{eqn:H2}
\end{align}
Moreover, by (c$_2$) and (c$_3$), each $w \in W_0$ satisfies
\begin{align}\label{eq:W'H1}
	31r/60 & \le 13\alpha n/25- \alpha n/200\le d_{H}(w)-d_{H-H_1}(w)=  d_{H_1}(w)\nonumber \\
 & \le \alpha n-\alpha n/200= r .
\end{align}
Similarly, if $|W'|=3$ and so $w_3$ exists, then
\begin{align}\label{eq:W'H1w3}
	31r/60 & \le 13\alpha n/25- \alpha n/200 \le d_{H}(w_3)-d_{H-H_1}(w_3)=d_{H_1}(w_3)\nonumber \\
& \le 41\alpha n/40\le 31r/30.
\end{align}
(\ref{eqn:H2}) and~(\ref{eq:W'H1}) together imply that $d_{H'_1}(v) \le d_{H_1}(v)< d_{H_1}(w_1)\le r $ for all $v\in V(H)\setminus W'$.
Thus $\chi'(H'_1)\le \Delta(H'_1)\le r$.
Together with Proposition~\ref{prop:matchingdecomposition} this implies that $H'_1$ can be decomposed into $r$ edge-disjoint matchings 
$M_1, \dots ,M_{r}$
such that $| m_j- m_{j'}| \le 1$ for all $1 \le j , j' \le r$, where we set $m_j := e(M_j)$.

Our next aim is to apply Lemma~\ref{matrix} with $|W'|$, $d_{H_1}(w_i)$, $m_j$, $\eta'$ playing the roles of $q$, $a_i$, $c_j$, $\eta$
(for all $i\le |W'|$ and all $j \le r$). Since $\sum_{j\le r} m_j=e(H'_1)\le 2r$ by (\ref{eqn:H}) and since
$| m_j- m_{j'}| \le 1$, it follows that $m_j\in \{0,1,2\}$ for all $j\le r$.
Moreover, by relabeling the matchings $M_j$ if necessary, we may assume that $m_1 \ge m_2 \ge \dots \ge m_r$. Thus
condition~(i) of Lemma~\ref{matrix} holds. (ii) holds too since $\sum_{i\le |W'|} d_{H_1}(w_i)+\sum_{j\le r} m_j=e(H_1)=2(1+\eta')r$
by~(\ref{eqn:H}). Finally, (iii) follows from (\ref{eq:W'H1}) and~(\ref{eq:W'H1w3}).
Thus we can indeed apply Lemma~\ref{matrix} in order to obtain numbers $a_{i,j}  \in \{0,1,2\}$ (for all $i \le |W'|$ and $ j \le r $)
which satisfy the following properties:
\begin{itemize}
\item[(a$'$)]	$\sum_{j \le r } a_{i,j} = d_{H_1}(w_i)$ for all $i\le |W'|$.
	\item[(b$'$)]	$ m_j + \sum_{i \le |W'|} a_{i,j} = 4 $ for all $j \le \eta' r$ and
	         $m_j + \sum_{i \le |W'|} a_{i,j} = 2$ for all $\eta' r < j \le  r$.
\item[(c$'$)] 	If $m_j < 2$ then there exist at least $2 - m_j$ indices $i$ such that $a_{i,j} = 1$.
\end{itemize}
For all $j\le r$, our Hamilton exceptional system candidate $F_j$ will consist of the edges in $M_j$
as well as of $a_{i,j}$ edges of $H_1$ incident to $w_i$ (for each $i\le |W'|$).   
So let $F_j^0: = M_j$ for all $j \le r$. For each $i =1,\dots, |W'|$ in turn, we will now assign the edges of $H_1$ incident with $w_i$
to $F_1^{i-1},\dots,F_r^{i-1}$ such that the resulting graphs $F_1^{i},\dots,F_r^{i}$ satisfy the following properties: 
\begin{itemize}
	\item[($\alpha_i$)] If $i \ge 1$, then $e(F_j^i) - e(F_j^{i-1}) = a_{i,j}$.
	\item[($\beta_i$)] $F^i_j$ is a path system. Every vertex $v \in A \cup B$ is incident to at most one edge of $F_j^i$.
For every $v \in V_0 \setminus W'$ we have $d_{F_j^i}(v) \le 2$. If $e(F_j^i) \le 2$, we even have $d_{F_j^i}(v) \le 1$.
    \item[($\gamma_i$)] Let $b_{j}^i$ be the number of vertex-disjoint maximal paths in $F_j^i$ with one endpoint in $A'$ and the other in $B'$.
If $a_{i,j}=1$ and $i \ge 1$, then $b_j^i = b_j^{i-1}+1$. Otherwise $b_j^i = b_j^{i-1}$. 
\end{itemize}
We assign the edges of $H_1$ incident with $w_i$ to $F_1^{i-1},\dots,F_r^{i-1}$ in two steps.
In the first step, for each index $j\le r$ with $a_{i,j}=2$ in turn, we assign an edge of $H_1$ between $w_i$ and $V_0$ to
$F_j^{i-1}$ whenever there is such an edge left. More formally, to do this, we set $N_0 := N_{H_1}(w_i)$.
For each $j \le r$ in turn, if $a_{i,j} = 2$ and $N_{j-1} \cap V_0 \ne \emptyset$, then we choose a vertex
$v \in N_{j-1} \cap V_0$ and set $F_j':= F_j^{i-1} + w_i v$, $N_j: = N_{j-1} \setminus \{v\}$ and $a'_{i,j} := 1$.
Otherwise, we set $F_j':= F_j^{i-1}$, $N_j:= N_{j-1}$ and $a'_{i,j} := a_{i,j}$.

Therefore, after having dealt with all indices $j\le r$ in this way, we have that
\begin{align}
\textrm{either $a'_{i,j} \le 1$ for all $j \le r$ or $ N_r \cap V_0 = \emptyset$ (or both).} \label{V0cond}
\end{align}
Note that by (b$'$) we have $e(F_j') \le m_j + \sum_{i'\le i} a_{i',j} \le 4$ for all $j\le r$.
Moreover, (a$'$) implies that $|N_r| = \sum_{j \le r}  a'_{i,j}$.
Also, $N_r  \setminus V_0=N_{H_1}(w_i) \setminus V_0$, and so $N_{H_1}(w_i) \setminus N_r  \subseteq V_0$.
Hence
\begin{align}
|N_r| & = 
| N_{H_1}(w_i) | - | N_{H_1}(w_i) \setminus N_r | 
\ge d_{H_1}(w_i) - |V_0| \ge d_{H_1}(w_i) - \epszero n. 
\label{v'1}
\end{align}

In the second step, we assign the remaining edges of $H_1$ incident with $w_i$ to $F'_1,\dots,F'_r$.
We achieve this by finding a perfect matching $M$ in a suitable auxiliary graph.

\medskip

\noindent
{\bf Claim.} \emph{Define a graph $Q$ with vertex classes $N_r$ and $V'$ as follows:
$V'$ consists of $a'_{i,j}$ copies of $F_j'$ for each $ j \le r$.
$Q$ contains an edge between $v \in N_r$ and $F'_j \in V'$ if and only $v$ is not an endpoint of an edge in $F'_j$.
Then $Q$ has a perfect matching~$M$.}

\smallskip

\noindent
To prove the claim, note that 
\begin{align}\label{eqn:V1=V2}
	|V'| = \sum_{j \le r} a'_{i,j} = |N_r|  \overset{\eqref{v'1}}{\ge} d_{H_1}(w_i) - \epszero n.  
\end{align}
Moreover, since $F'_j\subseteq H$ is bipartite and so every edge of $F'_j$ has at most one endpoint in $N_r$,
it follows that
\begin{align}
d_{Q} (F'_j) \geq |N_r| - e(F_j')  \ge |N_r| -4 \label{eqn:dQF_j}
\end{align}
for each $F_j' \in V'$.
Consider any $v \in N_r$. Clearly, there are at most $d_{H_1}(v)$ indices $j\le r$ such that $v$ is an endpoint of an
edge of $F_j'$. If $v \in N_r \setminus V_0 \subseteq A \cup B$, then by (c$_5$), $v$ lies in at most
$2 d_{H_1}(v) \le 2 d_{H}(v)\le 2 \epszero n$ elements of $V'$. (The factor~2 accounts for the fact that each $F'_j$ occurs in
$V'$ precisely $a'_{i,j}\le 2$ times.) So
$$
d_{Q} (v) \geq |V'| - 2\epszero n  \overset{\eqref{eqn:V1=V2}}{\geq} d_{H_1}(w_i)- 3\epszero  n  \overset{\eqref{eqn:H2}}{\geq} \epszero n.
$$
If $v \in N_r \cap V_0$, then \eqref{V0cond} implies that $a'_{i,j} \le  1$ for all $j\le r$.
Thus
\begin{align*}
d_{Q} (v) \geq |V'| - d_{H_1}(v) \overset{\eqref{eqn:V1=V2}}{\geq} ( d_{H_1}(w_i) - d_{H_1}(v) )- \epszero n
\overset{\eqref{eqn:H2}}{\geq} 2 \epszero n  - \epszero n = \epszero n .
\end{align*}
To summarize, for all $v \in N_r$ we have $d_{Q} (v) \geq \epszero n $.
Together with \eqref{eqn:dQF_j} and the fact that $|N_r| = |V'|$ by~\eqref{eqn:V1=V2}
this implies that $Q$ contains a perfect matching $M$ by Hall's theorem.
This proves the claim.

\medskip

For each $j\le r$, let $F^i_j$ be the graph obtained from $F'_j$ by adding the edge $w_iv$ whenever the perfect matching~$M$ 
(as guaranteed by the claim) contains
an edge between $v$ and $F'_j$.

Let us now verify ($\alpha_i$)--($\gamma_i$) for all $i \le |W'|$. Clearly, ($\alpha_0$)--($\gamma_0$) hold and $b_j^0=m_j$.
Now suppose that $i \ge 1$ and that ($\alpha_{i-1}$)--($\gamma_{i-1}$) hold.
Clearly, ($\alpha_i$) holds by our construction of $F^i_1,\dots,F^i_r$. 
Now consider any $j\le r$. If $a_{i,j} = 0$, then ($\beta_i$) and ($\gamma_i$) follow from ($\beta_{i-1}$) and ($\gamma_{i-1}$).
If $a_{i,j} = 1$, then the unique edge in $F^i_j-F_j^{i-1}$ is vertex-disjoint from any edge of $F_j^{i-1}$ (by the definition of $Q$)
and so ($\beta_i$) holds.
Moreover, $b_j^i = b_j^{i-1}+1$ and so ($\gamma_i$) holds. 
So suppose that $a_{i,j} = 2$. Then the unique two edges in $F_j^{i} - F_j^{i-1}$ form a path $P=v'w_iv''$ of length two with internal vertex~$w_i$.
Moreover, at least one of the edges of $P$, $w_iv''$ say, was added to $F_j^{i-1}$ in the second step of our construction of $F^i_j$.
Thus $d_{F^i_j}(v'')=1$. The other edge $w_iv'$ of $P$ was either added in the first or in the second step.
If $w_iv'$ was added in the second step, then $d_{F^i_j}(v')=1$. Altogether this shows that in this case
($\gamma_i$) holds and ($\beta_i$) follows from ($\beta_{i-1}$).
So suppose that $w_iv'$ was added to $F_j^{i-1}$ in the first step of our construction of $F^i_j$.
Thus $v'\in V_0\setminus W'$. But since $a_{i,j} = 2$, (b$'$) implies that $e(F_{j}^{i-1}) = m_j + \sum_{i' < i } a_{i',j} \le 2$.
Together with ($\beta_{i-1}$) this shows that $d_{F_{j}^{i-1}}(v) \le 1$ for all $v \in V_0 \setminus W'$.
Hence $d_{F_{j}^{i-1}}(v') \le 1$ and so $d_{F_{j}^{i}}(v') \le 2$. Together with ($\beta_{i-1}$) this implies ($\beta_i$).
(Note that if $e(F_j^{i-1})=0$, then the above argument actually shows that $d_{F_j^i(v')} \le 1$, as required.)
Moreover, the above observations also guarantee that ($\gamma_i$) holds. Thus $F^i_1,\dots,F^i_r$ satisfy ($\alpha_i$)--($\gamma_i$).

After having assigned the edges of $H_1$ incident with $w_i$ for all $i\le |W'|$, we have obtained graphs
$F^{|W'|}_1,\dots,F^{|W'|}_r$. Let $F_j:=F^{|W'|}_j$ for all $j\le r$. Note that by ($\gamma_{|W'|}$) for all $j\le r$
the number of vertex-disjoint maximal $A'B'$-paths in $F_j$ is precisely $b^{|W'|}_j$.

We now claim that $b_j^{|W'|}$ is positive and even.
To verify this,  recall that $b_j^0=m_j$. Let ${\rm odd}_j$ be the number of $a_{i,j}$ with $a_{i,j}=1$ and $i \le |W'|$.
So $b_j^{|W'|}=m_j+{\rm odd}_j$.
Together with (c$'$) this immediately implies that $b_j^{|W'|} \ge 2$.
Moreover,  since $a_{i,j} \in \{0,1,2 \}$ we have
$$
b_j^{|W'|}=m_j+{\rm odd}_j=m_j+\sum_{i \le |W'|,\ a_{i,j} {\rm \ is\ odd}} a_{i,j}.
$$
Together with (b$'$) this now implies that $b_j^{|W'|}$ is even. This proves the claim.

Together with (a$'$), (b$'$) and ($\alpha_i$), ($\beta_i$)
for all $i\le |W'|$ this in turn shows that $F_1,\dots,F_r$ form a decomposition of $H_1$
into edge-disjoint Hamilton exceptional system candidates with
$e(F_j)=4$ for all $j\le \eta' r$ and $e(F_j)=2$ for all $\eta' r<j\le r$. Recall that $\eta' r=\eta \alpha n$
by~(\ref{eq:eta'}) and that we have already constructed Hamilton exceptional system candidates $F_{199\alpha n/200+1},\dots,F_{ \alpha n}$
which satisfy the `furthermore statement' of the lemma, and thus in particular consist of precisely two edges.
This completes the proof of the lemma.
\end{proof}

\subsubsection{Proof of Lemma~\ref{lma:BESdecomcritical}}

We will now combine Lemmas~\ref{lma:movecritical},~\ref{lma:BESdecomprelim2} and~\ref{BEScons} in order
to prove Lemma~\ref{lma:BESdecomcritical}.
This will complete the construction of the required exceptional sequences in the case when $G$ is both critical and $e(G[A',B'])\ge D$.

\removelastskip\penalty55\medskip\noindent{\bf Proof of Lemma~\ref{lma:BESdecomcritical}. }
Let $G^{\diamond}$ be as defined in Lemma~\ref{lma:BESdecomcritical}(iv).
Our first aim is to decompose $G^{\diamond}$ into suitable `localized' subgraphs via Lemma~\ref{lma:movecritical}.
Choose a new constant $\eps'$ such that $\eps \ll \eps' \ll \lambda, 1/K$ and
define $\alpha$ by
\begin{equation} \label{alphaeqD}
2 \alpha n  :=\frac{ D - \phi n}{K^2}.
\end{equation}
Recall from Lemma~\ref{lma:BESdecomcritical}(ii) that $D = (n-1)/2$ or $D = n/2-1$.
Together with our assumption that $\phi \ll 1$ this implies that%
   \COMMENT{$\alpha= \frac{ D/n - \phi}{2K^2}\ge \frac{ 1/2-1/n - \phi}{2K^2}=\frac{ 1-2/n - 2\phi}{4K^2}$.}
\begin{equation}\label{alphahier3}
\frac{ 1-2/n - 2\phi}{4K^2}\le \alpha \le  \frac{ 1 - 2\phi}{4K^2} \ \ \ \  \ \ \ \ \text{and} \ \ \ \  \ \ \ \ \eps \ll \eps'  \ll \lambda, 1/K, \alpha \ll 1.
\end{equation}
Note that by Lemma~\ref{lma:BESdecomcritical}(ii) and (iii) we have $ e_{G^{\diamond}}(A',B') \ge D - \phi n =  2 K^2 \alpha n$.
Together with Lemma~\ref{critical}(iii) this implies that
\begin{align}
	2 K^2 \alpha n \le  e_{G^{\diamond}}(A',B') \le e_G(A',B') \le 17D/10+5
\stackrel{(\ref{alphaeqD})}{\le} {18 K^2 \alpha n}/{5} \stackrel{(\ref{alphahier3})}{<} n . \label{eW'}
\end{align}
Moreover, recall that by Lemma~\ref{lma:BESdecomcritical}(i) and (iii) we have
\begin{equation}\label{eq:degvV0}
d_{G^{\diamond}}(v) = 2K^2 \alpha n \ \ \ \  \ \ \text{for all } v \in V_0.
\end{equation}
Let $W$ be the set of all those vertices $w\in V(G)$ with $d_{G[A',B']}(w)\ge 11D/40$. So $W$ is as defined in Lemma~\ref{critical}
and $1\le |W|\le 3$ by Lemma~\ref{critical}(i). Let $W'\subseteq V(G)$ be as guaranteed by Lemma~\ref{critical}(v). Thus $W\subseteq W'$, $|W'|\le 3$,
\begin{align}\label{eq:degrees}
d_{G[A',B']}(w') & \ge \frac{21D}{80}, & 
d_{G[A',B']}(v) & \le \frac{11D}{40} &
{\rm and} & &
d_{G[A',B']}(w') - d_{G[A',B']}(v)& \ge \frac{D}{240}.
\end{align}
for all $w'\in W'$ and all $v\in V(G)\setminus W'$. In particular, $W'\subseteq V_0$. (This follows since
Lemma~\ref{lma:BESdecomcritical}(iii),(iv) and (ESch3) together imply that
$d_{G[A',B']}(v) = d_{G^\diamond [A',B']}(v)+d_{G_0[A',B']}(v)\le \eps_0 n+e_{G_0}(A',B')\le \eps_0 n+\phi n$ for all $v\in A\cup B$.)
Let $w_1, w_2, w_3$ be vertices of $G$ such that
$$d_{G[A',B']}(w_1) \ge d_{G[A',B']}(w_{2}) \ge d_{G[A',B']}(w_{3}) \ge d_{G[A',B']}(v)$$ for
all $v \in V(G) \setminus \{w_1,w_2,w_3\}$, where $w_1$ and $w_2$ are as in Lemma~\ref{lma:BESdecomcritical}(v).
Hence $W$ consists of $w_1, \dots, w_{|W|}$ and $W'$ consists of $w_1, \dots, w_{|W'|}$.
Set $W_0 : = \{w_1,w_2\} \cap W'$. Since $d_{G_0}(v)=\phi n$ for each $v\in V_0$ (and thus for each $v\in W_0$), each $w \in W_0$ satisfies%
\COMMENT{Previously, we had $15 K^2 \alpha n /38 \stackrel{(\ref{alphaeqD})}{\le} 21D/80-\phi n $, but the first inequality can be replaced by $K^2$,
as this is what we actually use in Lemma~\ref{lma:movecritical}}
\begin{align}
\label{degW0} 
K^2 {\le} 21D/80-\phi n \stackrel{(\ref{eq:degrees})}{\le}d_{G^{\diamond}[A',B']}(w) \le K^2 \alpha n \stackrel{(\ref{eW'})}{\le} e_{G^{\diamond}}(A',B')/2.
\end{align}
(Here the third inequality follows from Lemma~\ref{lma:BESdecomcritical}(v).)
Apply Lemma~\ref{lma:movecritical} to $G^{\diamond}$ in order to obtain a decomposition
of $G^{\diamond}$ into edge-disjoint spanning subgraphs $H(i,i')$ and $H''(i,i')$ (for all $1\le i,i' \le K$) which satisfy the following
properties, where $G'(i,i'):=H(i,i')+H''(i,i')$:
\begin{itemize}
\item[\rm (b$'_1$)] Each $H(i,i')$ contains only $A_0A_i$-edges and $B_0B_{i'}$-edges.
\item[\rm (b$'_2$)] $H''(i,i')\subseteq G^{\diamond}[A',B']$.
Moreover, all but at most  $20 \eps n/K^2$ edges of $H''(i,i')$ lie in $G^{\diamond}[A_0 \cup A_i, B_0 \cup B_{i'}]$.
\item[\rm (b$'_3$)] $e(H''(i,i')) =  2 \left\lceil e_{G^{\diamond}}(A',B') / (2 K^2) \right\rceil$ or $e(H''(i,i')) =  2 \left\lfloor e_{G^{\diamond}}(A',B') / (2 K^2) \right\rfloor$.
In particular, $2\alpha n \le e(H''(i,i')) \le 19 \alpha n /5$ by~\eqref{eW'}.
\item[\rm (b$'_4$)] $d_{H''(i,i')}(v )  =  ( d_{G^{\diamond}[A',B']}(v)  \pm 25 \eps n)/K^2$ for all $v \in V_0$.
\item[\rm (b$'_5$)] $d_{G'(i,i')}(v )  =  ( d_{G^\diamond}(v)  \pm 25 \eps n)/K^2=\left( 2 \alpha  \pm   25 \eps/K^2 \right) n$ for all $v \in V_0$ by~\eqref{eq:degvV0}.
\item[\rm (b$'_6$)] Each $w \in W_0$ satisfies $d_{H''(i,i')}(w) \le \lceil d_{G{^{\diamond}}[A',B']}(w) / K^2 \rceil \le \alpha n $ by~\eqref{degW0}.
\end{itemize}
Our next aim is to apply Lemma~\ref{lma:BESdecomprelim2} to each $H''(i,i')$ to obtain suitable Hamilton exceptional system candidates
(in particular almost all of them will be `localized').
So consider any $ 1 \le i,i' \le K$%
\COMMENT{AL: added $1 \le$}
 and let $H'':=H''(i,i')$. We claim that there exists $0 \le \eta \le  9/10$
such that $H''$ satisfies the following conditions (which in turn imply conditions (c$_1$)--(c$_5$) of Lemma~\ref{lma:BESdecomprelim2}): 
\begin{itemize}
	\item[\rm (c$_1'$)] $e(H'') = 2(1+ \eta)\alpha n$ and $\eta \alpha n\in \mathbb{N}$.
	\item[\rm (c$_2'$)] $e(H''- W' ) \le 199 \alpha n/100$ and $d_{H''}(w) \ge 13 \alpha n / 25 $ for all $w \in W'$.
	\item[\rm (c$_3'$)] $d_{H''}(w) \le \alpha n$ for all $w \in W_0$ and $d_{H''}(w') \le 41 \alpha n /40$ for all $w'\in W'\setminus W_0$.%
	\COMMENT{Recall that if $W_0\neq W'$ then $W_0=\{w_,w_2\}$ and $W'=\{w_,w_2,w_3\}$.}
	\item[\rm (c$_4'$)] For all $w \in W'$ and all $v \in V(G) \setminus W'$ we have $d_{H''}(w) - d_{H''}(v) \ge \alpha n/150$.
	\item[\rm (c$_5'$)] For all $v \in A \cup B$ we have $d_{H''}(v) \le \epszero n$.
\end{itemize}
Clearly, (b$_3'$) implies the first part of (c$_1'$). Since $e(H'')$ is even by~(b$_3'$) and $\alpha n\in \mathbb{N}$,
it follows that $\eta \alpha n\in \mathbb{N}$.
To verify the first part of (c$_2'$), note that (b$'_3$) and (b$'_4$) together imply that
\begin{align*}
e(H''- W' ) & = e(H'')-\sum_{w\in W'} d_{H''}(w)+e(H''[W']) \\ & \le 
2 \left\lceil e_{G^{\diamond}}(A',B') / (2 K^2) \right\rceil- \sum_{w\in W'} (d_{G^\diamond[A',B']}(w) - 25 \eps n)/K^2 +3\\
& \le (e_{G^\diamond-W'}(A',B') +80\eps n)/K^2.
\end{align*}
Together with Lemma~\ref{critical}(iv) this implies that
$$e(H''- W' )\le (e_{G-W'}(A',B') +80\eps n)/K^2\le ((3D/4+5)+80\eps n)/K^2\le 199 \alpha n/100.$$
To verify the second part of (c$_2'$), note that by (\ref{eq:degrees}) and Lemma~\ref{lma:BESdecomcritical}(iii) each $w\in W'$
satisfies $d_{G^\diamond [A',B']}(w)\ge d_{G[A',B']}(w)-\phi n \ge 21D/80-\phi n$. Together with (b$_4'$)
this implies $d_{H''}(w) \ge 26\alpha n/50$. Thus (c$_2'$) holds. By (b$_6'$) we have $d_{H''}(w) \le \alpha n $ for all $w \in W_0$.
If $w' \in W' \setminus W_0$, then Lemma~\ref{lma:BESdecomcritical}(ii) implies $d_{G[A',B']}(w') \le D/2 \le 51 K^2 \alpha n /50$.
Thus, $d_{H''}(w') \le 41 \alpha n /40$ by~(b$_4'$). Altogether this shows that~(c$_3'$) holds.
(c$_4'$) follows from (\ref{eq:degrees}), (b$'_4$) and the fact that $d_{G^\diamond [A',B']}(v)\ge d_{G[A',B']}(v)-\phi n$ for all
$v\in V(G)$ by Lemma~\ref{lma:BESdecomcritical}(iii).
(c$_5'$) holds since $d_{H''}(v) \le d_{G^\diamond[A',B']}(v) \le \eps_0 n$ for all $v\in A\cup B$ by (ESch3).

Now we apply Lemma~\ref{lma:BESdecomprelim2} in order to decompose $H''$ into $\alpha n$ edge-disjoint Hamilton exceptional system candidates $F_1,\dots,F_{\alpha n}$ such that $e(F_s) \in \{2,4\}$
for all $s\le \alpha n$ and such that at least $\alpha n /200$ of $F_s$ satisfy $e(F_s)=2$ and $d_{F_s}(w)=1$ for all $w \in W_0$. 
Let
\begin{align*}
\gamma & := \alpha - \frac{\lambda}{K^2} & 
& \text{and} &
\gamma' & : = \frac{\lambda}{K^2}.
\end{align*}   
Recall that by (b$_2'$) all but at most $20 \eps n /K^2 \le \eps' n $ edges of $H''$ lie in $G^{\diamond}[A_0 \cup A_i, B_0 \cup B_{i'}]$.
Together with (\ref{alphahier3}) this ensures that we can relabel the $F_s$ if necessary to obtain $\alpha n $ edge-disjoint Hamilton exceptional system
candidates
$F_1(i,i'), \dots , F_{\gamma n}(i,i')$ and $F'_1(i,i'), \dots , F'_{\gamma ' n}(i,i')$ such that the following properties hold:
\begin{itemize}
	\item[(a$'$)] $F_s(i,i')$ is an $(i,i')$-HESC for every $s \le \gamma n$.
	Moreover, $\gamma ' n$ of the $F_s(i,i')$ satisfy $e(F_s(i,i')) = 2$ and $d_{F_s(i,i')}(w) = 1$ for all $w \in W_0$.%
	\item[(b$'$)] $e(F'_s(i,i')) = 2$ for all but at most $\eps' n$ of the $F'_s(i,i')$.
	\item[(c$'$)] $e(F_s(i,i')), e(F'_s(i,i'))\in \{2,4\}$.
\end{itemize}
For (b$'$) and the `moreover' part of (a$'$), we use  that $ \alpha n /200 - \eps' n \ge 2\lambda n/K^2 = 2\gamma'n$.
Our next aim is to apply Lemma~\ref{BEScons} with $G^\diamond$ playing the role of $G^*$ to extend the above exceptional system candidates into 
exceptional systems.
Clearly conditions (i) and (ii) of Lemma~\ref{BEScons} hold. (iii) follows from (b$'_1$). (iv) and (v) follow from (a$'$)--(c$'$).
(vi) follows from Lemma~\ref{lma:BESdecomcritical}(i),(iii). Finally, (vii) follows from (b$'_5$) since $G'(i,i')$ plays the role of $G^*(i,i')$.
Thus we can indeed apply Lemma~\ref{BEScons} to obtain a decomposition of $G^\diamond$ into $K^2\alpha n$
edge-disjoint Hamilton exceptional systems $J_1(i,i'),\dots,J_{\gamma n}(i,i')$ and $J'_1(i,i'),\dots,J'_{\gamma' n}(i,i')$
with parameter $\eps_0$, where $1\le i,i'\le K$, such that $J_s(i,i')$ is an $(i,i')$-HES which is a faithful extension of  $F_s(i,i')$ for all $s \le \gamma n$
and $J'_s(i,i')$ is a faithful extension of $F'_s(i,i')$ for all $s \le \gamma' n$.
Then the set $\mathcal{J}$ of all these exceptional systems is as required in Lemma~\ref{lma:BESdecomcritical}.
(Since $W_0$ contains $\{w_1,w_2\} \cap W$, the `moreover part' of (a$'$) implies the `moreover part' of Lemma~\ref{lma:BESdecomcritical}(b).)
\endproof


\subsection{The case when $e_G(A',B') < D$}\label{1factsec}

The aim of this section is to prove the following analogue of Lemma~\ref{lma:BESdecom} for the case when $e_G(A',B')< D$.
In this case, we do not need to prove any auxiliary lemmas first, as we can apply those proved in the other two cases
(Lemmas~\ref{BEScons} and~\ref{lma:movecritical}).

Recall that Proposition~\ref{prp:e(A',B')} implies that in the current case we have $n = 0 \pmod4$, $D = n/2-1$ and $|A'| = |B'| = n/2$.

\begin{lemma} \label{lma:PBESdecom}
Suppose that $0 <  1/n  \ll \epszero \ll  \eps \ll   \lambda, 1/K \ll 1$, that $0 \le \phi  \ll 1$ and that
$n/4, K, m, \lambda n/K^2, (n/2-1 - \phi n)/(2K^2) \in \mathbb{N}$.%
    \COMMENT{Previously also had that $\phi n \in \mathbb{N}$. But this follows since $n/2-1,K, (n/2-1 - \phi n)/(2K^2) \in \mathbb{N}$.}
Suppose that the following conditions hold:
\begin{itemize}
	\item[\rm (i)] $G$ is an $(n/2 -1)$-regular graph on $n$ vertices.
	\item[\rm (ii)] $\mathcal{P}$ is a $(K, m, \epszero)$-partition of $V(G)$ such that
$\Delta(G[A',B']) \le n/4$ and $|A'| = |B'| = n/2$.%
\COMMENT{Probably could actually assume that $\Delta(G[A',B']) \le n/4-1$. We also have $e_G(A',B') < D$ but this statement
is not needed explicitly. This statement is incorporated in (v)}
	\item[\rm (iii)] $G_0$ is a subgraph of $G$ such that $G[A_0]+G[B_0] \subseteq G_0$ and $d_{G_0}(v) = \phi n $ for all $v \in V_0$.
	\item[\rm (iv)] Let $G^{\diamond} := G - G[A]  - G[B] - G_0$. $e_{G^\diamond} (A',B')$ is even and
$(G^{\diamond}, \mathcal{P})$ is a $(K, m, \epszero,\eps)$-exceptional scheme.
	\item[\rm (v)] $\Delta(G^{\diamond}[A',B']) \le e_{G^{\diamond}}(A',B')/2  \le (n/2 - 1-  \phi n ) /2$.
\end{itemize}
Then there exists a set $\mathcal{J}$ consisting of $(n/2-1-\phi n)/2$ edge-disjoint exceptional systems in $G^{\diamond}$
which satisfies the following properties:
\begin{itemize}
    \item[\rm (a)] Together the exceptional systems in $\mathcal{J}$ cover all edges of $G^\diamond$.
Each $J_s$ in $\mathcal{J}$%
    \COMMENT{Daniela: added in $\mathcal{J}$}
is either a Hamilton exceptional system with $e_{J_s}(A',B') = 2$ or a matching exceptional system.
	\item[\rm (b)] For all $1\le i,i' \le K$, the set $\mathcal{J}$ contains $(n/2-1-(\phi n+2 \lambda))/(2K^2)$ $(i,i')$-ES.
\end{itemize}
\end{lemma}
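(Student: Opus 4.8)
The plan is to follow the same four-step strategy used in the proofs of Lemmas~\ref{lma:BESdecom} and~\ref{lma:BESdecomcritical}, reusing the auxiliary lemmas already available, namely Lemma~\ref{lma:movecritical} for Step~1 and Lemma~\ref{BEScons} for Steps~3 and~4; the only genuinely new work is in Step~2, where we must decompose the ``localized'' pieces $H''(i,i')$ into exceptional system candidates, and here the key simplification is that we are allowed to produce \emph{matching} exceptional system candidates (with no $A'B'$-edges at all) as well as Hamilton ones with exactly two $A'B'$-edges. First I would set $\alpha$ by $2\alpha n := (n/2-1-\phi n)/K^2$; since $D = n/2-1$ and $\phi \ll 1$ we get $\alpha = (1\pm o(1))/(4K^2)$, in particular $1/(7K^2) \le \alpha < 1/K^2$ as needed for Lemma~\ref{BEScons}. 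Note $e_{G^\diamond}(A',B') \le e_G(A',B') < D < n \le 2n$, so the hypothesis $e_{G^\diamond}(A',B') \le 2n$ of Lemma~\ref{lma:movecritical} holds; we also have $e_{G^\diamond}(A',B')$ even by (iv), and we may take $W_0 = \emptyset$ (there is no high-degree vertex to track in this case, and (v) gives the bound $\Delta(G^\diamond[A',B']) \le e_{G^\diamond}(A',B')/2$ which is all we need), so condition~\eqref{eq:degw} is vacuous.

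\textbf{Step 1.} Apply Lemma~\ref{lma:movecritical} to $G^\diamond$ (with $W_0 = \emptyset$) to decompose it into edge-disjoint spanning subgraphs $H(i,i')$ and $H''(i,i')$ for all $1 \le i,i' \le K$, where each $H(i,i')$ contains only $A_0A_i$- and $B_0B_{i'}$-edges, $H''(i,i') \subseteq G^\diamond[A',B']$ with all but at most $20\eps n/K^2$ of its edges inside $G^\diamond[A_0 \cup A_i, B_0 \cup B_{i'}]$, $e(H''(i,i'))$ is even and equal to $2\lceil e_{G^\diamond}(A',B')/(2K^2)\rceil$ or $2\lfloor \cdot \rfloor$, $d_{H''(i,i')}(v) = (d_{G^\diamond[A',B']}(v) \pm 25\eps n)/K^2$ for $v \in V_0$, and $d_{G'(i,i')}(v) = (2\alpha \pm 25\eps/K^2)n$ for $v \in V_0$ where $G'(i,i') := H(i,i') + H''(i,i')$. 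Since $e_{G^\diamond}(A',B') \ge D - \phi n = 2K^2\alpha n$ (from (v), or rather from $e_{G^\diamond}(A',B') = e_G(A',B') - e_{G_0}(A',B') - (\text{edges removed}) $, which one checks is at least $D-\phi n$ using (iii) and the fact that $e_G(A',B') \ge D$ need \emph{not} hold here — instead one uses that by (iv) $e_{G^\diamond}(A',B')$ is whatever it is, and (v) forces $e_{G^\diamond}(A',B') \ge 2\Delta(G^\diamond[A',B'])$; actually the cleanest route is to observe that (v) directly gives $e_{G^\diamond}(A',B')/2 \le (n/2-1-\phi n)/2$, i.e.\ $e_{G^\diamond}(A',B') \le D - \phi n = 2K^2\alpha n$, so in fact $e(H''(i,i')) \le 2\alpha n + 2$), we get $e(H''(i,i')) = 2\alpha n$ or $2\alpha n \pm O(1)$; I would adjust the rounding (moving $O(1)$ edges between the $H''(i,i')$, as in the proof of Lemma~\ref{lma:move}) so that $e(H''(i,i'))$ is exactly $2\alpha n$, using that $\alpha n \in \mathbb{N}$ and $e_{G^\diamond}(A',B')$ is even and $\ge 2K^2\alpha n$.

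\textbf{Step 2.} Fix $i,i'$ and set $H'' := H''(i,i')$, a bipartite graph with $\Delta(H'') = (d_{G^\diamond[A',B']}(v) \pm 25\eps n)/K^2 \le (e_{G^\diamond}(A',B')/2 + 25\eps n)/K^2 \le (1+o(1))\alpha n$ by (v), and $\Delta(H''[A,B]) \le \eps_0 n$ by (ESch3). Since $e(H'') = 2\alpha n$ and $\Delta(H'') \le 2\alpha n \cdot 2/3$ (using $\alpha \le 1/(4K^2)$ comfortably), König's theorem gives $\chi'(H'') \le \lceil 4\alpha n/3 \rceil$, and Proposition~\ref{prop:matchingdecomposition} decomposes $H''$ into $\lceil 4\alpha n/3 \rceil$ matchings of near-equal size, hence, by splitting and re-pairing odd matchings as in Proposition~\ref{prop:evenmatching}, into $\alpha n$ edge-disjoint even matchings $M_1, \dots, M_{\alpha n}$, each of size $O(1)$ (at most, say, $3e(H'')/(\alpha n) = 6$). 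Each even matching of size $\ge 2$ inside $G^\diamond[A',B']$ is a Hamilton exceptional system candidate; an empty matching is a matching exceptional system candidate. Now I need to extend these into candidates, some of which have $e(F_s) = 2$: from each $M_s$ of size $> 2$ I would peel off pairs of edges to form extra size-$2$ HESCs, and I would merge or pad the empty/size-$0$ matchings using two edges of some other matching (or declare them MESCs) — since $e(H'') = 2\alpha n$ and we want exactly $\alpha n$ candidates, the average size is $2$, and one can arrange that at least $2\lambda n/K^2$ of them have exactly two edges (these will become the $(i,i')$-HES counted in part (b) via the relabeling, and the rest split into localized vs.\ global according to whether they lie inside $G^\diamond[A_0\cup A_i, B_0\cup B_{i'}]$, exactly as in the proof of Lemma~\ref{lma:BESdecomcritical}). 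Because $H''$ is bipartite and every candidate here is a subgraph of $H''[A',B']$, the number of $A'B'$-connections equals the number of edges for each even matching, which is even, so (ESC4) is automatically satisfied — this is much easier than the critical case and needs no analogue of Lemma~\ref{matrix}.

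\textbf{Steps 3--4 and conclusion.} Having decomposed each $H''(i,i')$ into $\gamma n$ localized candidates $F_1(i,i'), \dots, F_{\gamma n}(i,i')$ and $\gamma' n$ candidates $F'_s(i,i')$ (where $\gamma := \alpha - \lambda/K^2$, $\gamma' := \lambda/K^2$) such that all but $\eps' n$ of the $F'_s(i,i')$ have $e(F'_s(i,i'))=2$ or $0$, and using (b$'_5$) from Step~1 for the degree condition, I would verify conditions (i)--(vii) of Lemma~\ref{BEScons} with $G^\diamond$ in the role of $G^*$: (i),(ii) are immediate, (iii) is (b$'_1$), (iv),(v) come from Step~2, (vi) is $d_{G^\diamond}(v) = D - \phi n = 2K^2\alpha n$ for $v \in V_0$ by (i),(iii), and (vii) is (b$'_5$). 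Lemma~\ref{BEScons} then yields the desired decomposition of $G^\diamond$ into $K^2\alpha n = (n/2-1-\phi n)/2$ edge-disjoint exceptional systems, with $(i,i')$-ES count $\gamma n = (n/2-1-(\phi n + 2\lambda))/(2K^2)$ — wait, one should double-check the arithmetic: the lemma gives $\gamma n$ localized $(i,i')$-ES per pair where $\gamma = \alpha - \lambda/K^2$, and $\alpha n = (n/2-1-\phi n)/(2K^2)$, so $\gamma n = (n/2-1-\phi n)/(2K^2) - \lambda n/K^2 = (n/2 - 1 - \phi n - 2\lambda n)/(2K^2)$, matching (b). Finally, each exceptional system produced is a faithful extension of its candidate, hence is either a Hamilton exceptional system with exactly two $A'B'$-edges (when the candidate was a size-$\ge 2$ even matching, which after the splitting in Step~2 has exactly $2$ edges) or a matching exceptional system (when the candidate was empty), giving (a). \textbf{The main obstacle} I anticipate is the bookkeeping in Step~2: arranging the decomposition of $H''$ so that simultaneously (i) every piece is a \emph{valid} exceptional system candidate of size at most $\sqrt{\eps_0}n$, (ii) at least $2\lambda n/K^2$ pieces have exactly two edges so the ``global'' candidates can be separated off, and (iii) all but $\eps' n$ of those have exactly two edges while being localized — this is a purely combinatorial rearrangement but must be done carefully to feed Lemma~\ref{BEScons} correctly; everything else is a routine adaptation of the critical-case proof.
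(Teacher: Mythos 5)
Your overall architecture (reuse Lemma~\ref{lma:movecritical} for Step~1, decompose each $H''(i,i')$ into size-$2$ and empty matchings in Step~2, finish with Lemma~\ref{BEScons}) matches the paper, and your arithmetic for part~(b) is correct. However, there are two genuine gaps in Steps~1--2.

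First, you repeatedly slide between $e_{G^\diamond}(A',B')\ge 2K^2\alpha n$ and $\le 2K^2\alpha n$. In this case only the upper bound holds (this is the regime $e_G(A',B')<D$), and $e_{G^\diamond}(A',B')$ may be as small as $2$. Consequently your claims that $e(H''(i,i'))=2\alpha n$ and that ``the average size is $2$'' are false, and Proposition~\ref{prop:evenmatching} is inapplicable since its hypothesis $e(H)\ge 2\gamma n$ fails. The fix (which is what the paper does) is to accept $e(H''(i,i'))\approx e_{G^\diamond}(A',B')/K^2$, which may be far below $2\alpha n$, decompose $H''(i,i')$ into exactly $e(H''(i,i'))/2$ matchings of size exactly~$2$ via Proposition~\ref{prop:matchingdecomposition} with $m=e(H'')/2$, and then pad with empty matchings (which are valid MESCs) up to $\alpha n$ candidates.

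Second, taking $W_0=\emptyset$ in Lemma~\ref{lma:movecritical} is where the argument actually breaks. To decompose $H''$ into $e(H'')/2$ matchings of size exactly~$2$ you need $\chi'(H'')\le e(H'')/2$, i.e.\ $\Delta(H'')\le e(H'')/2$; a matching of size~$1$ is not a valid HESC since $b(F)=1$ is odd. Hypothesis~(v) gives $\Delta(G^\diamond[A',B'])\le e_{G^\diamond}(A',B')/2$, but after the random split this inequality survives only up to an additive error of order $\eps n/K^2$, which can exceed $e(H''(i,i'))/2$ itself when $e_{G^\diamond}(A',B')$ is small and concentrated on one or two vertices. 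This is exactly what properties (b$_6$) and (b$_7$) of Lemma~\ref{lma:movecritical} are for: the paper distinguishes the case $e_{G^\diamond}(A',B')\le 300\eps n$ (where all $A'B'$-edges are placed into the single graph $H''(1,1)$, so no splitting error arises) from the case $e_{G^\diamond}(A',B')>300\eps n$, where it takes $W_0$ to be the (at most two) vertices with $d_{G^\diamond[A',B']}(v)\ge 3e_{G^\diamond}(A',B')/8$, uses (b$_7$) to control their degrees exactly, and absorbs the error for all other vertices since their degree is bounded away from $e_{G^\diamond}(A',B')/2$. Without some such mechanism your Step~2 cannot be carried out.
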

As in the other two cases, in~\cite{paper1} we will use some of the exceptional systems in (b) to construct the robustly decomposable graph $G^{\rm rob}$.
Unlike the critical case with $e_G(A',B') \ge D$, there is no need to `track' the degrees of the vertices $w_i$ of high degree in $G[A',B']$
this time (this is due to the very special structure of the exceptional systems produced in this case). 

\begin{proof}
Let $\eps'$ be a new constant such that $\eps \ll \eps' \ll \lambda, 1/K$ and
set 
\begin{equation} \label{alphaD}
2\alpha n:= \frac{n/2 - 1 - \phi n }{K^2}.
\end{equation}
Similarly as in the proof of Lemma~\ref{lma:BESdecomcritical} we have
\begin{equation}\label{alphahier4}
\eps \ll \eps'  \ll \lambda, 1/K, \alpha \ll 1.
\end{equation}
We claim that $G^{\diamond}$ can be decomposed into edge-disjoint spanning subgraphs $H(i,i')$ and $H''(i,i')$ (for all $1\le i,i' \le K$)
which satisfy the following properties, where $G'(i,i'):=H(i,i')+H''(i,i')$:
\begin{itemize}
\item[\rm (b$_1'$)] Each $H(i,i')$ contains only $A_0A_i$-edges and $B_0B_{i'}$-edges.
 \item[\rm (b$_2'$)] $H''(i,i')\subseteq G^{\diamond}[A',B']$.
Moreover, all but at most  $\eps' n$ edges of $H''(i,i')$ lie in $G^{\diamond}[A_0 \cup A_i, B_0 \cup B_{i'}]$.
\item[\rm (b$_3'$)] $e ( H''(i,i') )$ is even and $e(H''(i,i')) \le 2 \alpha n $.
\item[\rm (b$_4'$)] $\Delta(H''(i,i')) \le e ( H''(i,i') )/2$.
\item[\rm  (b$_5'$)] $d_{G'(i,i')}(v )  =  (2\alpha   \pm  \eps' )n$ for all $v\in V_0$.
\end{itemize}
To see this, let us first consider the case when $ e_{G^{\diamond}}(A',B') \le 300 \eps n $.
Apply Lemma~\ref{lma:randomslice} to $G^{\diamond}$ in order to obtain a decomposition
of $G^{\diamond}$ into edge-disjoint spanning subgraphs $H(i,i')$ and $H'(i,i')$ (for all $1\le i,i' \le K$) which satisfy
Lemma~\ref{lma:randomslice}(a$_1$)--(a$_5$). Set $H''(1,1) := \bigcup_{i,i' \le K} H'(i,i') = G^{\diamond}[A',B']$ and
$H''(i,i') := \emptyset$ for all other pairs $1 \le i,i' \le K$.%
\COMMENT{AL: added $1 \le$}
 Then (b$_1'$) follows from (a$_1$).
(b$_2'$) follows from our definition of the $H''(i,i')$ and our assumption that $ e_{G^{\diamond}}(A',B') \le 300 \eps n< \eps' n <\alpha n$.
Together with Lemma~\ref{lma:PBESdecom}(iv) this also implies (b$_3'$). (b$_4'$) follows from Lemma~\ref{lma:PBESdecom}(v).
Note that by Lemma~\ref{lma:PBESdecom}(i) and~(iii),
every $v\in V_0$ satisfies $d_{G^\diamond}(v)=n/2-1-\phi n=2K^2\alpha n$.
So, writing $G(i,i'):=H(i,i')+H'(i,i')$, (a$_5$) implies that
$$
d_{G'(i,i')}(v )=d_{G(i,i')}(v)\pm 300 \eps n =(2\alpha \pm 4\eps/K^2)n\pm 300 \eps n=(2\alpha   \pm  \eps' )n.
$$
Thus (b$_5'$) holds too.

So let us next consider the case when $ e_{G^{\diamond}}(A',B') > 300 \eps n $.
Let $W_0$ be the set of all those vertices $v \in V(G)$ for which $d_{G^{\diamond}[A',B']}(v) \ge 3e_{G^{\diamond}}(A',B')/8$.
Then clearly $|W_0| \le 2$. Moreover, each $v \in V(G) \setminus W_0$ satisfies
\begin{align}\label{eq:degnonW0}
d_{G^{\diamond}[A',B']}(v) + 26 \eps n < 3e_{G^{\diamond}}(A',B')/8 + e_{G^{\diamond}}(A',B')/8 = e_{G^{\diamond}}(A',B')/2.
\end{align}
Recall from Lemma~\ref{lma:PBESdecom}(v) that each $w\in W_0$ satisfies
$d_{G^{\diamond}[A',B']}(w)\le e_{G^{\diamond}}(A',B')/2$. So we can
apply Lemma~\ref{lma:movecritical} to $G^{\diamond}$ in order to obtain a decomposition
of $G^{\diamond}$ into edge-disjoint spanning subgraphs $H(i,i')$ and $H''(i,i')$ (for all $1\le i,i' \le K$) which satisfy
Lemma~\ref{lma:movecritical}(b$_1$)--(b$_7$). Then (b$_1$) and (b$_2$) imply (b$'_1$) and (b$'_2$).
(b$_3'$) follows from (b$_3$),~(\ref{alphaD}) and Lemma~\ref{lma:PBESdecom}(v). Note that (b$_3$), (b$_4$) and~(\ref{eq:degnonW0})
together imply that
\begin{equation}\label{eq:degH''v}
d_{H''(i,i')}(v) \le \frac{e_{G^{\diamond}}(A',B')/2-\eps n}{K^2}\le \frac{e ( H''(i,i') )}{2}
\end{equation}
for all $v \in V_0 \setminus W_0$. Note that each $v\in A\cup B$ satisfies $d_{H''(i,i')}(v) \le d_{G^\diamond[A',B']}(v)\le \eps_0 n$ by
Lemma~\ref{lma:PBESdecom}(iv) and (ESch3). Together with the fact that $e ( H''(i,i'))\ge 2\lfloor 300\eps n/(2K^2)\rfloor \ge 2\eps_0 n$
by~(b$_3$), this implies that~(\ref{eq:degH''v}) also holds for all $v\in A\cup B$. 
 Together with (b$_7$) this implies (b$'_4$).
(b$'_5$) follows from (b$_5$) and the fact that by Lemma~\ref{lma:PBESdecom}(i) and~(iii)
every $v\in V_0$ satisfies $d_{G^\diamond}(v)=n/2-1-\phi n=2K^2\alpha n$.
So (b$'_1$)--(b$'_5$) hold in all cases.

We now decompose the localized subgraphs $H''(i,i')$ into exceptional system candidates. For this, 
fix $i,i' \le K$ and write $H''$ for $H''(i,i')$. By (b$_4'$) we have $\Delta(H'') \le e ( H'')/2$ and so $\chi'(H'') \le e ( H'')/2$.
Apply Proposition~\ref{prop:matchingdecomposition} with $e ( H'')/2$ playing the role of $m$ to decompose $H''$ into $e ( H'')/2$ edge-disjoint
matchings, each of size~$2$. Note that $\alpha n-e ( H'')/2\ge 0$ by (b$_3'$). So we can add some empty matchings to obtain a
decomposition of $H''$ into $\alpha n$ edge-disjoint $M_1, \dots ,M_{\alpha n}$ such that each $M_s$ is either empty or has size~2.
Let
\begin{align*}
\gamma & := \alpha - \frac{\lambda}{K^2} & 
& \text{and} &
\gamma' & : = \frac{\lambda}{K^2}.
\end{align*} 
Recall from (b$_2'$) that all but at most $\eps' n \le \gamma' n$ edges of $H''$ lie in $G^{\diamond}[A_0 \cup A_i, B_0 \cup B_{i'}]$.
Hence by relabeling if necessary, we may assume that  $M_s \subseteq G^{\diamond}[A_0 \cup A_i ,  B_0 \cup B_{i'}]$ for every $s \le \gamma n$.
So by setting $F_s(i,i') := M_s$ for all $s \le \gamma n$ and $F_s'(i,i') := M_{ \gamma n + s }$ for all $s \le \gamma' n$
we obtain a decomposition of $H''$ into edge-disjoint exceptional system candidates $F_1(i,i'), \dots , F_{\gamma n}(i,i')$
and $ F'_1(i,i'), \dots , F'_{\gamma ' n}(i,i')$ such that
the following properties hold:
\begin{itemize}
	\item[(a$'$)] $F_s(i,i')$ is an $(i,i')$-ESC for every $s \le \gamma n$.
	\item[(b$'$)] Each $F_s(i,i')$ is either a matching exceptional system candidate with $e(F_s(i,i')) = 0$ or a
Hamilton exceptional system candidate with $e(F_s(i,i')) = 2$. The analogue holds for each $F'_{s'}(i,i')$.
\end{itemize}
Our next aim is to apply Lemma~\ref{BEScons} with $G^\diamond$ playing the role of $G^*$, to extend the above 
exceptional system candidates into exceptional systems.
Clearly conditions (i) and (ii) of Lemma~\ref{BEScons} hold. (iii) follows from (b$'_1$). (iv) and (v) follow from (a$'$) and (b$'$).
(vi) follows from Lemma~\ref{lma:PBESdecom}(i),(iii). Finally, (vii) follows from (b$'_5$) since $G'(i,i')$ plays the role of $G^*(i,i')$ in Lemma~\ref{BEScons}.
Thus we can indeed apply Lemma~\ref{BEScons} to obtain a decomposition of $G^\diamond$ into $K^2\alpha n$
edge-disjoint exceptional systems $J_1(i,i'),\dots,J_{\gamma n}(i,i')$ and $J'_1(i,i'),\dots,J'_{\gamma' n}(i,i')$,
where $1\le i,i'\le K$, such that $J_s(i,i')$ is an $(i,i')$-ES which is a faithful extension of  $F_s(i,i')$ for all $s \le \gamma n$
and $J'_s(i,i')$ is a faithful extension of $F'_s(i,i')$ for all $s \le \gamma' n$.
Then the set $\mathcal{J}$ of all these exceptional systems is as required in Lemma~\ref{lma:PBESdecom}.
\end{proof}


\section{Acknowledgements}

We are grateful to B\'ela Csaba and Andrew Treglown for helpful discussions.

\medskip

{\footnotesize \obeylines \parindent=0pt

Daniela K\"{u}hn, Allan Lo, Deryk Osthus 
School of Mathematics
University of Birmingham
Edgbaston
Birmingham
B15 2TT
UK
}
\begin{flushleft}
{\it{E-mail addresses}:
\tt{\{d.kuhn,s.a.lo,d.osthus\}@bham.ac.uk}}
\end{flushleft}

\end{document}